\numberwithin{equation}{section}
\newtheorem{theorem}{Theorem}[section]
\newtheorem{definition}{Definition}[section]
\newtheorem{lemma}{Lemma}[section]
\newtheorem{remark}{Remark}[section]
\newlength{\onesixth}
\newtheoremstyle{noparens}%
  {}{}%
  {\itshape}{}%
  {\bfseries}{.}%
  { }%
  {\thmname{#1}\thmnumber{ #2}\mdseries\thmnote{ #3}}
\theoremstyle{noparens}
\newtheorem{lemmaNoParens}[lemma]{Lemma}
\makeatletter \@addtoreset{equation}{section} \makeatother
\begin{document}

\title{Local classical solutions to Navier-Stokes equations with  degenerate viscosities and vacuum}
% allowing Far Field Vacuum}

%\author{YUE CAO, YACHUN LI, AND SHAOJUN YU}
%\author{Yue Cao}
%\address[Y. Cao]{School of Mathematics, East China University of Science and Technology, Shanghai, 200237, China.}
%\email{\tt cao\_yue12@ecust.edu.cn}
\author{Yachun li}
\address[Y. Li]{School of Mathematical Sciences, MOE-LSC, and SHL-MAC, Shanghai Jiao Tong University, Shanghai 200240, P. R. China} \email{\tt ycli@sjtu.edu.cn}
\author{SHAOJUN YU}
\address[S. Yu]{School of Mathematical Sciences, Shanghai Jiao Tong University, Shanghai 200240, P. R. China}\email{\tt edwardsmith123@sjtu.edu.cn}
\maketitle
\begin{abstract}
We consider the 3D  isentropic compressible Navier-Stokes equations
with degenerate viscousities and vacuum. The degenerate viscosities $\mu(\rho)$ and $\lambda(\rho)$ are proportional to some power of density, while the powers of density in $\mu(\rho)$ and $\lambda(\rho)$ are different(i.e., $\delta_1\neq \delta_2$). The local
well-posedness of classical solution is established by introducing a ``quasi-symmetric hyperbolic''--``degenerate elliptic'' coupled structure to control the behavior of the velocity of the fluid near the vacuum and  give some uniform estimates.  In particular, the initial data allows vacuum in an open set and we do not need any initial compatibility conditions.
\end{abstract}
 
\section{introduction}
We consider the following three-dimensional isentropic Navier-Stokes equations:
\begin{equation}\label{1}
\left\{\begin{split}
&\rho_t+\operatorname{div}(\rho u)=0, \\
&(\rho u)_t+\operatorname{div}(\rho u \otimes u)+\nabla P=\operatorname{div} \mathbb{T},
\end{split}\right.
\end{equation}
with initial data
\begin{equation}\label{2}
(\rho,u)|_{t=0}=(\rho_0,u_0)(x), \quad x\in \mathbb{R}^3,
\end{equation}
and the far field behavior
\begin{equation}\label{3}
(\rho, u) \rightarrow(0,0) \quad \text { as } \quad|x| \rightarrow+\infty, \quad t \geq 0.
\end{equation}

In system \eqref{1}, $x \in \mathbb{R}^3, t \geq 0$ are space and time variables, respectively. $\rho(t,x)$ is the density, and $u(t,x) $ is the velocity of the fluid. Pressure $P$ satisfies
\begin{equation}
P(\rho)=A\rho^\gamma,\quad \gamma>1,
\end{equation}
where $A>0$ is a constant and $\gamma$ is the adiabatic exponent.

 $\mathbb{T}$ denotes the viscous stress tensor with the form
\begin{equation}
  \mathbb{T}=\mu(\rho)\left(\nabla u+(\nabla u)^{\top}\right)+\lambda(\rho) \operatorname{div} u \mathbb{I}_3,
\end{equation}
where $\mathbb{I}_3$ is the $3 \times 3$ identity matrix,
\begin{equation}\label{108}
\mu(\rho)= \alpha \rho^{\delta_1}, \quad \lambda(\rho)= \beta \rho^{\delta_2},
\end{equation}
$\mu(\rho)$ is the shear viscosity coefficient, $\lambda(\rho)+\frac{2}{3} \mu(\rho)$ is the bulk viscosity coefficient, $\alpha>0 $ and $\beta$ are both constants.
% satisfying
%\begin{equation}
%\alpha>0, \quad 2 \mu(\rho)+3\lambda(\rho)  \geq 0.
%\end{equation}

For the constant viscous fluid (i.e., $\delta_1=\delta_2=0$ in \eqref{108}), there is a lot of literature on the well-posedness of classical solutions. When $\inf_{x} \rho_0(x)>0$, the local well-posedness of  classical solutions  follows from the standard symmetric hyperbolic-parabolic structure satisfying the well-known Kawashima's condition (cf. \cite{Nash1962Le,Kawa1983Sys,   Serrin1959On}), which has been extended to a global one by Matsumura-Nishida \cite{Matsu1980The} near the nonvacuum equilibrium.
 In $\mathbb{R}$, the global well-posedness of strong solutions with arbitrarily large data in some bounded domains has been proven by Kazhikhov-Shelukhin \cite{Kazhikhov1977Unique}, and later, Kawashima-Nishida \cite{Kawashima1981Global} extended this theory to unbounded domains. 
When $\inf_{x} \rho_0(x)=0$, 
 %which occurs when some physical requirements are imposed, such as finite total initial mass and energy in the whole space. 
 the first main issue is the degeneracy of the time evolution operator, which makes it difficult to catch the behavior of the velocity field near the vacuum, for this case, the local-in-time well-posedness of strong solutions with vacuum was firstly solved by Cho-Choe-Kim\cite{Cho2004Unique} and Cho-Kim\cite{Cho2006On} in $\mathbb{R}^3$, where they introduced an initial compatibility condition to compensate the lack of a positive lower bound of density.
%\begin{equation*}
%\operatorname{div} \mathbb{T}\left(u_0\right)+\nabla P_0=\rho_0 f, \text { for some } f \in D^1 \text { and } \sqrt{\rho}_0 f \in L^2,
%\end{equation*}
%and obtained the uniform a priori estimates with respect to the lower bound of density. 
Later, Huang-Li-Xin \cite{Huang2012Glo} extended the local existence to a global one under some initial smallness assumption in $\mathbb{R}^3$. Jiu-Li-Ye \cite{Jiu2014Glo} proved the global existence of classical solution with arbitrarily large data and vacuum in $\mathbb{R}$.

For the degenerate viscous flow (i.e., $\delta_1> 0,\delta_2>0$ in \eqref{108}), they have received extensive attentions in recent years, especially for the case with
%When $\inf _x \rho_0(x)>0$, some important progress on the global well-posedness of classical solutions to the Cauchy problem of \eqref{1} has been obtained, which include Burtea-Haspot \cite{Burtea2020New}, Constantin et al. \cite{Con2020Com}, Haspot \cite{Haspot2018Exi}, Kang-Vasseur \cite{Kang2020Glo}, Mellet-Vasseur \cite{Mellet2007Exi} for 1-D flow with arbitrarily large data, and Sundbye \cite{Sun1998Glo} and Wang-Xu \cite{Wang2005The} for two-dimensional (2-D) flow with initial data close to a nonvacuum equilibrium in some Sobolev spaces.
vacuum, where the well-posedness of classical solutions become more challenging due to the degenerate viscosity. In fact, the high order regularity estimates of the velocity in \cite{Cho2004Unique,Huang2012Glo,Jiu2014Glo} ($\delta=0$) strongly rely on the uniform ellipticity of the Lam\'e operator.
% $L$ defined by
%\begin{equation*}
%L u=-\alpha \Delta u-(\alpha+\beta) \nabla \operatorname{div} u,
%\end{equation*}
%and $\left(L u=-\alpha u_{x x}\right.$ in $\left.\mathbb{R}\right)$.  
While for $\delta>0$, $\mu(\rho)$ vanish as the density function connects to vacuum continuously, thus it is difficult to adapt the approach of the constant viscosity case. A remarkable discovery of a new mathematical entropy function was made by Bresch-Desjardins \cite{Bre2003Exi}
for the viscosity satisfying some mathematical relation, 
% for $\lambda(\rho)$ and $\mu(\rho)$ satisfying the relation, 
 which provides additional regularity on some derivative of the density.
%\begin{equation*}
%\lambda(\rho)=2\left(\mu^{\prime}(\rho) \rho-\mu(\rho)\right)
%\end{equation*}
%which offers an estimate $\mu^{\prime}(\rho) \nabla \sqrt{\rho} \in L^{\infty}\left([0, T] ; L^2\left(\mathbb{R}^d\right)\right)$ provided that $\mu^{\prime}\left(\rho_0\right) \nabla \sqrt{\rho_0} \in$ $L^2\left(\mathbb{R}^d\right)$ for any $d \geq 1$. 
This observation was applied widely in proving the global existence of weak solutions with vacuum for \eqref{1} and some related models; see Bresch-Desjardins \cite{Bre2003Exi}, Bresch-Vasseur-Yu \cite{Bresch2022Glo}, Jiu-Xin \cite{Jiu2008The}, $\mathrm{Li}$-Xin \cite{Li2015Glo}, Mellet-Vasseur \cite{Mellet2007On}, Vasseur-Yu \cite{Vasseur2016Exi}, and so on. 
%However, the regularities and uniqueness of such weak solutions with vacuum remain open.
%Notice that, for the cases $\delta \in(0, \infty)$, if $\rho>0,\eqref{1}_2$ can be formally rewritten as
%\begin{equation}\label{101}
%u_t+u \cdot \nabla u+\frac{A \gamma}{\gamma-1} \nabla \rho^{\gamma-1}+\rho^{\delta-1} L u=\psi \cdot Q(u),
%\end{equation}
%where the quantities $\psi$ and $Q(u)$ are given by
%\begin{equation}\label{102}
%\begin{aligned}
%&\psi  \triangleq \nabla \ln \rho \quad \text { when } \quad \delta=1 ; \\
%& \psi  \triangleq \frac{\delta}{\delta-1} \nabla \rho^{\delta-1} \quad \text { when } \quad \delta \in(0,1) \cup(1, \infty) ; \\
%& Q(u)  \triangleq \alpha\left(\nabla u+(\nabla u)^{\top}\right)+\beta \operatorname{div} u \mathbb{I}_d .
%\end{aligned}
%\end{equation}
%When $\delta=1$, from \eqref{101}-\eqref{102}, the degeneracy of time evolution and elliptic structure in momentum equations caused by the vacuum has been transferred to the possible singularity of the term $\nabla \ln \rho$, which actually can be controlled by a symmetric hyperbolic system with a source term $\nabla \operatorname{div} u$, and then 
%For the case of classical solutions,  cases $\delta \in(0, \infty)$
Moreover, 
when  $\delta=1$, Li-Pan-Zhu \cite{Li2017On} obtained the local existence of 2-D classical solution with far field vacuum, which also applies to the 2-D shallow water equations. 
When $1 < \delta \leq \min \left\{3, \frac{\gamma+1}{2}\right\}$, by making full use of the symmetrical structure of the hyperbolic operator and the weak smoothing effect of the elliptic operator, Li-Pan-Zhu \cite{Li2019On} established the local existence of 3-D classical solutions with arbitrarily large data and vacuum, see also Geng-Li-Zhu \cite{Geng2019Vanishing} for more related result, and Xin-Zhu \cite{Xin2021Glo} for the global existence of classical solution under some initial smallness assumptions in homogeneous Sobolev space. When $0<\delta<1$, Xin-Zhu \cite{Xin2021Well} obtained the local existence of 3-D local classical solution with far field vacuum, Cao-Li-Zhu\cite{Cao2022Glo} proved the global existence of 1-D classical solution with large initial data.   Some other interesting results and discussions can also be seen in Chen-Chen-Zhu\cite{Chen2022Vanishing}, Germain-Lefloch \cite{Germain2016Finite}, Guo-Li-Xin \cite{Guo2012Lagrange}, Lions \cite{Lions1998Math}, Luo-Xin-Zeng \cite{Luo2016Non}, Yang-Zhao \cite{Yang2002Vacuum}, Yang-Zhu \cite{Yang2002Com}, and so on.

It should be noted that the above literature on the well-posedness of the  3D Navier-Stokes equations with degenerate viscosities are all under the assumption that the powers of the viscosity coefficients  are the same, i.e.,($\delta_1=\delta_2$). While for Navier-Stokes equations with different powers of viscosities, there are few results, even for the local existence to 3D Navier-Stokes equations. Zhu \cite{Zhu2015Exi} established the local existence of classical solutions to 3D Navier-Stokes equations with $\mu(\rho)=\rho,$ and $\lambda(\rho)=\rho E(\rho)$, where $E(\rho)\in C^2(\bar{\mathbb{R}}^{+})$.  Luo-Zhou \cite{Luo2019On} obtained the local existence of classical solutions under the condition that $\mu(\rho)=\alpha\rho^{\delta_1},\lambda(\rho)=\beta\rho^{\delta_2}(0<\delta_1<1, \delta_1<\frac{\delta_1+1}{2}\leq \delta_2\leq 2\gamma-1)$
 with far field vacuum and compatibility conditions. In this paper, we established the local existence  and uniqueness of classical solutions to 3D Navier-Stokes equations under the condition that viscosities are the power law of density with different powers, satisfying that $\mu(\rho)=\alpha\rho^{\delta_1}, \lambda(\rho)=\beta \rho^{\delta_2}(\delta_1\neq \delta_2,\delta_1,\delta_2>1)$.  And the initial data can contain vacuum in an open set and without compatibility conditions.

Throughout this paper, we adopt the following simplified notations:
\begin{equation*}
\begin{aligned}
&|f|_p  =\|f\|_{L^p\left(\mathbb{R}^3\right)},\quad \|f\|_s=\|f\|_{H^s\left(\mathbb{R}^3\right)}, \quad D^{k, r} =\left\{f \in L_{l o c}^1\left(\mathbb{R}^3\right):\left|\nabla^k f\right|_r<+\infty\right\}, \\
& D^k =D^{k, 2},\quad |f|_{D^{k, r}}=\|f\|_{D^{k, r}\left(\mathbb{R}\right)},\quad |f|_{D^1}=\|f\|_{D^1\left(\mathbb{R}^3\right)},
 \quad \int f=\int_{\mathbb{R}^3} f \mathrm{~d} x,\\
&\|(f, g)\|_X=\|f\|_X+\|g\|_X, \quad \|f\|_{X\cap Y}=\|f\|_X+\|f\|_Y.
\end{aligned}
\end{equation*}

We first give the definition of regular solution.
\begin{definition}\label{87}
  Let $T>0$ be a finite constant. A solution $(\rho,u)$ to the Cauchy problem \eqref{1}-\eqref{3} is called a regular solution in $[0,T]\times \mathbb{R}^3$ if $(\rho,u)$ satisfies this problem in the sense of distribution  and:
  \begin{itemize}
\item[(1)]  $\rho \geq 0, \quad \rho^{\frac{\delta_1-1}{2}} \in C\left([0, T] ; H^3\right), \quad \rho^{\frac{\delta_2-1}{2}} \in C\left([0, T] ; H^3\right),\quad \rho^{\frac{\gamma-1}{2}} \in C\left([0, T] ; H^3\right);$
\item[(2)] $ u \in C([0, T] ; H^{s^{\prime}}) \cap L^{\infty}\left([0, T] ; H^3\right),  \quad \rho^{\frac{\delta_1-1}{2}}  \nabla^4 u \in L^2\left([0, T]; L^2\right); $
\item[(3)] $ u_t+u\cdot \nabla u=0$ {\rm as} $\rho(t, x)=0,$
\end{itemize}
where $s^{\prime} \in[2,3)$ is a constant.
\end{definition}

\begin{theorem}\label{38} {\rm (Local existence)} Let parameters $(\gamma, \delta_1,\delta_2,\alpha,\beta)$ satisfy
\begin{equation}\label{78}
\delta_2>\delta_1>1, \delta_2\geq\frac{5}{2}\delta_1-\frac{3}{2}, \gamma>1, \alpha>0, \min\{\delta_1, \gamma\}\leq 3.
\end{equation}
%(another form: $1<\delta_1<\frac{2}{5}\delta_2+\frac{3}{5}<\delta_2$).
 If the initial data $\left(\rho_0, u_0\right)$ satisfy 
\begin{equation}\label{66}
\begin{split}
&\text{$(A_1)$} \quad \rho_0 \geq 0, \quad\left(\rho_0^{\frac{\gamma-1}{2}}, \rho_0^{\frac{\delta_1-1}{2}},  u_0\right) \in H^3,\\
&\text{$(A_2)$} \quad \rho_0^{\delta_2-\delta_1}\leq \frac{-\alpha}{3\beta}, \quad \text{when} \quad  \beta<0,
\end{split}
\end{equation}
then there exists a time $T_*>0$ and a unique regular solution $(\rho, u)$ in $\left[0, T_*\right] \times \mathbb{R}^3$ to the Cauchy problem \eqref{1}-\eqref{3} satisfying: 
\begin{equation*}
\begin{aligned}
\sup _{0 \leq t \leq T_*} & \left(\|\rho^{\frac{\gamma-1}{2}}\|_3^2+\|\rho^{\frac{\delta_1-1}{2}}\|_3^2 +\|u\|_3^2\right)(t)  + \int_0^t |\rho^{\frac{\delta_1-1}{2}}  \nabla^4 u|_2^2 \mathrm{d} s \leq C^0,
\end{aligned}
\end{equation*}
for arbitrary constant $s^{\prime} \in[2,3)$ and positive constant $C^0=C^0( \gamma, \delta_1, \delta_2, \rho_0, u_0)$. Actually, $(\rho, u)$ satisfies the Cauchy problem \eqref{1}-\eqref{3} classically in positive time $\left(0, T_*\right]$.
\end{theorem}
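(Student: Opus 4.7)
The plan is to follow the framework of Li--Pan--Zhu \cite{Li2019On} for degenerate-viscosity Navier--Stokes, suitably modified to handle the mismatch $\delta_1\neq\delta_2$ and the absence of compatibility conditions. The first step is to reformulate \eqref{1} in terms of the auxiliary unknowns
$$\phi=\rho^{(\gamma-1)/2},\quad \psi_1=\rho^{(\delta_1-1)/2},\quad \psi_2=\rho^{(\delta_2-1)/2},$$
so that $\mu(\rho)/\rho=\alpha\psi_1^2$, $\lambda(\rho)/\rho=\beta\psi_2^2$, and $\nabla P/\rho=\frac{2A\gamma}{\gamma-1}\phi\nabla\phi$. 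Because $\delta_1,\delta_2>1$, the a priori singular drifts $\nabla\mu(\rho)/\rho$ and $\nabla\lambda(\rho)/\rho$ rewrite as $\frac{2\alpha}{\delta_1-1}\psi_1\nabla\psi_1$ and $\frac{2\beta}{\delta_2-1}\psi_2\nabla\psi_2$. Dividing the momentum equation by $\rho$ then yields a block-structured equation: a quasi-symmetric hyperbolic part in $(\phi,u)$ together with the degenerate elliptic operator $\alpha\psi_1^2(\Delta u+\nabla\dd u)+\beta\psi_2^2\nabla\dd u$ whose coefficients vanish at vacuum. Each of $\phi,\psi_1,\psi_2$ then satisfies a first-order transport equation driven by $u$.

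I would next build approximate solutions via a Picard iteration on the linearized system: regularize the initial density as $\rho_0+\eta$ and mollify $u_0$ so that the standard Cho--Kim compatibility holds at level $\eta>0$, then, for each iterate $v$, solve the three transport equations for $(\phi,\psi_1,\psi_2)$ and the resulting uniformly parabolic equation for $u$. The crux is to close \emph{$\eta$-independent} a priori estimates. For $(\phi,\psi_1,\psi_2)$, applying $D^k$ ($k\le 3$) to the transport equations and using $|v|_{H^3}$ gives $H^3$ control. For $u$ I split the estimate into two parts: the symmetric-hyperbolic block of the momentum equation, paired with $D^ku$, controls $\sup_t\|u\|_3$ up to terms absorbed by Gronwall; meanwhile, viewing the momentum equation pointwise in time as a degenerate elliptic system and applying $D^2$ to it yields the weighted integral bound $\int_0^T|\psi_1\nabla^4 u|_2^2\,ds$. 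Assumption $(A_2)$ is exactly what keeps $\alpha+3\beta\rho^{\delta_2-\delta_1}\ge\alpha/3>0$, hence the Lam\'e symbol coercive after renormalizing by $\psi_1^{-2}$.

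With the uniform bounds in hand I pass to the limit $\eta\to 0$. Strong convergence for $\phi,\psi_1,\psi_2$ in lower-order norms follows from Aubin--Lions applied to their transport equations, and the weighted estimate $\psi_1\nabla^4u\in L^2_tL^2_x$ together with $u\in L^\infty_tH^3$ is sufficient to pass to the limit in every nonlinear viscous term. Uniqueness comes from a weighted $L^2$ energy estimate on the difference of two solutions, using the continuity equation to absorb the derivative loss caused by the degenerate coefficient. Time continuity $u\in C([0,T_*];H^{s'})$ for $s'\in[2,3)$ then follows by interpolation between $L^\infty_tH^3$ and a weak-strong continuity argument.

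The principal obstacle is the coupling of the two distinct degeneracies. The natural energy weight from the $\mu$-term is $\psi_1^2$, yet the $\lambda$-term produces $\psi_2^2\nabla\dd u=\psi_1^2\rho^{\delta_2-\delta_1}\nabla\dd u$; this is harmless when $\beta\ge 0$ (using $\psi_1\in L^\infty$), but requires the quantitative bound in $(A_2)$ when $\beta<0$. More subtly, when $\psi_2^2$ is differentiated three or four times inside the higher-order momentum estimates, the resulting terms must be reabsorbed by norms controlled through the $\psi_1$-equation; this is precisely where the technical condition $\delta_2\ge\frac{5}{2}\delta_1-\frac{3}{2}$ enters, converting powers $\psi_2=\psi_1^{(\delta_2-1)/(\delta_1-1)}$ into quantities whose three derivatives are governed by $\|\psi_1\|_{H^3}$ via Sobolev embedding. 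The side condition $\min\{\delta_1,\gamma\}\le 3$ similarly ensures that $\phi$ and $\psi_1$ remain in $H^3$ under chain-rule differentiation. Without compatibility, pointwise-in-time $H^4$ control of $u$ is unavailable and is replaced throughout by the $L^2$-in-time weighted estimate, so the whole scheme is engineered to close with this weaker regularity at the initial time.
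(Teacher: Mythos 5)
Your overall architecture matches the paper's: reformulate via powers of the density, linearize, prove uniform estimates, pass to the limit in the regularization, run a Picard iteration, and prove uniqueness by a weighted $L^2$ estimate on differences. However, there are three genuine gaps. First, your regularization (replace $\rho_0$ by $\rho_0+\eta$ and mollify $u_0$ to restore a Cho--Kim compatibility condition) is not the paper's and, as stated, breaks the functional framework: with far-field vacuum, $(\rho_0+\eta)^{(\delta_1-1)/2}\to\eta^{(\delta_1-1)/2}\neq0$ as $|x|\to\infty$, so the approximate $\psi_1$ is no longer in $H^3(\mathbb{R}^3)$ and all the transport and energy estimates would have to be rewritten for the perturbation about the constant state. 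The paper instead leaves the data untouched and adds $\eta^2$ only to the coefficient of the elliptic operator, i.e.\ replaces $\varphi^2$ by $\varphi^2+\eta^2$ in the momentum block; this preserves the $H^3$ setting and requires no compatibility condition at any stage, which is one of the points of the theorem.

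Second, your route to the key bound $\int_0^T|\psi_1\nabla^4u|_2^2\,ds$ — ``viewing the momentum equation pointwise in time as a degenerate elliptic system and applying $D^2$'' — does not work: elliptic regularity for the operator $\psi_1^2(\alpha\Delta+(\alpha+\beta\psi_1^{2m})\nabla\dd)$ would at best yield control of $\psi_1^2\nabla^4u$, and recovering the weight $\psi_1$ (first power) requires dividing by $\psi_1$, which is exactly what vacuum forbids; there is no applicable weighted elliptic theory here. The correct (and the paper's) mechanism is that this term is the \emph{dissipation} produced when $\partial_x^\zeta$ with $|\zeta|=3$ is applied to the momentum equation and the result is tested against $\partial_x^\zeta u$: the coefficient $\varphi^2+\eta^2$ integrates by parts onto both factors and the square root of the coefficient appears automatically. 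Relatedly, the hyperbolic and elliptic blocks cannot be estimated separately as you suggest, since the commutators generated by the degenerate elliptic terms must be absorbed by precisely this dissipation. Third, the theorem also asserts that $(\rho,u)$ is \emph{classical} on $(0,T_*]$; your proposal omits this entirely. It requires additional $t$-weighted estimates ($t^{1/2}u_t\in L^\infty([0,T_*];H^2)$, continuity of $\dd\mathbb{T}$ via $\dd\mathbb{T}=\rho\mathbb{N}$, and a Boldrini-type lemma to remove the initial layer), and the hypothesis $\min\{\delta_1,\gamma\}\leq3$ is used exactly there — to guarantee $\rho=\varphi^{2/(\delta_1-1)}\in C^1$ since $2/(\delta_1-1)\geq1$ — not, as you write, to keep $\phi$ and $\psi_1$ in $H^3$. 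A small further slip: $(A_2)$ gives $\alpha+3\beta\rho_0^{\delta_2-\delta_1}\geq0$ at $t=0$, and one must use time-continuity to propagate $\alpha+\beta\varphi^{2m}\geq\alpha/2$ on a short interval; the bound $\geq\alpha/3$ does not follow directly from $(A_2)$.
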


\begin{remark}
  The condition $\min\{\delta_1, \gamma\}\leq 3$ is only used to prove that the regular solution we obtained in Theorem \ref{38} is a classical one. Details can be seen in \S \ref{89}.
\end{remark}

\begin{remark}
  The condition $\text{$(A_2)$} $ is to ensure that the system meets the physical requirements that $2\mu(\rho)+3\lambda(\rho)\geq 0$ for a sufficiently small time $T_{*}$.
\end{remark}

We now comment on the analysis of this paper. By introducing two new quantities, equations \eqref{1}-\eqref{3} can be rewritten into a new system that consists of a transport equation for $\varphi$, and a ``quasi-symmetric hyperbolic"-``degenerate elliptic" coupled system with some special lower order source terms, which can control the behavior of the velocity of the fluid near the vacuum and  give some uniform estimates. The reason for setting $\delta_1<\delta_2$ is to ensure that $\alpha+\beta\varphi^{\frac{2(\delta_2-\delta_1)}{\delta_1-1}}>0.$ Since $\beta$ might be negative, then as $\varphi\rightarrow 0,$ if $\delta_1>\delta_2$, $\varphi^{\frac{2(\delta_2-\delta_1)}{\delta_1-1}}\rightarrow \infty.$ This would  prevent us from obtaining \eqref{92}, which is crucial for ensuring the elliptic structure necessary for subsequent energy estimates. 

Besides, we use the artificial viscosity coefficients $\eta^2>0$ in momentum equations, and the local well-posedness of linearized problem is established. Via passing to the limit as $\eta\rightarrow 0$, we obtain the solution of the linearized problem \eqref{20}, which allows that the elliptic operator appearing in the reformulated momentum equations is degenerate (Section \ref{83} ). Based on the uniform analysis for the linearized problem, we prove the local-in-time well-posedness of the nonlinear reformulated problem through the Picard iteration approach (Section \ref{109}).
%Due to the presence of vacuum, the elliptic part of equation is degenerate, which leading to difficulties. To solve this problem, 

We would like to clarify why the theorem imposes the restriction \(\delta_2 \geq \frac{5}{2} \delta_1 - \frac{3}{2}\), which implies that \(\frac{\delta_2 - \delta_1}{\delta_1 - 1} \geq \frac{3}{2}\) (where $\frac{\delta_2 - \delta_1}{\delta_1 - 1}$ is later defined as $m$). Since the presence of vacuum and there is no positive lower bound of the density, we cannot estimate the $L^\infty$ norm of $\frac{1}{\rho}.$ Through careful analysis and providing precise ranges for the parameters, we would be able to control the norm of the density.

The rest of this paper is organized as follows. \S \ref{90} is
dedicated for the preliminary lemmas to be used later.  \S \ref{91} is devoted to proving the local well-posedness of classical solutions to the Cauchy problem \eqref{1}-\eqref{3}, i.e., Theorem \ref{38}.

\section{preliminaries}\label{90}
The following well-known Gagliardo-Nirenberg inequality will be used.
\begin{lemmaNoParens} [\cite{Ladyzhenskaya1968}]\label{41} For $p \in[2,6], q \in(1, \infty)$, and $r \in(3, \infty)$, there exists some generic constant $C>0$ that may depend on $q$ and $r$ such that for
$$
f \in H^1\left(\mathbb{R}^3\right), \text { and } g \in L^q\left(\mathbb{R}^3\right) \cap D^{1, r}\left(\mathbb{R}^3\right),
$$
we have
$$
\begin{aligned}
& |f|_p^p \leq C|f|_2^{(6-p) / 2}|\nabla f|_2^{(3 p-6) / 2}, \\
& |g|_{\infty} \leq C|g|_q^{q(r-3) /(3 r+q(r-3))}|\nabla g|_r^{3 r /(3 r+q(r-3))} .
\end{aligned}
$$
Some special versions of this inequality can be written as
$$
|u|_6 \leq C|u|_{D^1}, \quad|u|_{\infty} \leq C\|\nabla u\|_1, \quad|u|_{\infty} \leq C\|u\|_{W^{1, r}}.
$$
\end{lemmaNoParens}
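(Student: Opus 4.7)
The plan is to treat the two displayed inequalities as classical interpolation statements in $\mathbb{R}^3$ that follow from (i) the critical Sobolev embedding $\dot H^1(\mathbb{R}^3)\hookrightarrow L^6(\mathbb{R}^3)$ combined with Lebesgue log-convexity, and (ii) Morrey's inequality on a ball plus optimization in the ball radius. The three ``special versions'' are then immediate corollaries.

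For the first inequality, I would fix $p\in[2,6]$ and write $\tfrac{1}{p}=\tfrac{\theta}{2}+\tfrac{1-\theta}{6}$, which gives $\theta=(6-p)/(2p)$ and $1-\theta=(3p-6)/(2p)$. Log-convexity of $L^p$ norms (H\"older) yields $|f|_p\le |f|_2^{\theta}|f|_6^{1-\theta}$, and the critical Sobolev embedding provides $|f|_6\le C|\nabla f|_2$ for $f\in H^1(\mathbb{R}^3)$. Raising to the $p$-th power and substituting yields the stated bound $|f|_p^p\le C|f|_2^{(6-p)/2}|\nabla f|_2^{(3p-6)/2}$.

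For the second inequality, fix $x\in\mathbb{R}^3$ and a radius $R>0$, and split $g(x)=\bigl(g(x)-\bar g_R\bigr)+\bar g_R$, where $\bar g_R$ is the mean of $g$ over the ball $B_R(x)$. Since $r>3$, Morrey's inequality applied on $B_R$ gives $|g(x)-\bar g_R|\le CR^{1-3/r}|\nabla g|_{L^r(B_R)}$, while H\"older yields $|\bar g_R|\le CR^{-3/q}|g|_q$. Thus $|g(x)|\le C\bigl(R^{-3/q}|g|_q+R^{1-3/r}|\nabla g|_r\bigr)$. Optimizing by equating the two terms, i.e.\ taking $R=(|g|_q/|\nabla g|_r)^{qr/(q(r-3)+3r)}$, and collecting the exponents gives exactly $|g|_\infty\le C|g|_q^{q(r-3)/(3r+q(r-3))}|\nabla g|_r^{3r/(3r+q(r-3))}$, and I would double-check the computation by a homogeneity/scaling check using $g_\lambda(x)=g(\lambda x)$.

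For the three special versions: $|u|_6\le C|u|_{D^1}$ is just the critical Sobolev embedding used above, now stated in homogeneous form on $D^1=D^{1,2}(\mathbb{R}^3)$; $|u|_\infty\le C\|u\|_{W^{1,r}}$ for $r>3$ is precisely Morrey's embedding, which is also what drives the second inequality; and $|u|_\infty\le C\|\nabla u\|_1$ follows by applying the second inequality with $q=r=6$ to get $|u|_\infty\le C|u|_6^{1/2}|\nabla u|_6^{1/2}$, and then using $|u|_6\le C|\nabla u|_2$ and $|\nabla u|_6\le C|\nabla^2 u|_2$. I do not expect any genuine obstacle here, since the statement is the classical Gagliardo--Nirenberg--Ladyzhenskaya inequality; the only small care needed is working with the homogeneous space $D^{1,r}$ instead of $W^{1,r}$, which causes no trouble because only local Morrey bounds on balls are used.
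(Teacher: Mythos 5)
Your proposal is correct. Note that the paper itself gives no proof of this lemma: it is stated as a known result and attributed to Ladyzhenskaya--Solonnikov--Ural'ceva, so there is no argument in the paper to compare against. Your self-contained derivation is the standard one and all the computations check out: the interpolation exponent $\theta=(6-p)/(2p)$ together with $|f|_6\le C|\nabla f|_2$ gives the first inequality; the Morrey-plus-mean decomposition on $B_R(x)$ with the optimal radius $R=(|g|_q/|\nabla g|_r)^{qr/(q(r-3)+3r)}$ gives the second (and the scaling check confirms the exponents); and the three special versions follow as you describe, the only mildly nontrivial one being $|u|_\infty\le C\|\nabla u\|_1$, which your choice $q=r=6$ handles correctly since $u\in D^1\cap D^2$ implies $u\in L^6\cap D^{1,6}$ by Sobolev embedding.
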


The second one will show some compactness results from the Aubin-Lions Lemma.
\begin{lemmaNoParens}
 [\cite{Simon1987compact}]\label{22}  Let $X_0, X$ and $X_1$ be three Banach spaces with $X_0 \subset X \subset X_1$. Suppose that $X_0$ is compactly embedded in $X$ and that $X$ is continuously embedded in $X_1$. Then:
\begin{itemize}
\item Let $G$ be bounded in $L^p\left([0, T]; X_0\right)$ where $1 \leq p<\infty$, and $\frac{\partial G}{\partial t}$ be bounded in $L^1\left([0, T]; X_1\right)$, then $G$ is relatively compact in $L^p([0, T]; X)$.
\item Let $F$ be bounded in $L^{\infty}\left([0, T]; X_0\right)$ and $\frac{\partial F}{\partial t}$ be bounded in $L^p\left([0, T]; X_1\right)$ with $p>1$, then $F$ is relatively compact in $C([0, T]; X)$.
\end{itemize}
\end{lemmaNoParens}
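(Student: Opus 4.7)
The plan is to follow Simon's classical argument, which rests on three tools: the Ehrling--Lions--Peetre interpolation inequality, the Fr\'echet--Kolmogorov compactness criterion in Bochner spaces, and the Arzel\`a--Ascoli theorem. The first step is to establish the interpolation inequality: because $X_0$ embeds compactly in $X$ and $X$ embeds continuously in $X_1$, for every $\varepsilon>0$ there exists $C_\varepsilon>0$ with
$$\|u\|_X \leq \varepsilon \|u\|_{X_0}+C_\varepsilon \|u\|_{X_1},\qquad u\in X_0.$$
This follows by contradiction: a violating sequence $u_n$ would satisfy $\|u_n\|_X=1$, $\varepsilon\|u_n\|_{X_0}<1$, and $\|u_n\|_{X_1}\to 0$; compactness of $X_0\hookrightarrow X$ extracts a subsequence converging in $X$ to some $u$ with $\|u\|_X=1$, while continuity of $X\hookrightarrow X_1$ forces $u=0$, a contradiction.

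For part (1), I would verify the Fr\'echet--Kolmogorov criterion for $L^p([0,T];X)$: a bounded family is relatively compact iff (a) $\|\tau_h g-g\|_{L^p([0,T-h];X)}\to 0$ uniformly as $h\to 0$, and (b) for almost every $t$ the slice $\{g(t)\}$ is precompact in $X$. Condition (b) is immediate from the $L^p([0,T];X_0)$ bound and the compact embedding $X_0\hookrightarrow X$ (after a time-mollification reduction to pointwise-in-$t$ values). Condition (a) is the heart of the matter: applying the Ehrling inequality to $g(t+h)-g(t)$ gives
$$\|g(t+h)-g(t)\|_X \leq \varepsilon\bigl(\|g(t+h)\|_{X_0}+\|g(t)\|_{X_0}\bigr)+C_\varepsilon \int_t^{t+h}\|\partial_s g(s)\|_{X_1}\,ds.$$
Taking the $L^p$ norm in $t$ controls the first term by $C\varepsilon$ uniformly in $g\in G$, and the second term tends to $0$ by absolute continuity of the Lebesgue integral. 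The main obstacle lies exactly here: extracting uniform $L^p([0,T])$ smallness from an $L^1$ bound on $\partial_s g$ when $p>1$ is not automatic, and I expect to need a truncation argument splitting $\|\partial_s g(s)\|_{X_1}$ into a small-norm part (handled directly by H\"older) and a large-norm part (whose Lebesgue measure is uniformly small by Chebyshev applied to the $L^1$ bound).

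For part (2), I would invoke the Arzel\`a--Ascoli theorem in $C([0,T];X)$. Pointwise precompactness of $\{F(t)\}$ in $X$ follows from the $L^\infty([0,T];X_0)$ bound combined with the compact embedding $X_0\hookrightarrow X$. For equicontinuity, the Ehrling inequality again gives
$$\|f(t)-f(s)\|_X \leq \varepsilon\bigl(\|f(t)\|_{X_0}+\|f(s)\|_{X_0}\bigr)+C_\varepsilon \int_s^t \|\partial_\tau f(\tau)\|_{X_1}\,d\tau,$$
and since $p>1$, H\"older's inequality bounds the last integral by $C_\varepsilon |t-s|^{1-1/p}\|\partial_\tau f\|_{L^p([0,T];X_1)}$. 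Choosing $\varepsilon$ small to absorb the first term (which is uniformly bounded via the $L^\infty(X_0)$ hypothesis) and then restricting to $|t-s|$ sufficiently small yields uniform equicontinuity, in fact uniform H\"older continuity of exponent $1-1/p$. Arzel\`a--Ascoli then delivers relative compactness in $C([0,T];X)$, completing the proof.
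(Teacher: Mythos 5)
The paper does not prove this statement: it is quoted verbatim as a known preliminary (the Aubin--Lions--Simon lemma) with a citation to Simon's 1987 paper, so there is no in-paper argument to compare against. Your reconstruction follows Simon's classical proof and is essentially sound: Ehrling's interpolation inequality (your contradiction argument for it is correct), a Kolmogorov-type translation criterion for part (1), and Arzel\`a--Ascoli for part (2).

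Three remarks on precision. First, in part (1) the correct Bochner-space compactness criterion (Simon, Theorem 1) replaces your condition (b) ``$\{g(t)\}$ precompact for a.e.\ $t$'' by precompactness in $X$ of the time averages $\bigl\{\int_{t_1}^{t_2} g(t)\,dt : g\in G\bigr\}$; this is exactly what the $L^p([0,T];X_0)$ bound delivers for free, since $\bigl\|\int_{t_1}^{t_2} g\,dt\bigr\|_{X_0}\le (t_2-t_1)^{1-1/p}\|g\|_{L^p(X_0)}$ and bounded sets of $X_0$ are precompact in $X$. Your ``time-mollification reduction'' is the right instinct but is where the actual work lives, so it should be spelled out (one shows $g_h(t)=h^{-1}\int_t^{t+h}g$ forms, for fixed $h$, a family compact in $C([0,T-h];X)$ by Ascoli, and approximates $g$ uniformly in $L^p(X)$ by the translation estimate). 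Second, the ``truncation argument'' you anticipate needing is unnecessary: setting $F_g(t)=\int_t^{t+h}\|\partial_s g(s)\|_{X_1}\,ds$, Fubini gives $\|F_g\|_{L^1}\le h\,\|\partial_s g\|_{L^1(X_1)}$ while trivially $\|F_g\|_{L^\infty}\le \|\partial_s g\|_{L^1(X_1)}$, so $\|F_g\|_{L^p}\le h^{1/p}\|\partial_s g\|_{L^1(X_1)}\to 0$ uniformly by interpolation. Third, in part (2) your parenthetical claim of \emph{uniform H\"older continuity} of exponent $1-1/p$ in $X$ is an overstatement: the term $\varepsilon\bigl(\|f(t)\|_{X_0}+\|f(s)\|_{X_0}\bigr)$ does not decay at a H\"older rate as $|t-s|\to 0$, so one only obtains uniform equicontinuity in $X$ (H\"older continuity holds in $X_1$). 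That is all Arzel\`a--Ascoli needs, so the conclusion stands; you should also note the standard point that the $L^\infty(X_0)$ bound must be upgraded to an everywhere-in-$t$ bound (via the continuity of $f$ with values in $X_1$ and lower semicontinuity of the $X_0$-norm along the embedding) before applying Ascoli pointwise.
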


Next we give some Sobolev inequalities on the interpolation estimate, product estimate, composite function estimate and so on in the following  lemma:
\begin{lemmaNoParens}
 [\cite{Majda1986Compressible}]\label{103} Let $u \in H^s$, then for any $s^{\prime} \in[0, s]$, there exists a constant $C_s$ only depending on $s$ such that
\begin{equation*}
\|u\|_{s^{\prime}} \leq C_s|u|_2^{1-\frac{s^{\prime}}{s}}\|u\|_s^{\frac{s^{\prime}}{s}}.
\end{equation*}
\end{lemmaNoParens}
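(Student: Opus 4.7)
The plan is to prove this as a standard Fourier-side interpolation, using Plancherel's identity to convert $H^s$ norms into weighted $L^2$ integrals and then applying Hölder's inequality in the frequency variable. Recall that under the convention
\[
\|u\|_s^2 = \int_{\mathbb{R}^3} \bigl(1+|\xi|^2\bigr)^s |\hat u(\xi)|^2\,\dif\xi,
\]
this reduces the claim to a pointwise splitting of the weight $(1+|\xi|^2)^{s'}$ against $|\hat u|^2$.

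Set $\theta = s'/s \in [0,1]$, which is well-defined since $s'\in[0,s]$ and we may assume $s>0$ (the case $s=0$ is trivial). I would write
\[
(1+|\xi|^2)^{s'} |\hat u(\xi)|^2 \;=\; \bigl(|\hat u(\xi)|^2\bigr)^{1-\theta} \cdot \bigl( (1+|\xi|^2)^{s} |\hat u(\xi)|^2 \bigr)^{\theta},
\]
using $s' = \theta s$ to distribute the weight. Then I would apply Hölder's inequality with conjugate exponents $p = 1/(1-\theta)$ and $q = 1/\theta$ to obtain
\[
\int_{\mathbb{R}^3} (1+|\xi|^2)^{s'} |\hat u|^2 \,\dif\xi \;\le\; \left(\int |\hat u|^2\,\dif\xi\right)^{1-\theta} \left(\int (1+|\xi|^2)^{s} |\hat u|^2 \,\dif\xi\right)^{\theta}.
\]
Taking square roots and invoking Plancherel on the right-hand side yields
\[
\|u\|_{s'} \;\le\; |u|_2^{\,1-s'/s}\, \|u\|_s^{\,s'/s},
\]
so that $C_s = 1$ suffices (a larger constant appears only if one prefers to work with a non-normalized homogeneous/inhomogeneous norm equivalent).

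There is essentially no hard step here; the only points requiring care are the degenerate endpoints $\theta=0$ and $\theta=1$, where one of the Hölder exponents becomes infinite. I would handle these directly: $s'=0$ gives $\|u\|_0 = |u|_2$, and $s'=s$ gives an equality, so both fall outside the Hölder argument and pose no issue. The only genuine prerequisite is that $u\in H^s$ ensures $\hat u \in L^2$ and $(1+|\xi|^2)^{s/2}\hat u \in L^2$, which is precisely what is assumed, so the Hölder step is applied to finite quantities and the bound is meaningful.
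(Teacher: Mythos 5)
Your proof is correct and complete: the paper states this lemma without proof, citing Majda's book, and the Plancherel--H\"older interpolation on the Fourier side is precisely the standard argument given there. Your handling of the endpoints $s'=0$, $s'=s$ and your remark that $C_s$ only arises from passing between equivalent definitions of the $H^s$ norm are both accurate, so nothing is missing.
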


The following lemma is a useful tool for improving the weak convergence to a strong one.

\begin{lemmaNoParens}
 [\cite{Majda1986Compressible}]\label{104} If function sequence $\left\{w_n\right\}_{n=1}^{+\infty}$ converges weakly in a Hilbert space $X$ to $w$, then $w_n$ converges strongly to $w$ in $X$ if and only if
\begin{equation*}
\|w\|_X \geq \lim \sup _{n \rightarrow +\infty}\left\|w_n\right\|_X.
\end{equation*}
\end{lemmaNoParens}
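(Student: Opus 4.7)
The plan is to prove this as the standard Radon--Riesz property of Hilbert spaces by exploiting the inner product identity $\|w_n-w\|_X^2 = \|w_n\|_X^2 - 2\langle w_n,w\rangle + \|w\|_X^2$. I will treat the two implications separately.

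For the ``only if'' direction, I would simply note that strong convergence $w_n \to w$ in $X$ implies $\|w_n\|_X \to \|w\|_X$ by the reverse triangle inequality $|\,\|w_n\|_X - \|w\|_X\,| \le \|w_n-w\|_X$, so in particular $\limsup_{n\to\infty}\|w_n\|_X = \|w\|_X \le \|w\|_X$. This direction is essentially immediate.

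For the ``if'' direction, which is the substantive content, I would first invoke the weak lower semicontinuity of the norm in a Hilbert space (a consequence of Cauchy--Schwarz: $\|w\|_X^2 = \langle w,w\rangle = \lim_n \langle w_n,w\rangle \le \|w\|_X \liminf_n \|w_n\|_X$). Combining this with the hypothesis $\|w\|_X \ge \limsup_n \|w_n\|_X$ gives
\begin{equation*}
\|w\|_X \le \liminf_{n\to\infty}\|w_n\|_X \le \limsup_{n\to\infty}\|w_n\|_X \le \|w\|_X,
\end{equation*}
so $\lim_{n\to\infty}\|w_n\|_X = \|w\|_X$ exists and equals $\|w\|_X$. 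Then I would expand
\begin{equation*}
\|w_n - w\|_X^2 = \|w_n\|_X^2 - 2\langle w_n, w\rangle_X + \|w\|_X^2,
\end{equation*}
use that $\|w_n\|_X^2 \to \|w\|_X^2$ from the previous step and $\langle w_n,w\rangle_X \to \langle w,w\rangle_X = \|w\|_X^2$ from weak convergence, and conclude $\|w_n-w\|_X^2 \to 0$.

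There is no real obstacle here: the argument is entirely self-contained within abstract Hilbert space theory and does not interact with the compressible Navier--Stokes analysis in the rest of the paper. The only point worth being careful about is that the proof genuinely uses the Hilbert structure (specifically, the polarization identity and the fact that $\langle\cdot,w\rangle_X$ is a bounded linear functional to which the weak convergence hypothesis applies); in a general Banach space one would need the Kadec--Klee property to reach the same conclusion.
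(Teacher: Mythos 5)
Your proof is correct and complete; it is the standard Radon--Riesz argument (weak lower semicontinuity of the norm plus the hypothesis forces $\|w_n\|_X \to \|w\|_X$, and then the inner-product expansion of $\|w_n-w\|_X^2$ finishes). The paper itself gives no proof of this lemma --- it is simply quoted from the cited reference --- and your argument is exactly the one found there, so there is nothing to compare beyond noting that your remark about needing the Hilbert (or at least Kadec--Klee) structure is an accurate caveat.
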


The last lemma will be used to show the time continuity for the higher order terms of our solution:
\begin{lemmaNoParens}[\cite{Bol2003Semi}]\label{106}
 If $f(t, x) \in L^2\left([0, T] ; L^2\right)$, then there exists a sequence $s_k$ such that
$$
s_k \rightarrow 0, \quad \text { and } \quad s_k\left|f\left(s_k, x\right)\right|_2^2 \rightarrow 0, \quad \text { as } \quad k \rightarrow+\infty.
$$
\end{lemmaNoParens}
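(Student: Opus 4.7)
The plan is to reduce the statement to an elementary fact about integrable functions near the origin. Set $g(t) := |f(t,\cdot)|_2^2$, which by Fubini is defined and finite for a.e.\ $t \in [0,T]$, and
\[ \int_0^T g(t)\,\mathrm{d}t = \|f\|_{L^2([0,T];L^2)}^2 < +\infty, \]
so $g \in L^1([0,T])$. The conclusion of the lemma is equivalent to showing that $\liminf_{t \to 0^+} t\,g(t) = 0$, where the liminf is taken over the full-measure set $E \subset [0,T]$ on which $g$ is defined: once this is established, any sequence $s_k \in E$ with $s_k \to 0$ realizing the liminf value will satisfy $s_k |f(s_k,\cdot)|_2^2 \to 0$, which is exactly the desired conclusion.

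I would proceed by contradiction. Suppose instead that $\liminf_{t \to 0^+,\, t \in E} t\, g(t) =: c > 0$. Then there exists $\delta \in (0, T)$ such that $t\, g(t) \geq c/2$ for every $t \in E \cap (0, \delta)$, i.e.\ $g(t) \geq \frac{c}{2t}$ a.e.\ on $(0, \delta)$. Integrating this pointwise lower bound yields
\[ \int_0^T g(t)\,\mathrm{d}t \;\geq\; \int_0^\delta \frac{c}{2t}\,\mathrm{d}t \;=\; +\infty, \]
which contradicts $g \in L^1([0,T])$. Hence $\liminf_{t \to 0^+} t\, g(t) = 0$, and selecting any realizing sequence in $E$ produces the desired $\{s_k\}$.

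The argument is entirely routine and I do not expect any substantial obstacle. The only point requiring mild care is that $g$ is merely defined almost everywhere, so the contradiction step uses that a pointwise lower bound of the form $g \geq \frac{c}{2t}$ holding on a full-measure subset of $(0,\delta)$ already forces the Lebesgue integral to diverge. In the application in \S\ref{91}, the sequence $\{s_k\}$ will play the role of a vanishing set of times at which certain higher-order norms of the solution remain controlled, allowing one to pass the initial time in the time-continuity argument for the nonlinear problem.
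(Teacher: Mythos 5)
Your proof is correct. The paper does not prove this lemma at all — it is quoted as a known result from the cited reference of Boldrini, Rojas-Medar and Fernández-Cara — and your argument (setting $g(t)=|f(t,\cdot)|_2^2\in L^1([0,T])$ and observing that $\liminf_{t\to 0^+}t\,g(t)>0$ would force the non-integrable lower bound $g(t)\geq c/(2t)$ near $t=0$) is exactly the standard, self-contained way to establish it. The only cosmetic point is that writing $\liminf =: c>0$ tacitly assumes the liminf is finite; if it equals $+\infty$ the same lower bound holds a fortiori, so the contradiction goes through in either case.
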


\section{Local-in-time well-posedness of solutions}\label{91}
\subsection{Reformulation}
By introducing two new quantities,
\begin{equation*}
 \phi= \rho^{\frac{\gamma-1}{2}},\quad    \varphi=\rho^{\frac{\delta_1-1}{2}},   
\end{equation*}
equations \eqref{1}-\eqref{3} can be rewritten into a new system that consists of a transport equations for $\varphi$, and a ``quasi-symmetric hyperbolic"-``degenerate elliptic" coupled system with some special lower order source terms for $(\phi, u)$:
\begin{equation}\label{4}
\left\{\begin{split}
&\varphi_t+u\cdot \nabla \varphi+\frac{\delta_1-1}{2} \varphi  \text{div}u=0, \\
&\underbrace{A_0 W_t+ \sum_{j=1}^{3}A_j(W)\partial_j W}_{\text {symmetric hyperbolic }}+\underbrace{\varphi^2 ( \alpha L_1(u)+(\alpha+\beta\varphi^{\frac{2(\delta_2-\delta_1)}{\delta_1-1}}) L_2(u))}_{\text {degenerate elliptic }}\\
=& \underbrace{\mathbb{Q}_1(W)H(\varphi)+\mathbb{Q}_2(W)H(\varphi^{\frac{\delta_2-1}{\delta_1-1}})}_{\text {lower order source }}   , \\
&(\varphi, W)|_{t=0}=\left(\varphi_0(x), W_0(x)\right), \quad x \in \mathbb{R}^3, \\
&(\varphi, W) \rightarrow(0,0), \quad \text { as } \quad|x| \rightarrow+\infty, \quad t \geq 0,
\end{split}\right.
\end{equation}
where $W=(\phi,u)^{\top}, W_0=(\phi_0,u_0)^{\top}$ and
%\begin{equation}
\begin{align}
&A_0=
\left( \begin{array}{cc}
1 & 0 \\
0 & a_1 \mathbb{I}_3
 \end{array} \right),
 ~~~ A_j(W)=
\left( \begin{array}{cc}
u_j & \frac{\gamma-1}{2} \phi e_j^\top\\
\frac{\gamma-1}{2} \phi  e_j & a_1 u_j \mathbb{I}_3
 \end{array} \right),j=1,2,3, \nonumber\\ 
 &L_1(u)=
 \left( \begin{array}{c}
0 \\
-a_1\triangle u
 \end{array} \right),~~~
L_2(u)=
 \left( \begin{array}{c}
0 \\
-a_1\nabla \text{div}u
 \end{array} \right), 
 ~~~H(\varphi)=
 \left( \begin{array}{c}
0 \\
\nabla \varphi^2
 \end{array} \right),\nonumber\\
&\mathbb{Q}_1(W)=
\left( \begin{array}{cc}
0 &0 \\
0 & \frac{a_1\alpha \delta_1}{\delta_1-1} Q_1(u)
 \end{array} \right),
 ~~~\mathbb{Q}_2(W)=
\left( \begin{array}{cc}
0 &0 \\
0 & \frac{a_1\beta\delta_2}{\delta_2-1} Q_2(u)
 \end{array} \right),\nonumber\\
&Q_1(u)=\nabla u+(\nabla u)^\top,~~~ Q_2(u)=\text{div}u \mathbb{I}_3,~~~a_1=\frac{(\gamma-1)^2}{4A\gamma}.\nonumber
\end{align}

To prove Theorem \ref{38}, the first step is to establish the following existence of the strong solutions for the Cauchy problem \eqref{4}.

\begin{theorem}  \label{29}
 If initial data $\left(\varphi_0,\phi_0, u_0 \right)$ satisfies
\begin{equation}
\varphi_0 \geq 0, \quad \left(\varphi_0,\phi_0, u_0 \right) \in H^3,
\end{equation}
then there exists a time $T_*>0$ and a unique regular solution $(\varphi,\phi, u )$ in $\left[0, T_*\right] \times \mathbb{R}^3$ to the Cauchy problem \eqref{4} satisfying: 
\begin{equation}\label{88}
\begin{split}
&\varphi \in C([0,T];H^3), \varphi_t\in C([0,T];H^2), \phi\in C([0,T];H^3), \phi_t\in C([0,T];H^2), \\
&u\in C([0,T];H^{s^{\prime}})\cap L^{\infty}([0,T];H^3),s^\prime \in [2,3),
\varphi\nabla^4 u \in L^2\left([0, T] ; L^2\right),   \\
& \phi\nabla^4 u \in L^2\left([0, T] ; L^2\right),\quad u_t \in C\left([0, T] ; L^2\right) \cap L^2\left([0, T] ; D^1\right).
\end{split}
\end{equation}
Moreover, we have 
\begin{equation*}
\begin{aligned}
\sup _{0 \leq t \leq T_*} & \left(\|\phi\|_3^2+\|\varphi\|_3^2 +\|u\|_3^2\right)(t)  + \int_0^t |\varphi  \nabla^4 u|_2^2 \mathrm{d} s \leq C^0,
\end{aligned}
\end{equation*}
for arbitrary constant $s^{\prime} \in[2,3)$ and positive constant $C^0=C^0( \gamma, \delta_1, \delta_2 ,\varphi_0,\phi_0,  u_0)$.
\end{theorem}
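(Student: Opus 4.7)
The plan follows the scheme announced in the introduction: linearize \eqref{4}, regularize by an artificial viscosity $\eta^2$ to remove the degeneracy of the elliptic operator, establish $\eta$-independent $H^3$ estimates, send $\eta\to 0$, and close a Picard iteration for the nonlinear coupled system.

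First, I would freeze the nonlinearities. Given a previous velocity $v$ from the iteration, the scalar transport equation $\varphi_t+v\cdot\nabla\varphi+\tfrac{\delta_1-1}{2}\varphi\,\operatorname{div}v=0$ is solved by characteristics, giving $\varphi\in C([0,T];H^3)$ and preserving $\varphi\geq 0$; an analogous argument produces $\phi\in C([0,T];H^3)$. For $u$ I would solve the linear problem obtained by replacing $W$ in the hyperbolic part of \eqref{4} by $(\phi,v)^\top$ and adding $-\eta^2\Delta u$ to the left-hand side. For $\eta>0$ this is uniformly parabolic with smooth coefficients, so standard theory yields a unique strong solution.

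The heart of the argument is a set of $\eta$-independent a priori estimates up to order three. Differentiating the $W$-equation, testing against $\partial^\gamma W$ with $|\gamma|\leq 3$ and exploiting the symmetry of $A_0$ and $A_j(W)$ eliminates the top-order hyperbolic contribution; integration by parts in the elliptic part then produces the nonnegative weighted form
\begin{equation*}
\int_{\mathbb{R}^3}\Bigl(\alpha\varphi^2|\nabla\partial^\gamma u|^2+(\alpha+\beta\varphi^{2m})\varphi^2|\operatorname{div}\partial^\gamma u|^2\Bigr)\,\dif x,
\end{equation*}
where $m=(\delta_2-\delta_1)/(\delta_1-1)\geq 3/2$. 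Hypothesis $(A_2)$ is used exactly here to ensure $\alpha+\beta\varphi^{2m}\geq \tfrac{2}{3}\alpha>0$ on the short interval where the solution exists. The lower-order sources $\mathbb{Q}_1(W)H(\varphi)+\mathbb{Q}_2(W)H(\varphi^{(\delta_2-1)/(\delta_1-1)})$ are handled by combining the composite-function and product bounds in Lemma \ref{103} with the Gagliardo--Nirenberg inequalities in Lemma \ref{41}; the dangerous factor $\varphi^{(\delta_2-1)/(\delta_1-1)-1}$ appearing after differentiation stays $H^2$-bounded precisely because $m\geq 3/2$ keeps the relevant exponents above one. Collecting the terms, one obtains a Gronwall-type inequality for $\mathcal{E}(t):=\|\varphi\|_3^2+\|\phi\|_3^2+\|u\|_3^2$ with constants independent of $\eta$, yielding a uniform lifespan $T_*>0$. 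Passing $\eta\to 0$ via Lemma \ref{22} (strong compactness in $C([0,T];H^{s'})$ for $s'<3$), together with the continuity upgrades from Lemmas \ref{104} and \ref{106}, delivers a solution of the linearized problem with all the regularity listed in \eqref{88}, including $\varphi\nabla^4 u\in L^2([0,T];L^2)$ by elliptic interpretation of the $u$-equation.

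Finally, the nonlinear problem is handled by Picard iteration: define $(\varphi^{k+1},\phi^{k+1},u^{k+1})$ as the linearized solution built from $v=u^k$. The uniform $\mathcal{E}$-bound keeps the sequence in a bounded set of \eqref{88} on a common $[0,T_*]$. Contraction is established in the weaker norm $L^\infty([0,T];L^2)\cap L^2([0,T];D^1)$ by subtracting two consecutive iterates and running a low-order weighted energy estimate; shrinking $T_*$ closes the fixed point, and the same difference argument yields uniqueness. The main obstacle throughout, and the reason both restrictions $\delta_2>\delta_1>1$ and $\delta_2\geq \tfrac{5}{2}\delta_1-\tfrac{3}{2}$ appear, is the degeneracy of the elliptic operator $\varphi^2\Delta$ on $\{\rho=0\}$: since $1/\rho$ is not controlled, every estimate for $\nabla^k u$ is inherently weighted by $\varphi$, and balancing these weights against the hyperbolic coupling $\tfrac{\gamma-1}{2}\phi\nabla\phi$ and the source $H(\varphi^{(\delta_2-1)/(\delta_1-1)})$ is what consumes the exponent condition $m\geq 3/2$.
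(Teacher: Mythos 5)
Your proposal is correct and follows essentially the same route as the paper: linearize with frozen coefficients, regularize the degenerate elliptic operator by $\eta^2$, derive $\eta$-independent weighted $H^3$ energy estimates (using the symmetric hyperbolic cancellation, the positivity $\alpha+\beta\varphi^{2m}\gtrsim\alpha$ from $(A_2)$, and $m\geq 3/2$ for the source and commutator terms), pass to the limit $\eta\to 0$ by compactness, and close a Picard iteration with contraction in a low-order weighted norm. The only cosmetic deviations are that the paper regularizes by replacing $\varphi^2$ with $\varphi^2+\eta^2$ in the whole elliptic block rather than adding $-\eta^2\Delta u$, and obtains $\varphi\nabla^4 u\in L^2_tL^2_x$ directly from the top-order parabolic dissipation rather than by elliptic interpretation; neither affects the argument.
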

%In this paper, we investigate the local existence and finite time blowup problem for one dimensional Navier-Stokes equations with degenerate viscosity coefficient.

\subsection{Linearization}
\begin{equation}\label{5}
\left\{\begin{split}
&\varphi_t+v\cdot \nabla \varphi+\frac{\delta_1-1}{2} \tilde{\varphi}  \text{div}v=0, \\
&A_0 W_t+ \sum_{j=1}^{3}A_j(V)\partial_j W+(\varphi^2+\eta^2)( \alpha L_1(u)+(\alpha+\beta\varphi^{\frac{2(\delta_2-\delta_1)}{\delta_1-1}}) L_2(u))\\
=&\mathbb{Q}_1(V)H(\varphi)+\mathbb{Q}_2(V)H(\varphi^{\frac{\delta_2-1}{\delta_1-1}})   , \\
&(\varphi, W)|_{t=0}=\left(\varphi_0(x), W_0(x)\right), \quad x \in \mathbb{R}^3, \\
&(\varphi, W) \rightarrow(0,0), \quad \text { as } \quad|x| \rightarrow+\infty, \quad t \geq 0,
\end{split}\right.
\end{equation}
where $\eta \in(0,1]$ is a constant, $W=(\phi, u)^{\top}, V=(\tilde{\phi}, v)^{\top}$ and $W_0=\left(\phi_0, u_0\right)^{\top}$. $(\tilde{\varphi}, \tilde{\phi})$ are both known functions and $v=(v_1,v_2,v_3)^\top\in \mathbb{R}^3$ is a known vector satisfying the initial assumption $(\tilde{\varphi},\tilde{\phi},v)(t=0, x)=\left(\varphi_0, \phi_0, u_0\right)(x)$ and
\begin{equation}\label{12}
\begin{split}
&\tilde{\varphi} \in C([0,T];H^3), \quad \tilde{\varphi}_t\in C([0,T];H^2), \quad \tilde{\phi}\in C([0,T];H^3), \quad \tilde{\phi}_t\in C([0,T];H^2), \\
&v\in C([0,T];H^{s^{\prime}})\cap L^{\infty}([0,T];H^3),s^\prime \in [2,3),\\
&\tilde{\varphi} \nabla^4 v \in L^2\left([0, T] ; L^2\right), \quad  v_t \in C\left([0, T] ; H^1\right) \cap L^2\left([0, T] ; D^2\right).
\end{split}
\end{equation}
Moreover, we assume that 
\begin{equation}\label{18}
\varphi_0\geq 0, \quad \varphi_0\in H^3.
\end{equation}

For simplicity, we denote a constant $m\triangleq\frac{(\delta_2-\delta_1)}{\delta_1-1}.$ And we define 
\begin{equation}\label{107}
Lu\triangleq -a_1(\alpha \triangle u+(\alpha+\beta\varphi^{2m})\nabla \text{div}u).
\end{equation}

First, we give the definition of high-dimensional elliptic operators, see \cite{Chen1998Sec} for details.
\begin{definition}
  Consider the following second order partial differential equations in $\mathbb{R}^d$ for $u=(u^{(1)},u^{(2)},\cdots, u^{(d)})$:
  \begin{equation}
    \sum_{p,q=1}^{d}\sum_{j=1}^{d}-\partial_p(A_{ij}^{pq}\partial_q u^{(j)})+\sum_{j=1}^{d}\partial_j f^{(j)}_i=0 \quad (i=1,2,\cdots, d),
  \end{equation}
where $1\leq i,j,p,q\leq d, f^{(j)}_i: \mathbb{R}^d \rightarrow \mathbb{R}$ are real functions. For a second-order partial differential operator $\Phi$, the ith component of $\Phi u$ is defined as $\sum\limits_{p,q=1}^{d}\sum\limits_{j=1}^{d}-\partial_p(A_{ij}^{pq}\partial_q u^{(j)}) (i=1,2,\cdots, d).$ If for any $\xi=(\xi_1,\xi_2,\cdots,\xi_d),\zeta=(\zeta_1,\zeta_2,\cdots,\zeta_d)\in \mathbb{R}^d,$ there exists a constant $\sigma>0,$ such that
\begin{equation}\label{9}
\sum_{p,q=1}^{d}\sum_{i,j=1}^{d}A_{ij}^{pq}\xi_p\xi_q\zeta_i\zeta_j\geq \sigma |\xi|^2|\zeta|^2,
\end{equation}
then  $\Phi$ is called an elliptic operator.
\end{definition}

Next, we will prove that $Lu$ defined in \eqref{107} is an elliptic operator.
\begin{lemma}
$Lu$ is an elliptic operator, i.e. for any $\xi=(\xi_1,\xi_2,\xi_3),\zeta=(\zeta_1,\zeta_2,\zeta_3)\in \mathbb{R}^3,$ there exists a constant $\sigma>0,$ such that
\begin{equation}\label{9}
\sum_{p,q=1}^{3}\sum_{i,j=1}^{3}A_{ij}^{pq}\xi_p\xi_q\zeta_i\zeta_j\geq \sigma |\xi|^2|\zeta|^2.
\end{equation}
\end{lemma}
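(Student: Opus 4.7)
The plan is to read off the principal-part coefficients $A_{ij}^{pq}$ of $Lu$ directly from its definition, compute the Legendre--Hadamard quadratic form in closed form, and then use the sign condition imposed by $(A_2)$ on $\varphi$ to obtain coercivity. Since $L$ is second order and acts componentwise, this should be a direct calculation; the main issue is handling the case $\beta<0$.

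Concretely, I would first write the $i$-th component of $Lu$ as
\begin{equation*}
(Lu)_i = -a_1\alpha\sum_{p=1}^{3}\partial_p\partial_p u^{(i)} - a_1\bigl(\alpha+\beta\varphi^{2m}\bigr)\sum_{j=1}^{3}\partial_i\partial_j u^{(j)},
\end{equation*}
which, cast in the divergence form of the definition, identifies the principal coefficients as
\begin{equation*}
A_{ij}^{pq} = a_1\alpha\,\delta_{ij}\delta_{pq} + a_1\bigl(\alpha+\beta\varphi^{2m}\bigr)\delta_{ip}\delta_{jq}.
\end{equation*}
Substituting into the bilinear form and using $\sum_{p}\delta_{pq}\xi_p=\xi_q$ gives
\begin{equation*}
\sum_{p,q=1}^{3}\sum_{i,j=1}^{3}A_{ij}^{pq}\xi_p\xi_q\zeta_i\zeta_j = a_1\alpha\,|\xi|^2|\zeta|^2 + a_1\bigl(\alpha+\beta\varphi^{2m}\bigr)(\xi\cdot\zeta)^2.
\end{equation*}

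To conclude, I would split on the sign of $\beta$. If $\beta\geq 0$, then $\alpha+\beta\varphi^{2m}\geq\alpha>0$ and the second term is non-negative, so the inequality holds with $\sigma=a_1\alpha$. If $\beta<0$, recall $\varphi^{2m}=\rho^{\delta_2-\delta_1}$, so the assumption $(A_2)$, namely $\rho_0^{\delta_2-\delta_1}\leq -\alpha/(3\beta)$, yields $\alpha+\beta\varphi_0^{2m}\geq 2\alpha/3>0$ pointwise at $t=0$. The transport structure of $\varphi$ (from the first equation of \eqref{5}) and the regularity \eqref{12} of the coefficients propagate this bound to a short time interval $[0,T]$, say $\alpha+\beta\varphi^{2m}\geq \alpha/2$, after which the second term remains non-negative and ellipticity again holds with $\sigma=a_1\alpha$.

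The only genuine obstacle is the case $\beta<0$, where one must exploit $(A_2)$ together with continuity in time of $\varphi$ to keep $\alpha+\beta\varphi^{2m}$ bounded below; this is precisely the role for which condition $(A_2)$ has been introduced, and it is what ensures that the elliptic structure of $L$ is preserved throughout the iteration scheme used later in Section \ref{91}.
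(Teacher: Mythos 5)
Your proposal is correct and follows essentially the same route as the paper: identifying the principal coefficients $A_{ij}^{pq}=a_1\alpha\,\delta_{ij}\delta_{pq}+a_1(\alpha+\beta\varphi^{2m})\delta_{ip}\delta_{jq}$ (the paper lists them entrywise rather than via Kronecker deltas), reducing the form to $a_1\alpha|\xi|^2|\zeta|^2+a_1(\alpha+\beta\varphi^{2m})(\xi\cdot\zeta)^2$, and using $(A_2)$ plus short-time continuity of $\varphi$ to keep $\alpha+\beta\varphi^{2m}\geq\alpha/2$ when $\beta<0$, giving $\sigma=a_1\alpha$. This matches the paper's argument in both structure and conclusion.
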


\begin{proof}
From the definition of $Lu$, we have 
\begin{equation}
\begin{aligned}
&A_{11}^{11}=A_{22}^{22}=A_{33}^{33}=2a_1\alpha+a_1\beta\varphi^{2m},\\
&A_{11}^{22}=A_{11}^{33}=A_{22}^{11}=A_{22}^{33}=A_{33}^{11}=A_{33}^{22}=a_1\alpha,\\
&A_{12}^{12}=A_{13}^{13}=A_{21}^{21}=A_{23}^{23}=A_{31}^{31}=A_{32}^{32}=a_1\alpha+a_1\beta\varphi^{2m},
\end{aligned}
\end{equation}
$A_{ij}^{pq}=0,$ for others.

Therefore, for any $\xi=(\xi_1,\xi_2,\xi_3), \zeta=(\zeta_1,\zeta_2,\zeta_3)\in \mathbb{R}^3,$ we have
\begin{equation}
\begin{aligned}
\sum_{p,q=1}^{3}\sum_{i,j=1}^{3}A_{ij}^{pq}\xi_p\xi_q\zeta_i\zeta_j&=\sum_{i,j=1}^3\Big(a_1\alpha
\xi_j\xi_j\zeta_i\zeta_i+\big(a_1\alpha+a_1\beta\varphi^{2m}\big)\xi_i\xi_j\zeta_i\zeta_j\Big)\\
&=a_1\alpha|\xi|^2|\zeta|^2+ \sum_{i,j=1}^{3} a_1 \big(\alpha+\beta\varphi^{2m}\big)\xi_i\xi_j\zeta_i\zeta_j\\
&\triangleq \text{I}_1+\text{I}_2.
\end{aligned}
\end{equation}
\begin{equation}
\begin{aligned}
\text{I}_2&=a_1 \big(\alpha+\beta\varphi^{2m}\big)(\sum_{i=1}^{3}\xi_i\zeta_i)(\sum_{j=1}^{3}\xi_j\zeta_j)\\
&=a_1 \big(\alpha+\beta\varphi^{2m}\big)(\sum_{i=1}^{3}\xi_i\zeta_i)^2
\end{aligned}
\end{equation}
If $\beta<0$, since  $\alpha>0,$ and $\varphi_0^{2m}\leq -\frac{\alpha}{3\beta}$, there exists a $T^*$ such that 
\begin{equation*}
  \varphi(x,t)^{2m}\leq -\frac{\alpha}{2\beta},
\end{equation*}
which yields that
\begin{equation}\label{92}
  \alpha+\beta\varphi^{2m}\geq \frac{1}{2}\alpha> 0.
\end{equation}
And if $\beta\geq 0,$ \eqref{92} obviously holds.
Thus, $\text{I}_2\geq 0,$ which implies that
\begin{equation*}
  \text{I}_1+\text{I}_2\geq \text{I}_1\geq a_1\alpha |\xi|^2|\zeta|^2.
\end{equation*}
We can take $\sigma=a_1\alpha,$ and \eqref{9} holds. \qed
\end{proof}

\begin{lemma}\label{21}
 Assume that the initial data $\left(\varphi_0, \phi_0, u_0\right)$ satisfy \eqref{18}. Then there exists a unique strong solution $(\varphi, \phi, u)$ in $[0, T] \times \mathbb{R}^3$  to \eqref{5}  such that
$$
\begin{aligned}
& \varphi \in C\left([0, T] ; H^3\right),  \phi \in C\left([0, T] ; H^3\right), \\
& u \in C\left([0, T] ; H^3\right) \cap L^2\left([0, T] ; D^4\right), u_t \in C\left([0, T] ; H^1\right) \cap L^2\left([0, T] ; D^2\right),
\end{aligned}
$$
for some $T\in (0,T^*)$ and $\eta>0$.
\end{lemma}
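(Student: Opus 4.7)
The plan is to decouple system \eqref{5} and solve it in two stages. First I would solve the transport equation
\[
\varphi_t + v \cdot \nabla \varphi = -\tfrac{\delta_1-1}{2}\,\tilde{\varphi}\,\operatorname{div} v, \qquad \varphi|_{t=0} = \varphi_0 \in H^3,
\]
whose right-hand side is a given function built from the data with the regularity supplied by \eqref{12}. Since $v \in L^\infty([0,T]; H^3)$ is Lipschitz in space and $\tilde{\varphi} \in C([0,T]; H^3)$, the method of characteristics together with standard Moser-type estimates for derivatives up to order three yields a unique $\varphi \in C([0,T]; H^3)$, with $\varphi_t \in C([0,T]; H^2)$ recovered directly from the equation by the algebra property of $H^2 \cap H^3$ in $\mathbb{R}^3$.

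With $\varphi$ now fixed as a known coefficient, \eqref{5} reduces to a coupled linear system for $W = (\phi, u)$: a transport equation for $\phi$ with source $\tfrac{\gamma-1}{2}\tilde{\phi}\,\operatorname{div} u$, coupled with a second-order equation for $u$ whose elliptic part is
\[
-a_1(\varphi^2 + \eta^2)\bigl(\alpha \triangle u + (\alpha+\beta\varphi^{2m})\nabla \operatorname{div} u\bigr).
\]
The crucial point is that $\eta > 0$, combined with the lower bound $\alpha + \beta\varphi^{2m} \geq \alpha/2$ on $[0, T^*]$ supplied by the preceding ellipticity lemma, produces uniform ellipticity with constant at least $\eta^2 a_1 \alpha / 2$. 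Thus the equation for $u$ is a linear, uniformly parabolic equation with $H^3$ coefficients and an $H^2$-regular source consisting of $-\tfrac{\gamma-1}{2}\tilde{\phi}\nabla\phi$ together with the lower-order terms $\mathbb{Q}_1(V)H(\varphi)+\mathbb{Q}_2(V)H(\varphi^{(\delta_2-1)/(\delta_1-1)})$.

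I would solve this subsystem by a Picard-type iteration. Starting from $(\phi^0, u^0) = (\phi_0, u_0)$, I alternately solve the transport equation for $\phi^{n+1}$ using $u^n$ by characteristics, and the parabolic equation for $u^{n+1}$ using $\phi^{n+1}$ by Galerkin approximation against a smooth spectral basis. Energy estimates for the pair---testing the $\phi$ equation with $\phi$ and its spatial derivatives up to order three, and the $u$ equation with $u$, $\triangle u$, and $\triangle^2 u$, integrating by parts and using uniform ellipticity to absorb the top-order dissipation into the left-hand side---yield uniform-in-$n$ $H^3$ bounds on a possibly smaller time interval $[0, T] \subset (0, T^*)$. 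Aubin--Lions compactness (applied via the preliminary compactness lemma in \S\ref{90}) then extracts a strong limit, which is the desired solution. The time regularity $u_t \in C([0, T]; H^1) \cap L^2([0, T]; D^2)$ follows by differentiating the $u$ equation in $t$ and re-running the same energy scheme, while uniqueness comes from a standard energy estimate applied to the difference of two solutions, which satisfies a linear homogeneous equation with zero initial data.

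The main obstacle, in my view, is controlling the non-integer composition $\varphi^{2m}$ (and similarly $\varphi^{(\delta_2-1)/(\delta_1-1)}$ in the source) in $H^3$: derivatives produce factors of $\varphi^{2m-1}, \varphi^{2m-2}, \dots$, and every product or commutator involving this coefficient must be closed in the top-order estimate. Fortunately the hypothesis \eqref{78} forces $m \geq 3/2$, so $2m \geq 3$ and a Moser-type composition estimate (for smooth power functions composed with nonnegative $H^3$ inputs, using $\varphi \in H^3 \hookrightarrow L^\infty$) transfers $H^3$ regularity from $\varphi$ to $\varphi^{2m}$; combined with the algebra property of $H^3(\mathbb{R}^3)$, every commutator and product that appears in the energy identity can then be bounded on a sufficiently short time interval.
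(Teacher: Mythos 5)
Your proposal is correct and follows essentially the same route as the paper: solve the transport equation for $\varphi$ first by standard theory, then treat $\eqref{5}_2$ as a linear symmetric hyperbolic--parabolic system for $W=(\phi,u)$ whose elliptic part is uniformly non-degenerate because $\eta>0$ (together with $\alpha+\beta\varphi^{2m}\ge\alpha/2$). The paper simply cites standard references and omits the details, so your iteration/Galerkin/energy-estimate sketch is a fleshed-out version of the same argument.
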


\begin{proof} First, the existence and regularities of a unique solution $\varphi$ in $[0, T] \times \mathbb{R}^3$ to the equation $\eqref{5}_1$ can be obtained by the standard theory of transport equation (see \cite{Evans2010Partial}).

Second, when $\eta>0$, based on the regularities of $\varphi$, it is not difficult to solve $W$ from the linear symmetric hyperbolic-parabolic coupled system $\eqref{5}_2$ to complete the proof of this lemma (see \cite{Evans2010Partial}). Here we omit its details. \qed
\end{proof}

\subsection{A Priori estimates}\label{84}
%Let $(\varphi, \phi, u)$ be the unique strong solution to \eqref{2} in $[0, T] \times \mathbb{R}$ obtained in Lemma \ref{40}. In this subsection, we will get some local (in time) a priori estimates for $(\phi, u)$ in $H^2$ space, which are independent of $\eta$ listed in the following Lemmas \ref{23}-\ref{25}. For this purpose, 
We fix $T>0$ and a positive constant $c_0$ large enough such that
\begin{equation}\label{81}
1+\left\|\varphi_0\right\|_3+\left\|\phi_0\right\|_3+\left\|u_0\right\|_3 \leq c_0,
\end{equation}
and
\begin{equation}\label{26}
\begin{aligned}
& \sup _{0 \leq t \leq T^{**}}\left(\|\tilde{\varphi}(t)\|_1^2+\|\tilde{\phi}(t)\|_1^2+\|v(t)\|_1^2\right)+\int_0^{T^{**}} \left|\tilde{\varphi} \nabla^2 v\right|_2^2 \mathrm{d} t \leq c_1^2, \\
& \sup _{0 \leq t \leq T^{**}}\left(\|\tilde{\varphi}(t)\|_2^2+\|\tilde{\phi}(t)\|_2^2+\|v(t)\|_2^2\right)+\int_0^{T^{**}} \left|\tilde{\varphi} \nabla^3 v\right|_2^2 \mathrm{d} t \leq c_2^2, \\
& \sup _{0 \leq t \leq T^{**}}\left(\|\tilde{\varphi}(t)\|_3^2+\|\tilde{\phi}(t)\|_3^2+\|v(t)\|_3^2\right)+\int_0^{T^{**}} \left|\tilde{\varphi} \nabla^4 v\right|_2^2 \mathrm{d} t \leq c_3^2, \\
\end{aligned}
\end{equation}
for some time $T^{**} \in(0, T)$ and positive constants $c_i(i=1,2,3)$ such that
\begin{equation*}
c_0 \leq c_1 \leq c_2 \leq c_3 .
\end{equation*}
The constants $c_i(i=1,2,3)$ and $T^{**}$ will be determined later (see \eqref{82}) and depend only on $c_0$ and the fixed constants $\gamma, \delta_1,\delta_2,$ and $T$.

\begin{lemma}\label{23}
Let $T_1=\min \left\{T^{**},\left(1+c_3\right)^{-2}\right\}$.  Then for $0 \leq t \leq T_1$,
\begin{equation*}
\begin{aligned}
\|\varphi(t)\|_3\leq Cc_0, \quad \|\varphi_t(t)\|_2\leq C c_3^2. %\quad |\varphi_t(t)|_2\leq Cc_1^2,\quad |\varphi_t(t)|_{D^1}\leq Cc_2^2.
\end{aligned}
\end{equation*}
\end{lemma}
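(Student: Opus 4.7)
The equation $\eqref{5}_1$ is a linear transport equation for $\varphi$ with known coefficients $v$ and forcing $-\tfrac{\delta_1-1}{2}\tilde\varphi\,\dd v$. My plan is to run a standard high-order energy estimate on this transport equation in $H^3$, exploiting the bounds on $(v,\tilde\varphi)$ provided by \eqref{26}, and then recover the bound on $\varphi_t$ by inverting the equation and using the $H^2$ algebra property in $\mathbb{R}^3$.

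For the $H^3$ estimate, I would apply $\partial^k$ for $0\leq |k|\leq 3$ to the equation, multiply by $\partial^k\varphi$, and integrate over $\mathbb{R}^3$. The principal term $\int v\cdot\nabla(\partial^k\varphi)\,\partial^k\varphi$ is handled by integration by parts, producing $\tfrac12\int(\dd v)|\partial^k\varphi|^2$ bounded by $C|\nabla v|_\infty\|\varphi\|_3^2$. The commutators $[\partial^k, v\cdot\nabla]\varphi$ are controlled by the standard Moser-type estimate
\begin{equation*}
\sum_{|k|\leq 3}\bigl|[\partial^k,v\cdot\nabla]\varphi\bigr|_2 \leq C\bigl(|\nabla v|_\infty\|\nabla\varphi\|_2 + \|\nabla v\|_2|\nabla\varphi|_\infty\bigr)\leq C\|v\|_3\|\varphi\|_3,
\end{equation*}
and the source $\tilde\varphi\,\dd v$ is bounded in $H^3$ by $C\|\tilde\varphi\|_3\|v\|_3$ via the usual product estimate. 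Combining these yields
\begin{equation*}
\frac{d}{dt}\|\varphi\|_3^2 \leq C\|v\|_3\|\varphi\|_3^2 + C\|\tilde\varphi\|_3\|v\|_3\|\varphi\|_3 \leq Cc_3\|\varphi\|_3^2 + Cc_3^2\|\varphi\|_3.
\end{equation*}
Dividing by $\|\varphi\|_3$ (after the usual regularization) gives $\tfrac{d}{dt}\|\varphi\|_3 \leq Cc_3\|\varphi\|_3 + Cc_3^2$, and Grönwall's inequality yields $\|\varphi(t)\|_3 \leq e^{Cc_3 t}(\|\varphi_0\|_3 + Cc_3^2 t)$. Restricting to $t\leq T_1\leq (1+c_3)^{-2}$ makes both $c_3 t$ and $c_3^2 t$ bounded by a universal constant, so the right-hand side collapses to $C(\|\varphi_0\|_3+1)\leq Cc_0$ by \eqref{81}.

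For the estimate on $\varphi_t$, I would read it off directly from the PDE: $\varphi_t = -v\cdot\nabla\varphi - \tfrac{\delta_1-1}{2}\tilde\varphi\,\dd v$. Using the fact that $H^2(\mathbb{R}^3)$ is a Banach algebra,
\begin{equation*}
\|\varphi_t\|_2 \leq C\|v\|_2\|\nabla\varphi\|_2 + C\|\tilde\varphi\|_2\|\dd v\|_2 \leq Cc_3\cdot Cc_0 + Cc_3\cdot c_3 \leq Cc_3^2,
\end{equation*}
where I used the $H^3$ bound on $\varphi$ just obtained and the $H^3$ bounds on $(\tilde\varphi,v)$ from \eqref{26}.

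The main obstacle is genuinely the commutator estimate at the top order $|k|=3$; everything else is routine. The Grönwall step requires careful tracking of how $c_3$ enters the constants so that the restriction $t\leq (1+c_3)^{-2}$ absorbs both the exponential and the inhomogeneous contributions into a single multiple of $c_0$; this is the reason for the specific form of $T_1$.
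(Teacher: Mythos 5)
Your overall strategy (high-order energy estimate on the transport equation, Gr\"onwall, then reading $\varphi_t$ off the equation) is the same as the paper's, and the commutator treatment and the $\varphi_t$ estimate are fine. But there is a genuine gap at the top order, and you have misidentified where the difficulty lies: it is not the commutator $[\partial^k, v\cdot\nabla]\varphi$ (which closes exactly as you wrote), but the source term. Your claim that $\tilde\varphi\,\dd v$ is bounded in $H^3$ by $C\|\tilde\varphi\|_3\|v\|_3$ ``via the usual product estimate'' is false: the product estimate $\|fg\|_3\le C\|f\|_3\|g\|_3$ requires \emph{both} factors in $H^3$, whereas $\dd v\in H^2$ only, since $v$ is merely in $H^3$. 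Concretely, $\nabla^3(\tilde\varphi\,\dd v)$ contains the term $\tilde\varphi\,\nabla^3\dd v\sim\tilde\varphi\,\nabla^4 v$, and $\nabla^4 v$ is not in $L^2$; only the weighted quantity $\tilde\varphi\,\nabla^4 v$ is controlled, and only in $L^2$ in time (see \eqref{12} and the third line of \eqref{26}). Consequently your differential inequality $\tfrac{d}{dt}\|\varphi\|_3\le Cc_3\|\varphi\|_3+Cc_3^2$ is not justified: the correct right-hand side carries the extra term $C|\tilde\varphi\,\nabla^4 v|_2$, which is \emph{not} bounded by $Cc_3^2$ pointwise in time.

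The repair is exactly what the paper does: keep $|\tilde\varphi\,\nabla^4 v|_2$ in the Gr\"onwall inequality, integrate in time, and estimate
\begin{equation*}
\int_0^t |\tilde\varphi\,\nabla^4 v|_2\,\mathrm{d}s\le t^{\frac12}\Bigl(\int_0^t |\tilde\varphi\,\nabla^4 v|_2^2\,\mathrm{d}s\Bigr)^{\frac12}\le c_3\,t^{\frac12},
\end{equation*}
which is $O(1)$ on $[0,T_1]$ with $T_1\le(1+c_3)^{-2}$ (this is in fact the second reason, besides absorbing the exponential, for the specific power in the definition of $T_1$). With that modification the rest of your argument, including the $\|\varphi_t\|_2\le Cc_3^2$ bound via the $H^2$ algebra property, goes through.
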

\begin{proof}
Apply the operator $\partial_x^\zeta(0\leq |\zeta|\leq 3)$ to $\eqref{5}_1,$ we have 
\begin{equation}\label{6}
  (\partial_x^\zeta \varphi)_t+v \cdot \nabla \partial_x^\zeta \varphi=-(\partial_x^\zeta(v\cdot \nabla \varphi)-v\cdot\nabla \partial_x^\zeta \varphi)-\frac{\delta_1-1}{2} \partial_x^\zeta(\tilde{\varphi}\text{div}v).
\end{equation}
Multiplying both sides of $\eqref{6}$ by $\partial_x^\zeta \varphi$, integrating over $\mathbb{R}^3$, we get
\begin{equation}
\begin{split}
   \frac{1}{2}\frac{\text{d}}{\text{d}t}|\partial_x^\zeta \varphi|_2^2\leq& C|\text{div} v|_\infty |\partial_x^\zeta \varphi|_2^2+C|\partial_x^\zeta(v\cdot \nabla \varphi-v\cdot\nabla \partial_x^\zeta \varphi)|_2|\partial_x^\zeta \varphi|_2 + C|\partial_x^\zeta(\tilde{\varphi}\text{div}v)|_2|\partial_x^\zeta \varphi|_2.
\end{split}
\end{equation}
First we consider the case where  $|\zeta|\leq 2.$
\begin{equation}
\begin{aligned}
 & |\partial_x^\zeta(v\cdot \nabla \varphi-v\cdot\nabla \partial_x^\zeta \varphi)|_2
  \leq C(|\nabla v\cdot \nabla \varphi|_2+|\nabla v\cdot \nabla^2 \varphi|_2+|\nabla^2 v\cdot \nabla \varphi|_2)\leq C\|v\|_3\|\varphi\|_2,\\
 & |\partial_x^\zeta(\tilde{\varphi}\text{div}v)|_2\leq C\|\tilde{\varphi}\|_2\|v\|_3,
\end{aligned}
\end{equation}
which yields that 
\begin{equation}
  \frac{\text{d}}{\text{d}t} \|\varphi\|_2\leq C\|v\|_3\|\varphi\|_2+C\|\tilde{\varphi}\|_2\|v\|_3.
\end{equation}
Then, according to Gronwall's inequality, one has 
\begin{equation}
   \|\varphi\|_2\leq (\|\varphi_0\|_2+Cc_3^2 t)\exp(Cc_3 t)\leq Cc_0,
\end{equation}
for $0\leq T_1=\min\{T^*,(1+c_3)^{-2}\}.$

When $|\zeta|= 3,$
\begin{equation*}
\begin{aligned}
|\partial_x^\zeta(v\cdot \nabla \varphi)-v\cdot\nabla \partial_x^\zeta \varphi|_2
\leq & C(|\nabla^3 v\cdot \nabla \varphi|_2+|\nabla^2 v\cdot \nabla^2 \varphi|_2+|\nabla v\cdot \nabla^3 \varphi|_2)\\
\leq & C\|v\|_3\|\varphi\|_3,
\end{aligned}
\end{equation*}
\begin{equation*}
\begin{aligned}
|\partial_x^\zeta(\tilde{\varphi}\text{div}v)|_2&\leq C(|\tilde{\varphi}\nabla^4 v|_2+|\nabla \tilde{\varphi}\cdot \nabla^3 v|_2+|\nabla^2 \tilde{\varphi}\nabla^2 v|_2 +|\nabla^3 \tilde{\varphi}\nabla v|_2)\\
&\leq C(|\tilde{\varphi}\nabla^4 v|_2+\|\tilde{\varphi}\|_3\|v\|_3).
\end{aligned}
\end{equation*}

Then we have 
\begin{equation}
\frac{\mathrm{d}}{\mathrm{d}t} \|\varphi\|_3\leq C(\|v\|_3\|\varphi\|_3+\|v\|_3\|\tilde{\varphi}\|_3+|\tilde{\varphi}\nabla^4 v|_2).
\end{equation}
According to Gronwall's Inequality, we obtain that
\begin{equation}\label{7}
\|\varphi\|_3\leq (\|\varphi_0\|_3+c_3^2 t+\int_0^t|\tilde{\varphi}\nabla^4 v|_2 \mathrm{d}s) \exp(Cc_3t).
\end{equation}
Observing that 
\begin{equation}\label{8}
\int_0^t |\tilde{\varphi}\nabla^4 v|_2\mathrm{d}s \leq t^{\frac{1}{2}}(\int_0^t |\tilde{\varphi}\nabla^4 v|_2^2 \mathrm{d}s )^{\frac{1}{2}}\leq c_3 t^{\frac{1}{2}}.
\end{equation}
It follows from \eqref{7}-\eqref{8} that 
\begin{equation}
\|\varphi\|_3\leq Cc_0,
\end{equation}
for $0\leq t\leq T_1.$ 

From 
\begin{equation}
\varphi_t=-v\cdot \nabla \varphi-\frac{\delta_1-1}{2} \tilde{\varphi}  \text{div}v ,
\end{equation}
we easily have 
\begin{equation*}
\begin{aligned}
|\varphi_t|_2\leq & C(|v|_\infty|\nabla\varphi|_2+|\tilde{\varphi}|_\infty|\text{div} v|_2)\leq Cc_2^2,\\
|\varphi_t|_{D^1}\leq & C(|v|_\infty|\nabla^2\varphi|_2+|\nabla v|_\infty|\nabla \varphi|_2+|\nabla \tilde{\varphi}|_{\infty}|\nabla v|_2+|\tilde{\varphi}|_\infty|\nabla^2 v|_2)\leq Cc_3^2,\\
|\varphi_t|_{D^2}\leq & C(|v|_\infty|\nabla^3\varphi|_2+|\nabla v|_\infty|\nabla^2 \varphi|_2+|\nabla^2 v|_2|\nabla \varphi|_\infty\\
&+|\nabla \tilde{\varphi}|_{\infty}|\nabla^2 v|_2+|\tilde{\varphi}|_\infty|\nabla^3 v|_2)\leq Cc_3^2.
\end{aligned}
\end{equation*}
Thus, we complete the proof of the lemma.
\qed
\end{proof}

\begin{lemma}
Let $(\varphi, W)$ be the solutions to \eqref{5}.  Then  
\begin{equation*}
\begin{aligned}
\|W(t)\|_1^2+\int_0^t |\sqrt{\varphi^2+\eta^2}\nabla^2 u|_2^2 \mathrm{d}s  \leq C c_0^2,
\end{aligned}
\end{equation*}
for $0 \leq t \leq T_2\triangleq \min \left\{T_1,\left(1+c_3\right)^{-4m-2}\right\}$.
\end{lemma}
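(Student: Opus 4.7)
The plan is to close standard $L^2$- and $H^1$-energy estimates for $W=(\phi,u)^\top$ directly from the quasi-symmetric hyperbolic--degenerate elliptic structure of $\eqref{5}_2$. The $L^2$ level will control $|W|_2^2$ together with the parabolic quantity $\int_0^t|\sqrt{\varphi^2+\eta^2}\,\nabla u|_2^2\mathrm{d}s$, and the differentiated $H^1$ level will upgrade this to $|\nabla W|_2^2$ and yield the target $\int_0^t|\sqrt{\varphi^2+\eta^2}\,\nabla^2 u|_2^2\mathrm{d}s$. Throughout I use the pointwise bounds $\|\varphi\|_3\le Cc_0$ and $\|\varphi_t\|_2\le Cc_3^2$ from Lemma~\ref{23}, the ellipticity $\alpha+\beta\varphi^{2m}\ge\alpha/2$ from \eqref{92}, and the a priori assumptions \eqref{26} on $V=(\tilde\phi,v)^\top$.

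\textbf{$L^2$ estimate.} First I would take the inner product of $\eqref{5}_2$ with $W$ and integrate over $\mathbb{R}^3$. Symmetry of $A_0$ (constant, positive) and of each $A_j(V)$ turns the hyperbolic part into $\tfrac12\tfrac{d}{dt}\int W^\top A_0 W$ plus a remainder $-\tfrac12\int W^\top(\partial_j A_j(V))W$ bounded by $C|\nabla V|_\infty|W|_2^2\le Cc_3|W|_2^2$. Integration by parts on the elliptic term produces the coercive quantity
\begin{equation*}
a_1\int(\varphi^2+\eta^2)\bigl[\alpha|\nabla u|^2+(\alpha+\beta\varphi^{2m})(\mathrm{div}\,u)^2\bigr]\mathrm{d}x \ge \tfrac{a_1\alpha}{2}\int(\varphi^2+\eta^2)|\nabla u|^2\mathrm{d}x
\end{equation*}
by \eqref{92}, plus commutator errors of the form $\int\nabla(\varphi^2+\eta^2)\cdot u\cdot\nabla u$ and $\int\nabla\bigl(\varphi^{2m}(\varphi^2+\eta^2)\bigr)\cdot u\,\mathrm{div}\,u$, which are split via Cauchy--Schwarz/Young so that half of the coercive term is absorbed and the residue is controlled by $C(c_3)|W|_2^2$. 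The source $\mathbb Q_1(V)H(\varphi)+\mathbb Q_2(V)H(\varphi^{(\delta_2-1)/(\delta_1-1)})$ paired with $W$ is $\lesssim|\nabla v|_\infty\bigl(|\varphi|_\infty|\nabla\varphi|_2+|\varphi|_\infty^m|\nabla\varphi|_2\bigr)|u|_2$, with every factor controlled via Lemma~\ref{23}.

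\textbf{$H^1$ estimate and closing.} Applying $\partial_k$ ($k=1,2,3$) to $\eqref{5}_2$, pairing with $\partial_k W$, and summing, the hyperbolic contribution is handled as above (the commutator $[\partial_k,A_j(V)\partial_j]W\cdot\partial_k W$ is bounded using Lemma~\ref{41} and \eqref{26}). The leading elliptic contribution, where $\partial_k$ falls on $u$, yields after integration by parts
\begin{equation*}
a_1\sum_k\int(\varphi^2+\eta^2)\bigl[\alpha|\nabla\partial_k u|^2+(\alpha+\beta\varphi^{2m})|\partial_k\mathrm{div}\,u|^2\bigr]\mathrm{d}x,
\end{equation*}
which, combined with \eqref{92}, bounds $\int(\varphi^2+\eta^2)|\nabla^2 u|^2$ from below. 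The remaining pieces, where $\partial_k$ hits $\varphi^2+\eta^2$ or $\varphi^{2m}$, produce factors $|\varphi|_\infty^{2m-1}|\nabla\varphi|_\infty$ that are finite since $m\ge 3/2$; the source derivatives are closed using $\partial_k(\varphi^m\nabla\varphi)=m\varphi^{m-1}\partial_k\varphi\,\nabla\varphi+\varphi^m\nabla\partial_k\varphi$, again controlled by $m\ge 1$. Adding the $L^2$ and $H^1$ estimates gives
\begin{equation*}
\tfrac{d}{dt}\|W\|_1^2+c\int(\varphi^2+\eta^2)\bigl(|\nabla u|^2+|\nabla^2 u|^2\bigr)\mathrm{d}x\le P(c_3)\|W\|_1^2+P(c_3),
\end{equation*}
for some polynomial $P$; Gronwall on $[0,T_2]$ with $T_2\le(1+c_3)^{-(4m+2)}$ and $\|W(0)\|_1\le c_0$ then yields $\|W(t)\|_1^2\le Cc_0^2$ and, after time-integration of the coercive term, the claimed bound on $\int_0^t|\sqrt{\varphi^2+\eta^2}\,\nabla^2 u|_2^2\mathrm{d}s$.

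\textbf{Main obstacle.} The principal difficulty is bookkeeping the exact polynomial degree in $c_3$ that fixes the choice $T_2=\min\{T_1,(1+c_3)^{-(4m+2)}\}$: the worst $c_3$-powers arise when $\partial_k$ acts on $(\alpha+\beta\varphi^{2m})(\varphi^2+\eta^2)$ inside the elliptic term and on $\varphi^{(\delta_2-1)/(\delta_1-1)}$ inside $\mathbb Q_2(V)H(\cdot)$; both routes generate factors of the type $|\varphi|_\infty^{2m}|\nabla\varphi|_\infty|\nabla v|_\infty$ whose polynomial degree in $c_3$ is exactly $4m+2$. The positivity of $\alpha+\beta\varphi^{2m}$ is essential throughout and is precisely where the sign condition $(A_2)$ and the restriction $\delta_2>\delta_1$ enter via \eqref{92}.
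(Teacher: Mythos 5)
Your proposal follows essentially the same route as the paper: apply $\partial_x^\zeta$ with $|\zeta|\le 1$ to $\eqref{5}_2$, pair with $\partial_x^\zeta W$, exploit the symmetric-hyperbolic structure together with the ellipticity \eqref{92} to extract the coercive term $|\sqrt{\varphi^2+\eta^2}\,\nabla\partial_x^\zeta u|_2^2$, absorb the degenerate-elliptic commutators by Young's inequality, bound the remaining hyperbolic commutators and source terms by $Cc_3^{4m+2}(\|W\|_1^2+1)$ using Lemma~\ref{23} and \eqref{26}, and conclude by Gronwall on $[0,T_2]$. Your bookkeeping of the worst power $c_3^{4m+2}$, which fixes $T_2=\min\{T_1,(1+c_3)^{-4m-2}\}$, matches the paper's term-by-term estimates $\mathrm{I}_1$--$\mathrm{I}_{10}$ in \eqref{13}.
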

\begin{proof}
Applying $\partial_x^\zeta$ to $\eqref{5}_2$, we have 
\begin{equation}\label{10}
\begin{split}
&A_0 \partial_x^\zeta W_t+\sum_{j=1}^{3} A_j(V)\partial_j \partial_x^\zeta W+ \alpha (\varphi^2+\eta^2) L_1(\partial_x^\zeta u)+(\varphi^2+\eta^2)(\alpha+\beta\varphi^{\frac{2(\delta_2-\delta_1)}{\delta_1-1}}) L_2(\partial_x^\zeta  u)\\
=& \partial_x^\zeta \mathbb{Q}_1(V)H(\varphi)+\partial_x^\zeta  \mathbb{Q}_2(V)H(\varphi^{\frac{\delta_2-1}{\delta_1-1}})-\sum_{j=1}^{3}\Big(\partial_x^\zeta\big(A_j(V)\partial_j W\big)-A_j(V)\partial_j \partial_x^\zeta W\Big)\\
& -\Big( \partial_x^\zeta\big( \alpha(\varphi^2+\eta^2) L_1(u) \big)- \alpha(\varphi^2+\eta^2) L_1(\partial_x^\zeta u) \Big)\\
&-\Big( \partial_x^\zeta\big( (\varphi+\eta^2)(\alpha+\beta\varphi^{\frac{2(\delta_2-\delta_1)}{\delta_1-1}}) L_2(u)\big)  -(\varphi+\eta^2)(\alpha+\beta\varphi^{\frac{2(\delta_2-\delta_1)}{\delta_1-1}}) L_2(\partial_x^\zeta u)  \Big)\\
 &+\Big( \partial_x^\zeta \big(\mathbb{Q}_1(V)H(\varphi)\big)- \partial_x^\zeta \mathbb{Q}_1(V)H(\varphi)\Big)\\
 &+\Big( \partial_x^\zeta \big(\mathbb{Q}_2(V)H(\varphi^{\frac{\delta_2-1}{\delta_1-1}})\big)- \partial_x^\zeta \mathbb{Q}_1(V)H(\varphi^{\frac{\delta_2-1}{\delta_1-1}})\Big).
\end{split} 
\end{equation}

Multiplying \eqref{10} by $\partial_x^\zeta W$ on both sides and integrating over $\mathbb{R}^3,$
\begin{align}
   &\frac{1}{2} \frac{\mathrm{d}}{\mathrm{d}t}\int (\partial_x^\zeta W)^\top A_0 \partial_x^\zeta W+a_1 \alpha|\sqrt{\varphi^2+\eta^2}\nabla \partial_x^\zeta u|_2^2+\frac{a_1\alpha}{2}|\sqrt{\varphi^2+\eta^2}\text{div}\partial_x^\zeta u|_2^2 \nonumber\\
   \leq&\frac{1}{2} \frac{\mathrm{d}}{\mathrm{d}t}\int (\partial_x^\zeta W)^\top A_0 \partial_x^\zeta W+a_1 \alpha|\sqrt{\varphi^2+\eta^2}\nabla \partial_x^\zeta u|_2^2+a_1\int (\alpha+\beta\varphi^{\frac{2(\delta_2-\delta_1)}{\delta_1-1}})(\varphi^2+\eta^2)\text{div}\partial_x^\zeta u\cdot \text{div}\partial_x^\zeta u\nonumber\\
  =& \frac{1}{2} \int (\partial_x^\zeta W)^\top \text{div}A(V) \partial_x^\zeta W -a_1\alpha\int(\nabla(\varphi^2+\eta^2)\cdot \nabla \partial_x^\zeta u )\cdot \partial_x^\zeta u\nonumber\\
  &-a_1 \int\Big(\nabla((\varphi^2+\eta^2)(\alpha+\beta\varphi^{\frac{2(\delta_2-\delta_1)}{\delta_1-1}})) \cdot \text{div} \partial_x^\zeta u \mathbb{I}_3 \Big)\cdot \partial_x^\zeta u \nonumber\\
  &+\frac{\alpha a_1\delta_1}{\delta_1-1}\int (\nabla \varphi^2 \partial_x^\zeta Q_1(v))\cdot \partial_x^\zeta u+\frac{\beta a_1\delta_2}{\delta_2-1}\int (\nabla \varphi^{\frac{2(\delta_2-1)}{\delta_1-1}}\partial_x^\zeta Q_2(v))\cdot \partial_x^\zeta u \nonumber\\
  &-\sum_{j=1}^{3}\int \Big(\partial_x^\zeta\big(\sum_{j=1}^{3}A_j(V)\partial_j W\big)-A_j(V)\partial_j \partial_x^\zeta W\Big) \cdot \partial_x^\zeta W \nonumber\\
  & -a_1\alpha\int \Big( \partial_x^\zeta\big( (\varphi^2+\eta^2) L_1(u) \big)- (\varphi^2+\eta^2) L_1(\partial_x^\zeta u) \Big)\partial_x^\zeta u \nonumber\\
  & -a_1 \int \Big( \partial_x^\zeta\big( (\varphi^2+\eta^2)(\alpha+\beta\varphi^{\frac{2(\delta_2-\delta_1)}{\delta_1-1}})  L_2(u) \big)- (\varphi^2+\eta^2)(\alpha+\beta\varphi^{\frac{2(\delta_2-\delta_1)}{\delta_1-1}})  L_2(\partial_x^\zeta u) \Big)\partial_x^\zeta u \nonumber\\
  &+\frac{\alpha a_1\delta_1}{\delta_1-1}\int \Big( \partial_x^\zeta \big(\nabla \varphi^2 \cdot  Q_1(v)\big)- \nabla \varphi^2\cdot  Q_1(\partial_x^\zeta v)\Big)\cdot \partial_x^\zeta u \nonumber\\
  &+\frac{\beta a_1\delta_2}{\delta_2-1}\int \Big( \partial_x^\zeta \big(\nabla \varphi^{\frac{2(\delta_2-1)}{\delta_1-1}}\cdot  Q_2(v)\big)- \nabla \varphi^{\frac{2(\delta_2-1)}{\delta_1-1}}\cdot  Q_2(\partial_x^\zeta v)\Big)\cdot \partial_x^\zeta u \nonumber \\
  \triangleq &\sum_{i=1}^{10}\text{I}_i,  \label{13}
\end{align}
where we have used \eqref{92} in the first inequality.

When $|\zeta|\leq 1,$
\begin{align}
\mathrm{I}_1&=\frac{1}{2} \int (\partial_x^\zeta W)^\top \text{div}A(V) \partial_x^\zeta W  \nonumber\\
  &\leq C|\nabla V|_\infty|\partial_x^\zeta W|_2^2 \leq C\|V\|_3 |\partial_x^\zeta W|_2^2 \nonumber\\
  &\leq C c_3 |\partial_x^\zeta W|_2^2, \nonumber\\
\mathrm{I}_2&=-a_1\alpha\int\big(\nabla(\varphi^2+\eta^2)\cdot \nabla \partial_x^\zeta u \big)\cdot \partial_x^\zeta u  \nonumber\\
   & \leq C|\nabla \varphi|_\infty |\varphi \nabla \partial_x^\zeta u|_2|\partial_x^\zeta u|_2 \nonumber\\
   & \leq \frac{a_1 \alpha}{10} |\sqrt{\varphi^2+\eta^2}\nabla \partial_x^\zeta u|_2^2+C c_0^2|\partial_x^\zeta u|_2^2, \nonumber\\
\mathrm{I}_3&=-a_1 \int\Big(\nabla((\varphi^2+\eta^2)(\alpha+\beta\varphi^{\frac{2(\delta_2-\delta_1)}{\delta_1-1}})) \cdot \text{div} \partial_x^\zeta u \mathbb{I}_3 \Big)\cdot \partial_x^\zeta u \nonumber\\
& \leq C (|\sqrt{\varphi^2+\eta^2}\text{div}\partial_x^\zeta u|_2|\nabla \varphi|_\infty|\alpha+\beta\varphi^{2m}|_\infty\nonumber\\
&\quad +|\varphi^2+\eta^2|_\infty|\varphi|_\infty^{2m-2}|\sqrt{\varphi^2+\eta^2}\text{div}\partial_x^\zeta u|_2)|\partial_x^\zeta u|_2 \nonumber\\
&\leq \frac{a_1 \alpha}{10} |\sqrt{\varphi^2+\eta^2} \text{div} \partial_x^\zeta u|_2^2+C c_0^{4m+2}|\partial_x^\zeta u|_2^2, \nonumber\\
\mathrm{I}_4&=\frac{\alpha a_1\delta_1}{\delta_1-1}\int (\nabla \varphi^2 \partial_x^\zeta Q_1(v))\cdot \partial_x^\zeta u \nonumber\\
  &\leq C|\varphi|_\infty|\nabla \varphi|_\infty|\nabla \partial_x^\zeta v|_2|\partial_x^\zeta u|_2 \nonumber\\
  &\leq Cc_3^3|\partial_x^\zeta u|_2\leq  Cc_3^3|\partial_x^\zeta u|_2^2+Cc_3^3, \nonumber\\
\mathrm{I}_5&=\frac{\beta a_1\delta_2}{\delta_2-1}\int (\nabla \varphi^{\frac{2(\delta_2-1)}{\delta_1-1}}\partial_x^\zeta Q_2(v))\cdot \partial_x^\zeta u \nonumber\\ 
  &\leq C|\varphi |_\infty^{2m+1}|\nabla\varphi |_\infty|\nabla^2 v|_2|\partial_x^\zeta u|_2 \nonumber\\
  &\leq Cc_3^{2m+3}|\partial_x^\zeta u |_2^2+Cc_3^{2m+3}, \nonumber\\
\mathrm{I}_6&=-\sum_{j=1}^{3}\int \Big(\partial_x^\zeta\big(\sum_{j=1}^{3}A_j(V)\partial_j W\big)-A_j(V)\partial_j \partial_x^\zeta W\Big) \cdot \partial_x^\zeta W \nonumber \\
  &\leq C|\nabla V|_\infty |\nabla W|_2^2\leq C c_3|\nabla W|_2^2, \nonumber \\
\mathrm{I}_7&=-a_1\alpha \Big( \partial_x^\zeta\big( (\varphi^2+\eta^2) L_1(u) \big)- (\varphi^2+\eta^2) L_1(\partial_x^\zeta u) \Big)\partial_x^\zeta u \\
  &\leq C|\nabla \varphi|_\infty|\varphi L_1(u)|_2|\partial_x^\zeta u|_2\leq \frac{a_1 \alpha}{10} |\sqrt{\varphi^2+\eta^2}\nabla^2 u|_2^2+C c_0^2|\partial_x^\zeta u|_2^2, \nonumber \\
\mathrm{I}_8&=-a_1 \int \Big( \partial_x^\zeta\big( (\varphi^2+\eta^2)(\alpha+\beta\varphi^{\frac{2(\delta_2-\delta_1)}{\delta_1-1}})  L_2(u) \big)- (\varphi^2+\eta^2)(\alpha+\beta\varphi^{\frac{2(\delta_2-\delta_1)}{\delta_1-1}})  L_2(\partial_x^\zeta u) \Big)\partial_x^\zeta u \nonumber \\
&\leq  C(|\nabla \varphi|_\infty|\alpha+\beta \varphi^{2m}|_\infty+|\varphi^2+\eta^2|_\infty|\varphi|_\infty^{2m-2}|\nabla \varphi|_\infty)|\varphi\nabla\text{div}u|_2|\nabla u|_2 \nonumber \\
&\leq \frac{a_1 \alpha}{10} |\sqrt{\varphi^2+\eta^2} \nabla \text{div}u|_2^2+C c_0^{4m+2}|\nabla u|_2^2, \nonumber \\
\mathrm{I}_{9}&=\frac{\alpha a_1\delta_1}{\delta_1-1}\int \Big( \partial_x^\zeta \big(\nabla \varphi^2 \cdot  Q_1(v)\big)- \nabla \varphi^2\cdot  Q_1(\partial_x^\zeta v)\Big)\cdot \partial_x^\zeta u \nonumber \\
&\leq C(|\varphi|_\infty|\nabla v|_\infty|\nabla^2 \varphi|_2+|\nabla v|_\infty|\nabla \varphi|_3|\nabla \varphi|_6)|\partial_x^\zeta u|_2  \nonumber \\
&\leq Cc_3^3 |\partial_x^\zeta u|_2 \leq Cc_3^3+Cc_3^3|\partial_x^\zeta u|_2^2, \nonumber \\
\mathrm{I}_{10}&=  \frac{\beta a_1\delta_2}{\delta_2-1}\int \Big( \partial_x^\zeta \big(\nabla \varphi^{\frac{2(\delta_2-1)}{\delta_1-1}}\cdot  Q_2(v)\big)- \nabla \varphi^{\frac{2(\delta_2-1)}{\delta_1-1}}\cdot  Q_2(\partial_x^\zeta v)\Big)\cdot \partial_x^\zeta u \nonumber \\
&\leq C(|\varphi|_\infty^{2m+1}|\nabla^2\varphi|_3|\nabla v|_6+|\varphi|_\infty^{2m}|\nabla \varphi|_\infty^2|\nabla v|_2)|\nabla u|_2  \nonumber \\
&\leq Cc_3^{2m+3} |\nabla u|_2^2+Cc_3^{2m+3}. \nonumber
\end{align}

Then, it yields that
\begin{equation}
\begin{split}
  &\frac{1}{2} \frac{\mathrm{d}}{\mathrm{d}t}\int (\partial_x^\zeta W)^\top A_0 \partial_x^\zeta W+ \frac{1}{2} a_1\alpha |\sqrt{\varphi^2+\eta^2}\nabla^2 u|_2^2\\
  \leq &Cc_3^{4m+2}\|W\|_1^2+Cc_3^{4m+2}.
  \end{split}
\end{equation}

According to Gronwall's inequality, we have 
\begin{equation}
\begin{split}
  &\|W\|_1^2+ \int_0^t |\sqrt{\varphi^2+\eta^2}\nabla^2 u|_2^2\mathrm{d}s \\
  \leq & C(\|W_0\|_1^2+c_3^{4m+2} t) \exp(Cc_3^{4m+2} t)\leq C c_0^2,
  \end{split}
\end{equation}
for $0\leq t\leq T_2=\min\{T_1,(1+c_3)^{-4m-2}\}$.

\end{proof}

\begin{lemma}
Let $(\varphi, W)$ be the solutions to \eqref{5}.  Then 
\begin{equation*}
\begin{aligned}
&|W(t)|_{D^2}^2+\int_0^t |\sqrt{\varphi^2+\eta^2}\nabla^3 u|_2^2 \mathrm{d}s   \leq C c_0^2,\\
&\|\phi_t\|_1+|u_t|_2+\int_0^t|u_t|_{D^1}^2 \mathrm{d}s\leq Cc_3^{4m+4},
\end{aligned}
\end{equation*}
for $0 \leq t \leq T_3\triangleq \min\{T_2,(1+c_3)^{-4m-4}\}$.
\end{lemma}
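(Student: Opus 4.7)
\smallskip

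\textbf{Proof plan.} The strategy is a direct continuation of the $|\zeta|\le 1$ energy estimate of the previous lemma, now with $|\zeta|=2$, followed by reading off the $\phi_t$ and $u_t$ bounds directly from the equations. First I would apply $\partial_x^\zeta$ with $|\zeta|=2$ to $(5)_2$, take the $L^2$ inner product with $\partial_x^\zeta W$, and symmetrize exactly as before. This produces the same ten-term decomposition $\sum_{i=1}^{10}I_i$, together with the good dissipation
$$
a_1\alpha|\sqrt{\varphi^2+\eta^2}\,\nabla\partial_x^\zeta u|_2^2+a_1\int(\alpha+\beta\varphi^{2m})(\varphi^2+\eta^2)|\text{div}\,\partial_x^\zeta u|^2\,\mathrm{d}x
$$
on the left-hand side. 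The lower bound $\alpha+\beta\varphi^{2m}\ge \alpha/2>0$ from \eqref{92} is available on $[0,T_3]\subset[0,T^*]$ and again converts this into the clean dissipation $|\sqrt{\varphi^2+\eta^2}\,\nabla^3 u|_2^2$.

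Next I would estimate each $I_i$ at order two. The new commutator contributions involve products such as $\nabla^2 V\cdot\nabla^2 W$, $\nabla^2(\varphi^2+\eta^2)\cdot\nabla u$, $\nabla^2\varphi^{2m}\cdot\nabla v$, $\nabla \varphi^{2m}\cdot\nabla^2 v$, all of which are handled by Hölder/Gagliardo--Nirenberg inequalities from the preliminaries together with the bounds $\|\varphi(t)\|_3\le Cc_0$ from Lemma 3.1, $\|V(t)\|_3,\|\tilde\varphi(t)\|_3\le c_3$ and $\int_0^t|\tilde\varphi\nabla^4 v|_2^2\,\mathrm{d}s\le c_3^2$ from \eqref{26}, and the first-order bounds already obtained. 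Each highest-order contribution is split via Young's inequality between a small multiple of $|\sqrt{\varphi^2+\eta^2}\,\nabla^3 u|_2^2$ and a lower-order remainder. Collecting everything yields a differential inequality of the form
$$
\frac{\mathrm{d}}{\mathrm{d}t}|\partial_x^\zeta W|_2^2 + a_1\alpha|\sqrt{\varphi^2+\eta^2}\,\nabla^3 u|_2^2 \le Cc_3^{4m+4}\|W\|_2^2 + Cc_3^{4m+4},
$$
and an application of Gronwall on $[0,T_3]$ with $T_3=\min\{T_2,(1+c_3)^{-4m-4}\}$ produces the advertised bound $|W(t)|_{D^2}^2+\int_0^t|\sqrt{\varphi^2+\eta^2}\,\nabla^3 u|_2^2\,\mathrm{d}s\le Cc_0^2$.

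For the time derivative estimates I would read them off from the equation. The first row of $(5)_2$ gives $\phi_t=-v\cdot\nabla\phi-\tfrac{\gamma-1}{2}\tilde\phi\,\text{div}\,u$, and $\|\phi_t\|_1$ is then controlled in $H^1$ by a product estimate using $\|v\|_3,\|\tilde\phi\|_3\le c_3$ and the just-obtained $H^2$ bound on $\phi$. The second row, divided by $a_1$, expresses
$$
u_t = -v\cdot\nabla u - \tfrac{\gamma-1}{2a_1}\phi\nabla\phi + \alpha(\varphi^2+\eta^2)\Delta u + (\varphi^2+\eta^2)(\alpha+\beta\varphi^{2m})\nabla\text{div}\,u + (\text{source})/a_1,
$$
and $|u_t|_2$ is bounded pointwise in $t$ using $\|u\|_2\le Cc_0$, $\|\phi\|_2\le Cc_0$, $\|\varphi\|_3\le Cc_0$, and $\eta\le 1$ (so $\eta^2|\Delta u|_2\le|u|_{H^2}$ gives a uniform-in-$\eta$ contribution). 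Differentiating this identity once in space and squaring, the only term that is not pointwise-in-$t$ bounded is the highest-order piece $(\varphi^2+\eta^2)\nabla^3 u$, which is integrable in time via $\int_0^t|\sqrt{\varphi^2+\eta^2}\,\nabla^3 u|_2^2\,\mathrm{d}s\le Cc_0^2$ from step two; summing and taking $T_3$ small yields $\int_0^t|u_t|_{D^1}^2\,\mathrm{d}s\le Cc_3^{4m+4}$.

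The main technical obstacle is the control of the degenerate coefficient $\varphi^{2m}$ when its first and second derivatives enter the commutators $I_3,I_8,I_{10}$: expanding $\nabla\varphi^{2m}=2m\varphi^{2m-1}\nabla\varphi$ and $\nabla^2\varphi^{2m}=2m(2m-1)\varphi^{2m-2}(\nabla\varphi)^2+2m\varphi^{2m-1}\nabla^2\varphi$ would produce negative powers of $\varphi$ (which has no positive lower bound because of far-field vacuum) unless $2m-2\ge 0$. The hypothesis $\delta_2\ge\tfrac{5}{2}\delta_1-\tfrac{3}{2}$, equivalent to $m\ge 3/2$, gives $2m-1\ge 2$ and $2m-2\ge 1$, exactly preventing such negative powers and allowing every $I_i$ to be absorbed or closed by Gronwall. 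This is the parameter restriction that drives the choice of $T_3$ and the power $4m+4$.
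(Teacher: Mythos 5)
Your plan is correct and follows essentially the same route as the paper: the $|\zeta|=2$ energy estimate with the same ten-term decomposition, absorption of highest-order commutator terms into the dissipation $|\sqrt{\varphi^2+\eta^2}\nabla^3u|_2^2$ via \eqref{92} and Young's inequality, Gronwall on $[0,T_3]$, and then reading the $\phi_t$, $u_t$ bounds directly off the equations, with $\int_0^t|u_t|_{D^1}^2$ closed by the just-proved time integrability of $\sqrt{\varphi^2+\eta^2}\nabla^3u$. (Only minor remark: the paper invokes the restriction $m\geq\frac{3}{2}$ explicitly at the third-order stage rather than here, where $m\geq 1$ already suffices to avoid negative powers of $\varphi$ in $\mathrm{I}_3$, $\mathrm{I}_8$, $\mathrm{I}_{10}$.)
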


\begin{proof}
Now we consider the terms on the righthand side of \eqref{13} when $|\zeta|=2$. It follows from  lemma \ref{41}, \eqref{13},  H\"older's inequality and Young's inequality that 
\begin{align}
\mathrm{I}_1&=\frac{1}{2} \int (\partial_x^\zeta W)^\top \text{div}A(V) \partial_x^\zeta W  \nonumber\\
  &\leq C|\nabla V|_\infty|\partial_x^\zeta W|_2^2\leq C\|V\|_3 |\partial_x^\zeta W|_2^2 \nonumber\\
  &\leq C c_3 |\partial_x^\zeta W|_2^2, \nonumber\\
\mathrm{I}_2&=-a_1\alpha\int\big(\nabla(\varphi^2+\eta^2)\cdot \nabla \partial_x^\zeta u \big)\cdot \partial_x^\zeta u  \nonumber\\
   & \leq C|\nabla \varphi|_\infty |\varphi \nabla \partial_x^\zeta u|_2|\partial_x^\zeta u|_2 \nonumber\\
   & \leq \frac{a_1 \alpha}{10} |\sqrt{\varphi^2+\eta^2}\nabla \partial_x^\zeta u|_2^2+C c_0^2|\partial_x^\zeta u|_2^2, \nonumber\\
\mathrm{I}_3&=-a_1 \int\Big(\nabla((\varphi^2+\eta^2)(\alpha+\beta\varphi^{\frac{2(\delta_2-\delta_1)}{\delta_1-1}})) \cdot \text{div} \partial_x^\zeta u \mathbb{I}_3 \Big)\cdot \partial_x^\zeta u \nonumber\\
& \leq C \Big(|\sqrt{\varphi^2+\eta^2}\text{div}\partial_x^\zeta u|_2|\nabla \varphi|_\infty|\alpha+\beta\varphi^{2m}|_\infty\nonumber\\
&\quad +|\varphi^2+\eta^2|_\infty|\varphi|_\infty^{2m-2}|\sqrt{\varphi^2+\eta^2}\text{div}\partial_x^\zeta u|_2\Big)|\partial_x^\zeta u|_2 \nonumber\\
&\leq \frac{a_1 \alpha}{10} |\sqrt{\varphi^2+\eta^2} \text{div} \partial_x^\zeta u|_2^2+C c_0^{4m+2}|\partial_x^\zeta u|_2^2, \nonumber\\
\mathrm{I}_4&=\frac{\alpha a_1\delta_1}{\delta_1-1}\int (\nabla \varphi^2 \partial_x^\zeta Q_1(v))\cdot \partial_x^\zeta u \nonumber\\
  &\leq C|\varphi|_\infty|\nabla \varphi|_\infty|\nabla \partial_x^\zeta v|_2|\partial_x^\zeta u|_2 \nonumber\\
  &\leq Cc_3^3|\partial_x^\zeta u|_2\leq  Cc_3^3|\partial_x^\zeta u|_2^2+Cc_3^3, \nonumber\\
\mathrm{I}_5&=\frac{\beta a_1\delta_2}{\delta_2-1}\int (\nabla \varphi^{\frac{2(\delta_2-1)}{\delta_1-1}}\partial_x^\zeta Q_2(v))\cdot \partial_x^\zeta u \nonumber\\ 
  &\leq C|\varphi |_\infty^{2m+1}|\nabla\varphi |_\infty|\nabla^3 v|_2|\partial_x^\zeta u|_2 \nonumber\\
  &\leq Cc_3^{2m+3}|\partial_x^\zeta u |_2^2+Cc_3^{2m+3}, \nonumber\\
\mathrm{I}_6&=-\sum_{j=1}^{3}\int \Big(\partial_x^\zeta\big(\sum_{j=1}^{3}A_j(V)\partial_j W\big)-A_j(V)\partial_j \partial_x^\zeta W\Big) \cdot \partial_x^\zeta W \nonumber \\
  &\leq C|\nabla V|_\infty |\nabla W|_2^2\leq C c_3|\nabla W|_2^2, \nonumber \\
\mathrm{I}_7&=-a_1\alpha \Big( \partial_x^\zeta\big( (\varphi^2+\eta^2) L_1(u) \big)- (\varphi^2+\eta^2) L_1(\partial_x^\zeta u) \Big)\partial_x^\zeta u \nonumber \\
  &\leq C(|\nabla \varphi|_\infty^2|\nabla^2 u|_2^2+|\varphi |_3|\nabla^2 \varphi|_6|\nabla^2 u|_2^2+|\nabla \varphi|_\infty|\varphi \nabla^3 u|_2|\nabla^2 u|_2) \nonumber\\ 
  &\leq \frac{a_1 \alpha}{10} |\sqrt{\varphi^2+\eta^2}\nabla^3 u|_2^2+C c_0^2|\nabla^2 u|_2^2, \nonumber \\
\mathrm{I}_8&=-a_1 \int \Big( \partial_x^\zeta\big( (\varphi^2+\eta^2)(\alpha+\beta\varphi^{\frac{2(\delta_2-\delta_1)}{\delta_1-1}})  L_2(u) \big)- (\varphi^2+\eta^2)(\alpha+\beta\varphi^{\frac{2(\delta_2-\delta_1)}{\delta_1-1}})  L_2(\partial_x^\zeta u) \Big)\partial_x^\zeta u \nonumber \\
&\leq  C(|\nabla \varphi|_\infty|\alpha+\beta \varphi^{2m}|_\infty+|\sqrt{\varphi^2+\eta^2}|_\infty|\varphi|_\infty^{2m-1}|\nabla \varphi|_\infty)|\sqrt{\varphi^2+\eta^2} \nabla^2 \text{div}u|_2|\nabla^2 u|_2 \nonumber\\
&+|\nabla\varphi|_\infty^2|\varphi|_\infty^{2m}|\nabla ^2 u|_2^2+|\varphi|_\infty^{2m}|\nabla^2 \varphi|_3|\varphi \nabla^2 u|_6|\nabla^2 u|_2 \nonumber \\
&\leq \frac{a_1 \alpha}{10} |\sqrt{\varphi^2+\eta^2} \nabla^2 \text{div}u|_2^2+C c_0^{4m+2}|\nabla u|_2^2, \nonumber \\
\mathrm{I}_{9}&=\frac{\alpha a_1\delta_1}{\delta_1-1}\int \Big( \partial_x^\zeta \big(\nabla \varphi^2 \cdot  Q_1(v)\big)- \nabla \varphi^2\cdot  Q_1(\partial_x^\zeta v)\Big)\cdot \partial_x^\zeta u \nonumber \\
&\leq C(|\varphi|_\infty|\nabla v|_\infty|\nabla^2 \varphi|_2+|\nabla v|_\infty|\nabla \varphi|_3|\nabla \varphi|_6)|\partial_x^\zeta u|_2  \nonumber \\
&\leq Cc_3^3 |\partial_x^\zeta u|_2 \leq Cc_3^3+Cc_3^3|\partial_x^\zeta u|_2^2, \nonumber \\
\mathrm{I}_{10}&=  \frac{\beta a_1\delta_2}{\delta_2-1}\int \Big( \partial_x^\zeta \big(\nabla \varphi^{\frac{2(\delta_2-1)}{\delta_1-1}}\cdot  Q_2(v)\big)- \nabla \varphi^{\frac{2(\delta_2-1)}{\delta_1-1}}\cdot  Q_2(\partial_x^\zeta v)\Big)\cdot \partial_x^\zeta u \nonumber \\
&\leq C(|\varphi|_\infty^{2m}|\nabla \varphi|_\infty|\nabla^2 \varphi|_6|\nabla^2 v|_3|\nabla^2 u|_2+|\varphi|_\infty^{2m}|\nabla^3 \varphi|_2|\nabla^2 v|_3|\varphi \nabla^2 u|_6 \nonumber \\
&+|\varphi|_\infty^{2m-1}|\nabla \varphi|_\infty^3|\nabla^2 v|_2|\nabla^2 u|_2) \nonumber \\
&\leq Cc_3^{2m+3} |\nabla^2 u|_2^2+\frac{a_1\alpha}{10}|\sqrt{\varphi^2+\eta^2}\nabla^3 u|_2^2+Cc_3^{4m+4}. \nonumber
\end{align}

Then, it yields that
\begin{equation}
\begin{split}
  &\frac{1}{2} \frac{\mathrm{d}}{\mathrm{d}t}\int (\partial_x^\zeta W)^\top A_0 \partial_x^\zeta W+ \frac{1}{2} a_1\alpha |\sqrt{\varphi^2+\eta^2}\nabla^3 u|_2^2\\
  \leq &Cc_3^{4m+2}|W|_{D^2}^2+Cc_3^{4m+4}.
  \end{split}
\end{equation}

According to Gronwall's inequality, we have 
\begin{equation}
\begin{split}
  &|W|_{D^2}^2+ \int_0^t |\sqrt{\varphi^2+\eta^2}\nabla^3 u|_2^2\mathrm{d}s+\int_0^t|\sqrt{\varphi^2+\eta^2}\text{div} \partial_x^\zeta u|_2^2 \mathrm{d}s \\
  \leq & C(|W_0|_{D^2}^2+c_3^{4m+4} t) \exp(Cc_3^{4m+2} t)\leq C c_0^2,
  \end{split}
\end{equation}
for $0\leq t\leq T_3=\min\{T_2,(1+c_3)^{-4m-4}\}$.

From the relation that
\begin{equation}\label{14}
\phi_t=-v\cdot \nabla \phi-\frac{\gamma-1}{2}\tilde{\phi}\text{div} u,
\end{equation}
we have
\begin{equation}
\begin{split}
|\phi_t|_2&\leq C|v\cdot \nabla \phi+\tilde{\phi}\text{div} u|_2\\
&\leq C(|v|_\infty|\nabla \phi|_2+|\tilde{\phi}|_\infty|\text{div} u|_2)\\
&\leq Cc_2^2,\\
|\phi_t|_{D^1}&\leq C(|\nabla v|_\infty|\nabla \phi|_2+|v|_\infty|\nabla^2 \phi|_2+|\nabla \tilde{\phi}|_\infty|\nabla u|_2+|\tilde{\phi}|_\infty|\nabla^2 u|_2)\\
&\leq Cc_2^2.
\end{split}
\end{equation}

From the relation that
\begin{equation}\label{15}
\begin{split}
&u_t+v\cdot \nabla u+\frac{2A\gamma}{\gamma-1}\tilde{\phi}\nabla \phi-\alpha(\varphi^2+\eta^2)\triangle u-(\alpha+\beta\varphi^{2m})(\varphi^2+\eta^2)\nabla \text{div}u \\
=&\nabla\varphi^2\frac{\alpha\delta_1}{\delta_1-1}(\nabla v+(\nabla v)^\top)+\nabla\varphi^{2m+2}\frac{\beta\delta_2}{\delta_2-1}\text{div} v \mathbb{I}_3,
\end{split}
\end{equation}
we have
\begin{equation}
\begin{split}
|u_t|_2\leq& C (|v|_\infty|\nabla u|_2+|\tilde{\phi}|_\infty|\nabla \phi|_2+|\varphi^2+\eta^2|_\infty |\nabla^2 u |_2+|\varphi|_\infty^{2m+2}|\nabla^2 u|_2\\ &+|\varphi|_\infty|\nabla \varphi|_\infty|\nabla v|_2+|\varphi|_\infty^{2m+1}|\nabla \varphi|_\infty|\nabla v|_2)\\
 \leq & Cc_2^{2m+3}.
\end{split}
\end{equation}

For $|u_t|_{D^1}$,
\begin{equation}\label{16}
\begin{split}
|u_t|_{D^1}\leq &C(|\nabla v|_6 |\nabla u|_3+|v|_\infty|\nabla^2 u|_2+|\nabla \tilde{\phi}|_3|\nabla \phi|_6+|\tilde{\phi}|_\infty|\nabla^2 \phi|_2\\
&+|\sqrt{\varphi^2+\eta^2}|_\infty|\sqrt{\varphi^2+\eta^2}\nabla^3 u|_2(1+|\varphi|_\infty^{2m})+|\varphi|_\infty|\nabla \varphi|_\infty|\nabla^2 u|_2\\
&+(|\nabla \varphi|_\infty^2|\nabla v|_2+|\varphi|_\infty|\nabla^2 \varphi|_3|\nabla v|_6+|\varphi|_\infty|\nabla \varphi|_\infty|\nabla^2 v|_2)(1+|\varphi|_\infty^{2m}))\\
\leq & Cc_3^{2m+3}+Cc_3^{2m+1}|\sqrt{\varphi^2+\eta^2}\nabla^3 u|_2,
\end{split}
\end{equation}
which implies that
\begin{equation}
\begin{split}
&\int_{0}^{t} |u_t|_{D^1}^2 \mathrm{d}s\leq C\int_{0}^{t}(c_2^{4m+6}+c_3^{4m+2}|\sqrt{\varphi^2+\eta^2}\nabla^3 u|_2^2) \mathrm{d}s\\
\leq & Cc_3^{4m+4},
\end{split}
\end{equation}
for $0\leq t\leq T_2.$
\end{proof}

\begin{lemma}\label{17}
Let $(\varphi, W)$ be the solutions to \eqref{5}.  Then 
\begin{equation*}
\begin{aligned}
&|W(t)|_{D^3}^2+\int_0^t |\sqrt{\varphi^2+\eta^2}\nabla^4 u|_2^2 \mathrm{d}s \leq C c_0^2,\\
&|\phi_t|_{D^2}^2+|u_t|_{D^1}^2+\int_0^t|u_t|_{D^2}^2 \text{d}s\leq Cc_3^{4m+4},
\end{aligned}
\end{equation*}
for $0 \leq t \leq T_3$.
\end{lemma}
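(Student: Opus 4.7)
The plan is to follow the same template as the preceding two lemmas, now with $|\zeta|=3$. Apply $\partial_x^\zeta$ to \eqref{5}$_2$ to obtain the analogue of \eqref{10}, take the $L^2$ inner product with $\partial_x^\zeta W$, and integrate over $\mathbb{R}^3$. The symmetric hyperbolic part on the left yields $\tfrac{1}{2}\tfrac{\mathrm d}{\mathrm d t}\!\int (\partial_x^\zeta W)^\top A_0\,\partial_x^\zeta W$ up to the commutator $I_1$, while the elliptic operator contributes, by virtue of \eqref{92} and the ellipticity estimate in \eqref{9}, a coercive term bounded below by $a_1\alpha\,|\sqrt{\varphi^2+\eta^2}\nabla^4 u|_2^2$ modulo the source terms $I_2,I_3$ (coming from the product rule when commuting $\partial_x^\zeta$ past $\varphi^2+\eta^2$ and $\alpha+\beta\varphi^{2m}$) that need to be moved to the right-hand side.

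On the right one then has the ten integrals $I_1,\ldots,I_{10}$ with the same structure as in the $|\zeta|=2$ case. I would estimate each using Lemma \ref{41} (Gagliardo-Nirenberg), the Moser product and commutator calculus, and the already established bounds $\|\varphi\|_3\le Cc_0$, $\|\varphi_t\|_2\le Cc_3^2$ from Lemma \ref{23}, the hyperbolic-parabolic bounds on $W$ from the previous two lemmas, and the $V$-assumption \eqref{26}. The generic pattern is to expand $\partial_x^\zeta(fg)-f\,\partial_x^\zeta g$ into finitely many pieces, put the factor carrying the most derivatives in $L^2$, the other factors in $L^\infty$ or $L^3/L^6$, and then apply $|h|_\infty\le C\|h\|_2^{1/2}\|h\|_3^{1/2}$ or $|\nabla h|_3\le C\|h\|_2^{1/2}\|h\|_3^{1/2}$ to trade half a derivative against $c_3^{1/2}$.

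The main obstacle is in $I_7$ and $I_8$ (and $I_{10}$), which involve $\partial_x^\zeta$ falling on $(\varphi^2+\eta^2)$ or $\varphi^{2m+2}$ together with $L_1(u)$ or $L_2(u)$. In these integrals exactly one term carries four derivatives of $u$, coupled with one derivative of $\varphi^2$; written as $|\nabla\varphi|_\infty\,|\sqrt{\varphi^2+\eta^2}\nabla^4 u|_2\,|\nabla^3 u|_2$ it is controlled by Cauchy-Schwarz and absorbed into the left-hand side coercive term by choosing a small constant (the $\tfrac{a_1\alpha}{10}$ trick already used in the previous lemma). All remaining pieces of $I_7,I_8,I_{10}$ distribute derivatives so that the top-order factor is $|\nabla^3\varphi|_2$ or $|\nabla^2\varphi|_6$, which is absorbed by $\|\varphi\|_3\le Cc_0$, and the prefactors give at most $c_3^{4m+4}$. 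The remaining $I_1,I_4,I_5,I_6,I_9$ are lower-order and handled exactly as in the previous lemma with one more derivative, producing only polynomial-in-$c_3$ losses.

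Collecting, one obtains
\begin{equation*}
\tfrac{\mathrm d}{\mathrm d t}\!\!\int (\partial_x^\zeta W)^\top A_0\,\partial_x^\zeta W
+ a_1\alpha\,|\sqrt{\varphi^2+\eta^2}\nabla^4 u|_2^2
\le C c_3^{4m+2}\,|W|_{D^3}^2 + C c_3^{4m+4},
\end{equation*}
and Gronwall's inequality together with the choice $T_3\le (1+c_3)^{-4m-4}$ yields $|W(t)|_{D^3}^2+\int_0^t|\sqrt{\varphi^2+\eta^2}\nabla^4 u|_2^2\,\mathrm ds\le Cc_0^2$. Finally, to extract the bounds on the time derivatives, I would apply $\nabla^2$ to \eqref{14} to estimate $|\phi_t|_{D^2}$ by $\|v\|_3\|\phi\|_3+\|\tilde\phi\|_3\|u\|_3$, and apply $\nabla$ to \eqref{15} and argue exactly as in \eqref{16} but with one more derivative: the term $(\varphi^2+\eta^2)\nabla^4 u$ is bounded, after $|\cdot|_2$, by $|\sqrt{\varphi^2+\eta^2}|_\infty\,|\sqrt{\varphi^2+\eta^2}\nabla^4 u|_2$, which integrated in time delivers the desired $Cc_3^{4m+4}$ bound on $\int_0^t|u_t|_{D^2}^2\,\mathrm ds$, while the pointwise $|u_t|_{D^1}$ bound follows directly from the already established $L^\infty_t$ control of $|W|_{D^3}$.
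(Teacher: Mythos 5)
Your plan follows the paper's proof almost verbatim in structure: the same decomposition into $\mathrm{I}_1,\dots,\mathrm{I}_{10}$ from \eqref{13} with $|\zeta|=3$, the same absorption of the single top-order term $|\nabla\varphi|_\infty|\sqrt{\varphi^2+\eta^2}\nabla^4 u|_2|\nabla^3 u|_2$ into the coercive left-hand side via Young's inequality, Gronwall on $T_3$, and the same equations \eqref{14}--\eqref{16} for the time derivatives. So the route is right.

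There is, however, one genuine gap: you never invoke, and your blanket claim about $\mathrm{I}_7,\mathrm{I}_8,\mathrm{I}_{10}$ silently contradicts, the hypothesis $m=\frac{\delta_2-\delta_1}{\delta_1-1}\ge\frac{3}{2}$. This lemma is exactly where that restriction is used (the paper flags it at the end of the $\mathrm{I}_8$ estimate). The issue is not the distribution of derivatives but the distribution of the \emph{weight}. Since $\nabla^3 u$ is only in $L^2$ and $\nabla^2 u$ only in $L^3\cap L^6$ uniformly, while the $L^6$ control of third derivatives of $u$ is only available in the weighted form $|\sqrt{\varphi^2+\eta^2}\nabla^3 u|_6\lesssim|\nabla\varphi|_\infty|\nabla^3 u|_2+|\sqrt{\varphi^2+\eta^2}\nabla^4 u|_2$, you must peel factors of $\varphi$ (or $\sqrt{\varphi^2+\eta^2}$) off the coefficient $(\varphi^2+\eta^2)(\alpha+\beta\varphi^{2m})$ and reattach them to the velocity factors. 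After $\partial_x^\zeta$ with $|\zeta|=3$ hits $\varphi^{2m}$, the commutator in $\mathrm{I}_8$ produces pieces whose residual coefficient is $\varphi^{2m-3}$ (e.g. the term the paper bounds by $|\varphi|_\infty^{2m-3}|\nabla^3\varphi|_2|\sqrt{\varphi^2+\eta^2}|_\infty|\nabla^2 u|_3|\sqrt{\varphi^2+\eta^2}\nabla^3 u|_6$). If $2m-3<0$ this factor blows up near the vacuum set $\{\varphi=0\}$ and cannot be bounded by any power of $c_0$, so the estimate collapses; it is precisely $m\ge\frac{3}{2}$, i.e. $\delta_2\ge\frac{5}{2}\delta_1-\frac{3}{2}$, that keeps every residual exponent of $\varphi$ nonnegative. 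Your writeup should make this bookkeeping explicit rather than asserting that "the prefactors give at most $c_3^{4m+4}$." The rest of the plan (including the coefficient $c_3^{4m+2}$ versus the paper's $c_3^{2m+3}$ in the Gronwall factor, which is immaterial given $T_3\le(1+c_3)^{-4m-4}$, and the treatment of $\phi_t$, $u_t$) matches the paper.
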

\begin{proof}
Now we consider the terms on the righthand side of \eqref{13} when $|\zeta|=3$. It follows from  lemma \ref{41}, \eqref{13},  H\"older's inequality and Young's inequality that 
\begin{align}
\mathrm{I}_1&=\frac{1}{2} \int (\partial_x^\zeta W)^\top \text{div}A(V) \partial_x^\zeta W  \nonumber\\
  &\leq C|\nabla V|_\infty|\partial_x^\zeta W|_2^2 \leq C\|V\|_3 |\partial_x^\zeta W|_2^2 \nonumber\\
  &\leq C c_3 |\partial_x^\zeta W|_2^2, \nonumber\\
\mathrm{I}_2&=-a_1\alpha\int\big(\nabla(\varphi^2+\eta^2)\cdot \nabla \partial_x^\zeta u \big)\cdot \partial_x^\zeta u  \nonumber\\
   & \leq C|\nabla \varphi|_\infty |\varphi \nabla \partial_x^\zeta u|_2|\partial_x^\zeta u|_2 \nonumber\\
   & \leq \frac{a_1 \alpha}{10} |\sqrt{\varphi^2+\eta^2}\nabla \partial_x^\zeta u|_2^2+C c_0^2|\partial_x^\zeta u|_2^2, \nonumber\\
\mathrm{I}_3&=-a_1 \int\Big(\nabla((\varphi^2+\eta^2)(\alpha+\beta\varphi^{\frac{2(\delta_2-\delta_1)}{\delta_1-1}})) \cdot \text{div} \partial_x^\zeta u \mathbb{I}_3 \Big)\cdot \partial_x^\zeta u \nonumber\\
& \leq C (|\sqrt{\varphi^2+\eta^2}\text{div}\partial_x^\zeta u|_2|\nabla \varphi|_\infty|\alpha+\beta\varphi^{2m}|_\infty \nonumber\\
&+|\varphi^2+\eta^2|_\infty|\varphi|_\infty^{2m-2}|\sqrt{\varphi^2+\eta^2}\text{div}\partial_x^\zeta u|_2)|\partial_x^\zeta u|_2) \nonumber\\
&\leq \frac{a_1 \alpha}{10} |\sqrt{\varphi^2+\eta^2} \text{div} \partial_x^\zeta u|_2^2+C c_0^{4m+2}|\partial_x^\zeta u|_2^2, \nonumber\\
\mathrm{I}_4&=\frac{\alpha a_1\delta_1}{\delta_1-1}\int (\nabla \varphi^2 \partial_x^\zeta Q_1(v))\cdot \partial_x^\zeta u  \label{H3}\\
  &\leq C|\varphi|_\infty|\nabla \varphi|_\infty|\nabla \partial_x^\zeta v|_2|\partial_x^\zeta u|_2 \nonumber\\
  &\leq Cc_3^3|\partial_x^\zeta u|_2\leq  Cc_3^3|\partial_x^\zeta u|_2^2+Cc_3^3, \nonumber\\
\mathrm{I}_5&=\frac{\beta a_1\delta_2}{\delta_2-1}\int (\nabla \varphi^{\frac{2(\delta_2-1)}{\delta_1-1}}\partial_x^\zeta Q_2(v))\cdot \partial_x^\zeta u \nonumber\\ 
  &\leq C(|\varphi |_\infty^{2m}|\nabla^2\varphi |_3|\nabla^3 v|_2|\varphi\nabla^3 u|_2+|\varphi|_\infty^{2m}|\nabla \varphi|_\infty^2|\nabla^3 v|_2|\nabla^3 u|_2+|\varphi|_\infty^{2m}|\nabla \varphi|_\infty|\nabla^3 v|_2|\varphi \nabla^4 u|_2) \nonumber\\
  &\leq \frac{a_1\alpha}{10}|\sqrt{\varphi^2+\eta^2}\nabla^4 u|_2^2+Cc_3^{2m+3}|\nabla^3 u|_2^2+Cc_3^{4m+4}, \nonumber\\
\mathrm{I}_6&=-\sum_{j=1}^{3}\int \Big(\partial_x^\zeta\big(\sum_{j=1}^{3}A_j(V)\partial_j W\big)-A_j(V)\partial_j \partial_x^\zeta W\Big) \cdot \partial_x^\zeta W \nonumber \\
  &\leq C|\nabla V|_\infty |\nabla W|_2^2\leq C c_3|\nabla W|_2^2, \nonumber \\
\mathrm{I}_7&=-a_1\alpha \Big( \partial_x^\zeta\big( (\varphi^2+\eta^2) L_1(u) \big)- (\varphi^2+\eta^2) L_1(\partial_x^\zeta u) \Big)\partial_x^\zeta u \nonumber \\
  &\leq C(|\nabla \varphi|_\infty|\nabla^2 \varphi|_3|\nabla^2 u|_6|\nabla^3 u|_2+|\nabla^3\varphi|_2|\nabla^2 u|_3|\varphi\nabla^3 u|_6+|\nabla \varphi|_\infty^2|\nabla^3 u|_2^2 \nonumber\\ 
  & \quad + |\nabla^2\varphi|_3|\varphi \nabla^3 u|_6|\nabla^3 u|_2+|\nabla\varphi|_\infty|\varphi \nabla^4 u|_2|\nabla^3 u|_2)       \nonumber\\ 
  &\leq \frac{a_1 \alpha}{10} |\sqrt{\varphi^2+\eta^2}\nabla^4 u|_2^2+C c_0^2|\nabla^3 u|_2^2, \nonumber \\
\mathrm{I}_8&=-a_1 \int \Big( \partial_x^\zeta\big( (\varphi^2+\eta^2)(\alpha+\beta\varphi^{\frac{2(\delta_2-\delta_1)}{\delta_1-1}})  L_2(u) \big)- (\varphi^2+\eta^2)(\alpha+\beta\varphi^{\frac{2(\delta_2-\delta_1)}{\delta_1-1}})  L_2(\partial_x^\zeta u) \Big)\partial_x^\zeta u \nonumber \\
&\leq  C(|\sqrt{\varphi^2+\eta^2}\nabla^4 u|_2|\nabla \varphi|_\infty|\varphi|_\infty^{2m}|\nabla^3 u|_2+|\nabla \varphi|_\infty^2|\varphi|_\infty^{2m}|\nabla^3 u|_2^2 \nonumber\\
&+|\nabla^2 \varphi|_3|\varphi|_\infty^{2m}|\varphi \nabla^3 u|_6|\nabla^3 u|_2+|\nabla^2 \varphi|_3|\varphi|_\infty^{2m-1}|\nabla \varphi|_\infty^2|\nabla^2 u|_6|\nabla^3 u|_2\nonumber\\
&+|\varphi|_\infty^{2m}|\nabla^3 \varphi|_2|\nabla^2 u|_3|\varphi \nabla^3 u|_6+|\varphi|_\infty^{2m}|\nabla \varphi|_\infty|\nabla^2 \varphi|_3|\nabla^2 u|_6| \nabla^3 u|_2\nonumber\\
&+|\varphi|_\infty^{2m-1}|\nabla \varphi|_\infty^2|\nabla \varphi|_3|\nabla^2 u|_6| \nabla^3 u|_2+|\varphi|_\infty^{2m-3}|\nabla^3 \varphi|_2|\sqrt{\varphi^2+\eta^2}|_\infty|\nabla^2 u|_3|\sqrt{\varphi^2+\eta^2} \nabla^3 u|_6)\nonumber\\
&\leq \frac{a_1\alpha}{10}|\sqrt{\varphi^2+\eta^2}\nabla^4 u|_2^2+Cc_0^{4m+2}|\nabla^3 u|_2^2, \nonumber\\
\mathrm{I}_{9}&=\frac{\alpha a_1\delta_1}{\delta_1-1}\int \Big( \partial_x^\zeta \big(\nabla \varphi^2 \cdot  Q_1(v)\big)- \nabla \varphi^2\cdot  Q_1(\partial_x^\zeta v)\Big)\cdot \partial_x^\zeta u \nonumber \\
&\leq C(|\varphi|_\infty|\nabla v|_\infty|\nabla^2 \varphi|_2+|\nabla v|_\infty|\nabla \varphi|_3|\nabla \varphi|_6)|\partial_x^\zeta u|_2  \nonumber \\
&\leq Cc_3^3 |\partial_x^\zeta u|_2 \leq Cc_3^3+Cc_3^3|\partial_x^\zeta u|_2^2, \nonumber \\
\mathrm{I}_{10}&=  \frac{\beta a_1\delta_2}{\delta_2-1}\int \Big( \partial_x^\zeta \big(\nabla \varphi^{\frac{2(\delta_2-1)}{\delta_1-1}}\cdot  Q_2(v)\big)- \nabla \varphi^{\frac{2(\delta_2-1)}{\delta_1-1}}\cdot  Q_2(\partial_x^\zeta v)\Big)\cdot \partial_x^\zeta u \nonumber \\
&\leq C(|\varphi|_\infty^{2m}|\nabla \varphi|_\infty^2|\nabla^3 v|_2|\nabla^3 u|_2+|\varphi|_\infty^{2m-2}|\nabla^2 \varphi|_3|\nabla^3 v|_6|\varphi \nabla^3 u|_6 \nonumber \\
&+|\varphi|_\infty^{2m-1}|\nabla \varphi|_\infty^2|\nabla \varphi|_3|\nabla^2 v|_6|\nabla^3 u|_2+|\varphi|_\infty^{2m}|\nabla \varphi|_\infty|\nabla^2 \varphi|_3|\nabla^2 v|_6|\nabla^3 u|_2 \nonumber \\
&+|\varphi|_\infty^{2m}|\nabla^3 \varphi|_2|\nabla^2 v|_3|\varphi \nabla^3 u|_6+|\varphi|_\infty^{2m-2}|\nabla \varphi|_\infty^3|\nabla \varphi|_2|\nabla  v|_\infty|\nabla^3 u|_2 \nonumber \\
&+|\varphi|_\infty^{2m-1}|\nabla \varphi|_\infty^2|\nabla^2 \varphi|_3|\nabla  v|_6|\nabla^3 u|_2+|\varphi|_\infty^{2m}|\nabla^2 \varphi|_3|\nabla^2 \varphi|_6|\nabla  v|_\infty|\nabla^3 u|_2\nonumber \\
&+|\varphi|_\infty^{2m}|\nabla \varphi|_\infty|\nabla^3 \varphi|_2|\nabla  v|_\infty|\nabla^3 u|_2) \nonumber \\
&\leq Cc_3^{2m+3} |\nabla^3 u|_2^2+\frac{a_1\alpha}{10}|\sqrt{\varphi^2+\eta^2}\nabla^4 u|_2^2+Cc_3^{4m+4}, \nonumber
\end{align}
where estimates in $\mathrm{I}_8$ use the condition that $m\geq\frac{3}{2}$, i.e.,$\delta_2\geq\frac{5}{2}\delta_1-\frac{3}{2}$ .

Then, it yields that
\begin{equation}
\begin{split}
  &\frac{1}{2} \frac{\mathrm{d}}{\mathrm{d}t}\int (\partial_x^\zeta W)^\top A_0 \partial_x^\zeta W+ \frac{1}{2} a_1\alpha |\sqrt{\varphi^2+\eta^2}\nabla^4 u|_2^2\\
  \leq &Cc_3^{2m+3}|W|_{D^3}^2+Cc_3^{4m+4}.
  \end{split}
\end{equation}

According to Gronwall's inequality, we have 
\begin{equation}
\begin{split}
  &|W|_{D^3}^2+ \int_0^t |\sqrt{\varphi^2+\eta^2}\nabla^4 u|_2^2\mathrm{d}s  \\
  \leq & C(|W_0|_{D^2}^2+c_3^{4m+4} t) \exp(Cc_3^{2m+3} t)\leq C c_0^2,
  \end{split}
\end{equation}
for $0\leq t\leq T_3=\min\{T_2,(1+c_3)^{-4m-4}\}$.

For $|\phi_t|_{D^2}$, from \eqref{14}, we have
\begin{equation}
\begin{split}
|\phi_t|_{D^2}\leq & C(|\nabla^2 v|_3|\nabla \phi|_6+|\nabla v|_3|\nabla^2\phi|_6+|v|_\infty|\nabla^3 \phi|_2\\
&+|\nabla^2\tilde{\phi}|_3|\nabla u|_6+|\nabla\tilde{\phi}|_3|\nabla^2 u|_6+|\tilde{\phi}|_\infty|\nabla^3 u|_2)\\
\leq &Cc_3^2.
\end{split}
\end{equation}

It follows from \eqref{16} that
\begin{equation}
\begin{split}
|u_t|_{D^1} \leq  Cc_3^{2m+2}+C|\sqrt{\varphi^2+\eta^2}\nabla^3 u|_2\leq  Cc_2^{2m+2}.
\end{split}
\end{equation}

For $|u_t|_{D^2}$, from \eqref{15}, we have
\begin{equation}
\begin{split}
|u_t|_{D^2}\leq & C\Big(\|v\|_3\|u\|_3(1+|\varphi|_\infty^{2m})+\|\tilde{\phi}\|_3\|\phi\|_3+\|\varphi\|_3\|u\|_3+\|\varphi\|_3^2\|u\|_3\\
&+|\varphi|_\infty|\sqrt{\varphi^2+\eta^2}\nabla^4 u|_2(1+|\varphi|_\infty^{2m})\Big)\\
\leq & Cc_3^{2m+2}+Cc_3^{2m+1}|\sqrt{\varphi^2+\eta^2}\nabla^4 u|_2,
\end{split}
\end{equation}
which implies that 
\begin{equation}
\begin{split}
\int_{0}^{t} |u_t|_{D^2}^2 \mathrm{d}s\leq &C\int_{0}^{t} \Big(c_3^{4m+4}+c_3^{4m+2}|\sqrt{\varphi^2+\eta^2}\nabla^4 u|_2^2\Big)\\
\leq & Cc_3^{4m+4},
\end{split}
\end{equation}
for $0\leq t\leq T_3.$

\end{proof}

Combining the estimates obtained in Lemmas \ref{23}-\ref{17}, we have 
\begin{equation}
\begin{split}
&\|\varphi\|_3^2\leq Cc_0^2,\quad \|\varphi_t\|_2\leq Cc_3^2, \\
& \|W\|_1^2+\int_{0}^{t}|\varphi\nabla^2 u|_2^2\mathrm{d}s  \leq Cc_0^2,\\
& |W|_{D^2}^2+\int_{0}^{t}|\varphi\nabla^3 u|_2^2\mathrm{d}s  \leq Cc_0^2,\\
&\|\phi_t\|_1^2+|u_t|_2^2+\int_{0}^{t}|u_t|_{D^1}^2 \mathrm{d}s \leq Cc_3^{4m+4},\\
&|W|_{D^3}^2+\int_{0}^{t}|\varphi\nabla^4 u|_2^2\mathrm{d}s  \leq Cc_0^2,\\
&\|\phi_t\|_{D^2}^2+|u_t|_{D^1}^2+\int_{0}^{t}|u_t|_{D^2}^2 \mathrm{d}s \leq Cc_3^{4m+4},
\end{split}
\end{equation}
for $0\leq t\leq T_3=\min\{T_2,(1+c_3)^{-4m-4}\}.$

Therefore, if we define the constants $c_i(i=1,2,3)$ and $T^{**}$ by 
\begin{equation}\label{82}
c_1=c_2=c_3=C^{\frac{1}{2}}c_0, \quad T^{**}=\min\{T,(1+c_3)^{-4m-4}\},
\end{equation}
then we deduce that 
\begin{equation}\label{20}
\begin{split}
&\sup_{0 \leq t\leq T^{**}}  \|\varphi(t)\|_1^2  +\|\phi(t)\|_1^2+\|u(t)\|_1^2+\int_{0}^{T^{**}}|\varphi \nabla^2 u|_2^2 \mathrm{d}t\leq c_1^2,\\
&\sup_{0 \leq t\leq T^{**}}  |\varphi(t)|_{D^2}^2  +\|\phi(t)\|_{D^2}^2+\|u(t)\|_{D^2}^2+\int_{0}^{T^{**}}|\varphi \nabla^3 u|_2^2 \mathrm{d}t\leq c_2^2,\\
&\sup_{0 \leq t\leq T^{**}}  |\varphi(t)|_{D^3}^2   +\|\phi(t)\|_{D^3}^2+\|u(t)\|_{D^3}^2+\int_{0}^{T^{**}}|\varphi \nabla^4 u|_2^2 \mathrm{d}t\leq c_3^2,\\
&\sup_{0 \leq t\leq T^{**}} \|u_t\|_1^2+\|\phi_t\|_2^2+\|\varphi_t\|_2^2+\int_{0}^{T^{**}}|u_t|_{D^2}^2\mathrm{d}t\leq c_3^{4m+4}.
\end{split}
\end{equation}

\subsection{Passing to the limit as $\eta\rightarrow 0$}\label{83}
Now we consider the following systems when $\eta\rightarrow 0$:
\begin{equation}\label{19}
\left\{\begin{split}
&\varphi_t+v\cdot \nabla \varphi+\frac{\delta_1-1}{2} \tilde{\varphi}  \text{div}v=0, \\
&A_0 W_t+ \sum_{j=1}^{3}A_j(V)\partial_j W+\varphi^2( \alpha L_1(u)+(\alpha+\beta\varphi^{\frac{2(\delta_2-\delta_1)}{\delta_1-1}}) L_2(u)) \\
=&\mathbb{Q}_1(V)H(\varphi)+\mathbb{Q}_2(V)H(\varphi^{\frac{\delta_2-1}{\delta_1-1}}), \\
&(\varphi, W)|_{t=0}=\left(\varphi_0(x), W_0(x)\right), \quad x \in \mathbb{R}^3, \\
&(\varphi, W) \rightarrow(0,0), \quad \text { as } \quad|x| \rightarrow+\infty, \quad t \geq 0.
\end{split}\right.
\end{equation}

\begin{lemma}\label{85} Assume $\left(\varphi_0, W_0\right)$ satisfy \eqref{18}. Then there exists a time $T^{**}>0$  and a unique strong solution $(\varphi,W)$ in $\left[0, T^{**}\right] \times \mathbb{R}^3$ to \eqref{19} such that
\begin{equation}
\begin{aligned}
& \varphi \in C\left(\left[0, T^{**}\right] ; H^3\right),  \quad  \phi \in C\left(\left[0, T^{**}\right] ; H^3\right), \\
& u \in C\left(\left[0, T^{**}\right] ; H^{s^{\prime}}\right) \cap L^{\infty}\left(\left[0, T^{**}\right] ; H^3\right), \quad s^{\prime} \in[2,3), \\
& \varphi \nabla^4 u \in L^2\left(\left[0, T^{**}\right] ; L^2\right), \quad u_t \in C\left(\left[0, T^{**}\right] ; H^1\right) \cap L^2\left(\left[0, T^{**}\right] ; D^2\right).
\end{aligned}
\end{equation}
 Moreover, $(\varphi, W)$ also satisfies the a priori estimates $\eqref{20}$.
\end{lemma}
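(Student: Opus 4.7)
The plan is to obtain the solution of \eqref{19} as a weak-$*$ limit of the solutions $(\varphi^\eta, W^\eta)$ of the regularized linearized system \eqref{5} furnished by Lemma \ref{21}, and then to upgrade this limit to the regularity and estimates claimed in \eqref{20}. First, I would record that, by the uniform bounds \eqref{20} (which hold with constants \emph{independent} of $\eta \in (0,1]$), the family $\{(\varphi^\eta, \phi^\eta, u^\eta)\}_{\eta \in (0,1]}$ is bounded in $L^\infty([0,T^{**}];H^3)$, while $(\varphi^\eta_t, \phi^\eta_t) \in L^\infty([0,T^{**}];H^2)$ and $u^\eta_t \in L^\infty([0,T^{**}];H^1) \cap L^2([0,T^{**}];D^2)$ uniformly. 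Banach--Alaoglu yields a subsequence (still indexed by $\eta$) and a limit $(\varphi, \phi, u)$ with the same uniform bounds in the weak-$*$ sense, and weak lower semicontinuity of norms immediately transfers \eqref{20} to $(\varphi, \phi, u)$.

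Next, I would use Lemma \ref{22} (Aubin--Lions) to turn weak convergence into strong convergence in intermediate spaces. Concretely, from the uniform bound on $(\varphi^\eta, \phi^\eta, u^\eta)$ in $L^\infty([0,T^{**}];H^3)$ together with the uniform bound on the time derivatives in $L^2([0,T^{**}];H^1)$, we obtain strong convergence
\begin{equation*}
(\varphi^\eta, \phi^\eta, u^\eta) \to (\varphi, \phi, u) \quad \text{in } C([0,T^{**}]; H^{s'}_{\mathrm{loc}}) \text{ for any } s' \in [2,3),
\end{equation*}
after a standard localization argument to handle the unbounded domain $\mathbb{R}^3$. Strong convergence in $H^{s'}_{\mathrm{loc}}$ together with uniform $H^3$ bounds implies $L^\infty_{\mathrm{loc}}$ convergence of $\varphi^\eta$ and its first two derivatives, which is enough to pass to the limit in all the nonlinear coefficients $\varphi^{2m}, \varphi^{\frac{\delta_2-1}{\delta_1-1}}$ etc. The artificial viscosity terms $\eta^2 L_1(u^\eta)$ and $\eta^2 L_2(u^\eta)$ vanish in the sense of distributions because $\nabla^2 u^\eta$ is uniformly bounded in $L^\infty_t L^2_x$ and $\eta^2 \to 0$; combining this with weak convergence of $\nabla^2 u^\eta$ and strong convergence of $\varphi^\eta$, we may pass to the limit term-by-term in \eqref{5} to recover \eqref{19}.

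The main obstacle is establishing the degenerate weighted bound $\varphi \nabla^4 u \in L^2([0,T^{**}]; L^2)$ in the limit, since $\sqrt{(\varphi^\eta)^2 + \eta^2}\, \nabla^4 u^\eta$ is only bounded in $L^2_{t,x}$ uniformly in $\eta$. I would handle this by noting that $\sqrt{(\varphi^\eta)^2+\eta^2} \to \varphi$ strongly in $C([0,T^{**}]; L^\infty_{\mathrm{loc}})$ (consequence of strong convergence of $\varphi^\eta$ in $H^{s'}$ with $s' > 3/2$), while $\nabla^4 u^\eta \rightharpoonup \nabla^4 u$ weakly in $L^2_{t,x}$ by the uniform $L^\infty_t H^3_x$ bound. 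Then by weak lower semicontinuity applied to the product (via a cutoff and a convexity argument on the weighted norm) one gets
\begin{equation*}
\int_0^{T^{**}} |\varphi \nabla^4 u|_2^2 \, \mathrm{d}s \leq \liminf_{\eta \to 0} \int_0^{T^{**}} |\sqrt{(\varphi^\eta)^2+\eta^2}\, \nabla^4 u^\eta|_2^2 \, \mathrm{d}s \leq c_3^2.
\end{equation*}

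Finally, I would recover the time-continuity statements and uniqueness. The time continuity $\varphi, \phi \in C([0,T^{**}]; H^3)$ follows from the transport equation and Lemma \ref{106} together with the standard argument of Majda. The continuity $u \in C([0,T^{**}]; H^{s'})$ comes from the uniform $L^\infty_t H^3$ bound on $u$ together with $u_t \in L^2_t D^2$ and interpolation. The claim $u_t \in C([0,T^{**}]; H^1) \cap L^2([0,T^{**}]; D^2)$ is inherited from \eqref{20}. Uniqueness is proved by a standard energy estimate on the difference of two solutions: taking the difference of \eqref{19} gives a linear system in which the leading-order hyperbolic part is symmetric and the elliptic part has a nonnegative weight $\varphi^2$, so a Gr\"onwall argument in $L^2$ (using the $L^\infty$ bounds on $\nabla \varphi, \nabla V, v$, etc.) closes immediately and gives uniqueness in the class \eqref{88}.
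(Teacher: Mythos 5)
Your proposal follows essentially the same route as the paper: uniform-in-$\eta$ bounds from \eqref{20}, Aubin--Lions compactness on balls $B_R$ plus weak/weak-$*$ limits, passage to the limit in the distributional formulation (with the artificial viscosity terms dying as $\eta^2\to 0$), weak lower semicontinuity for the degenerate term $\varphi\nabla^4 u$, interpolation for $u\in C([0,T^{**}];H^{s'})$, and an $L^2$ energy/Gr\"onwall argument for uniqueness (noting, as you implicitly do, that $\bar\varphi=0$ is immediate since the $\varphi$-equation is driven only by the known data $v,\tilde\varphi$).

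One step is not justified as written: you claim $u_t\in C([0,T^{**}];H^1)$ is ``inherited from \eqref{20}'', but \eqref{20} only gives $\sup_{0\le t\le T^{**}}\|u_t\|_1^2\le c_3^{4m+4}$, i.e.\ an $L^\infty$-in-time bound, and the weak-$*$ limit of the $u_t^\eta$ retains only that. The paper instead recovers the continuity of $u_t$ from the equation $\eqref{19}_2$ itself, after first establishing $\varphi,\phi\in C([0,T^{**}];H^3)$ and $u\in C([0,T^{**}];H^{s'})$, so that every term on the right-hand side of the formula for $u_t$ is continuous in time with values in $H^1$. Relatedly, your citation of Lemma \ref{106} for the $H^3$-continuity of $\varphi$ is misplaced: the relevant tool is Lemma \ref{104} (weak continuity plus $\limsup_{t\to 0}\|\varphi(t)\|_3\le\|\varphi_0\|_3$ gives right-continuity at $t=0$, then time-reversibility of the transport equation gives continuity on all of $[0,T^{**}]$); Lemma \ref{106} is used only later, in the proof of Theorem \ref{38}, to handle $t\,|u_t|_{D^2}^2$. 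These are repairable, standard omissions rather than a wrong approach.
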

\begin{proof}
We prove the existence, uniqueness and time continuity in three steps.

Step 1. Existence.  Due to Lemma \ref{21} and the uniform estimates \eqref{20}, for every $\eta>0$, there exists a unique strong solution $\left(\varphi^\eta, W^\eta\right)$ in $\left[0, T^{**}\right] \times \mathbb{R}^3$ to the linearized problem \eqref{5} satisfying estimates \eqref{20}, where the time $T^{**}>0$ is  independent of $\eta$.

By virtue of the uniform estimates \eqref{20} independent of $\eta$ and  Lemma \ref{22}, we know that for any $R>0$, there exists a subsequence of solutions (still denoted by) $\left(\varphi^\eta, W^\eta\right)$, which converges to a limit $(\varphi,  W)=(\varphi, \phi, u)$ in the following strong sense:
\begin{equation}\label{24}
\left(\varphi^\eta, W^\eta\right) \rightarrow(\varphi, W) \quad \text { in } C\left(\left[0, T^{**}\right] ; H^2\left(B_R\right)\right), \quad \text { as } \eta \rightarrow 0 .
\end{equation}

Again by virtue of the uniform estimates \eqref{20} independent of $\eta$, we also know that there exists a subsequence of solutions (still denoted by) $\left(\varphi^\eta, W^\eta\right)$, which converges to $(\varphi, W)$ in the following weak or weak* sense:
\begin{equation}\label{25}
\begin{aligned}
\left(\varphi^\eta,  W^\eta\right)\rightharpoonup(\varphi,  W) & \text { weakly* in } L^{\infty}\left(\left[0, T^{**}\right] ; H^3\left(\mathbb{R}^3\right)\right), \\
\left( \varphi_t^\eta, \phi_t^\eta\right)\rightharpoonup\left( \varphi_t, \phi_t \right) & \text { weakly* in } L^{\infty}\left(\left[0, T^{**}\right] ; H^2\left(\mathbb{R}^3\right)\right), \\
u_t^\eta \rightharpoonup u_t & \text { weakly* in } L^{\infty}\left(\left[0, T^{**}\right] ; H^1\left(\mathbb{R}^3\right)\right), \\
u_t^\eta \rightharpoonup u_t & \text { weakly in } L^2\left(\left[0, T^{**}\right] ; D^2\left(\mathbb{R}^3\right)\right),
\end{aligned}
\end{equation}
which, along with the lower semi-continuity of weak convergence, implies that $(\varphi,  W)$ also satisfies the corresponding estimates \eqref{20} except those of $\varphi \nabla^4 u$.

Combining the strong convergence in \eqref{24} and the weak convergence in \eqref{25}, we easily obtain that $(\varphi, W)$ also satisfies the local estimates \eqref{20} and
\begin{equation}\label{27}
\varphi^\eta \nabla^4 u^\eta\rightharpoonup\varphi \nabla^4 u \quad \text{weakly in}\quad  L^2\left(\left[0, T^{**}\right] \times \mathbb{R}^3\right).
\end{equation}

Now we want to show that $(\varphi, W)$ is a weak solution in the sense of distributions to the linearized problem \eqref{19}. Multiplying $\eqref{5}_2$ by test function $f(t, x)=$ $\left(f^1, f^2, f^3\right) \in C_c^{\infty}\left(\left[0, T^{**}\right) \times \mathbb{R}^3\right)$ on both sides, and integrating over $\left[0, T^{**}\right] \times \mathbb{R}^3$, we have
\begin{equation}\label{28}
\begin{aligned}
& \int_0^t \int_{\mathbb{R}^3} u^\eta \cdot f_t \mathrm{d} x \mathrm{d} s-\int_0^t \int_{\mathbb{R}^3}(v \cdot \nabla) u^\eta \cdot f \mathrm{d} x \mathrm{d} s-\int_0^t \int_{\mathbb{R}^3} \frac{2 A \gamma}{\gamma-1} \tilde{\phi} \nabla \phi^\eta f \mathrm{d} x \mathrm{d} s \\
=&\int_0^t \int_{\mathbb{R}^3} \alpha\Big((\varphi^\eta)^2+\eta^2\Big)L_1(u^\eta)  f \mathrm{d} x \mathrm{d} s+\int_0^t \int_{\mathbb{R}^3} (\varphi^\eta)^2(\alpha+\beta(\varphi^\eta)^{\frac{2(\delta_2-\delta_1)}{\delta_1-1}})L_2(u^\eta) f \mathrm{d} x \mathrm{d} s\\
&-\int_0^t \int_{\mathbb{R}^3}\Big(\nabla(\varphi^\eta)^2\cdot Q_1(v)+\nabla(\varphi^\eta)^{\frac{2(\delta_2-1)}{\delta_1-1}}\cdot Q_2(v)\Big) f \mathrm{d} x \mathrm{d} s -\int u_0  f(0, x).
\end{aligned}
\end{equation}

Combining the strong convergence in \eqref{24} and the weak convergences in \eqref{25}-\eqref{27},  and letting $\eta \rightarrow 0$ in \eqref{28}, we have
\begin{equation}
\begin{aligned}
& \int_0^t \int_{\mathbb{R}^3} u \cdot f_t \mathrm{d} x \mathrm{d} s-\int_0^t \int_{\mathbb{R}^3}(v \cdot \nabla) u \cdot f \mathrm{d} x \mathrm{d} s-\frac{2 A \gamma}{\gamma-1} \int_0^t \int_{\mathbb{R}^3} \tilde{\phi} \nabla \phi f \mathrm{d} x \mathrm{d} s \\
=&-\int u_0  f(0, x)+\int_0^t \int_{\mathbb{R}^3} \varphi^2 L_1(u) f \mathrm{d} x \mathrm{d} s+\int_0^t \int_{\mathbb{R}^3} \varphi^2(\alpha+\beta\varphi^{\frac{2(\delta_2-\delta_1)}{\delta_1-1}}) L_2(u) f \mathrm{d} x \mathrm{d} s\\
& -\int_0^t \int_{\mathbb{R}^3}\Big(\nabla \varphi^2\cdot Q_1(v)+\nabla \varphi^{\frac{2(\delta_2-1)}{\delta_1-1}}\cdot Q_2(v)\Big) f \mathrm{d} x \mathrm{d} s.
\end{aligned}
\end{equation}
Thus it is obvious that $(\varphi, W)$ is a weak solution in the sense of distributions to the linearized problem \eqref{19}, satisfying the regularities
\begin{equation}\label{86}
\begin{aligned}
& \varphi \in L^{\infty}\left(\left[0, T^{**}\right] ; H^3\right), \varphi_t \in L^{\infty}\left(\left[0, T^{**}\right] ; H^2\right), 
\phi \in L^{\infty}\left(\left[0, T^{**}\right] ; H^3\right), \\& \phi_t \in L^{\infty}\left(\left[0, T^{**}\right] ; H^2\right), u \in L^{\infty}\left(\left[0, T^{**}\right] ; H^3\right), \\
& \varphi \nabla^4 u \in L^2\left(\left[0, T^{**}\right] ; L^2\right), \quad u_t \in L^{\infty}\left(\left[0, T^{**}\right] ; H^1\right) \cap L^2\left(\left[0, T^{**}\right] ; D^2\right) .
\end{aligned}
\end{equation}

Step 2. Uniqueness. Let $\left(\varphi_1,  W_1\right)$ and $\left(\varphi_2, W_2\right)$ be two solutions obtained in the above step. We denote
\begin{equation*}
\bar{\varphi}=\varphi_1-\varphi_2,  \quad \bar{W}=W_1-W_2.
\end{equation*}
Then from $\eqref{19}_1$, we have
\begin{equation*}
\bar{\varphi}_t+v \cdot \nabla \bar{\varphi}=0, 
\end{equation*}
which implies that $\bar{\varphi}=0$.
Let $\bar{W}=(\bar{\phi}, \bar{u})^{\top}$, from $\eqref{19}_2$ and $\varphi_1=\varphi_2$, we have
\begin{equation}\label{42}
A_0 \bar{W}_t+ A_1(V)  \bar{W}_x=-\varphi_1^2(\alpha L_1(\bar{u})+(\alpha+\beta\varphi_1^{2m})L_2(\bar{u})).
\end{equation}
Then multiplying \eqref{42} by $\bar{W}$ on both sides, and integrating over $\mathbb{R}^3$, we have
\begin{equation}
\begin{aligned}
&\frac{1}{2}  \frac{\mathrm{d}}{\mathrm{d} t} \int \bar{W}^{\top} A_0 \bar{W}+a_1\alpha \left|\varphi_1  \nabla\bar{u}\right|_2^2 +a_1(\alpha+\beta \varphi_1^{2m}) \left|\varphi_1  \mathrm{div}\bar{u}\right|_2^2\\
 \leq & C(| \nabla V|_{\infty}|\bar{W}|_2^2+\left|\nabla \varphi_1\right|_{\infty}|\bar{u}|_2\left|\varphi_1 \nabla \bar{u}\right|_2 +|\varphi_1|_\infty^{2m-2}|\nabla \varphi_1|_\infty|\varphi_1\text{div}\bar{u}|_2|\bar{u}|_2) \\
 \leq & \frac{a_1 \alpha}{10}\left|\varphi_1  \nabla\bar{u}\right|_2^2+\frac{a_1 \beta}{10}\left|\varphi_1  \mathrm{div}\bar{u}\right|_2^2+Cc_2^{4m-2}|\bar{W}|_2^2,
\end{aligned}
\end{equation}
which yields that
\begin{equation}
  \frac{\mathrm{d}}{\mathrm{d} t} | \bar{W}|_2^2+\left|\varphi_1 \nabla\bar{u}\right|_2^2+\left|\varphi_1 \mathrm{div}\bar{u}\right|_2^2\leq Cc_2^2|\bar{W}|_2^2.
\end{equation}
From Gronwall's inequality, we obtain that $\bar{W}=0$, which gives the uniqueness.

Step 3. Time continuity. First for $\varphi$, via the regularities shown in \eqref{86} and the classical Sobolev imbedding theorem, we have
\begin{equation}\label{43}
\varphi \in C\left(\left[0, T^{**}\right] ; H^2\right) \cap C\left(\left[0, T^{**}\right] ; \text { weak }-H^3\right) .
\end{equation}
Using the same arguments as in Lemma \ref{23}, we have
\begin{equation*}
\|\varphi(t)\|_3^2 \leq\left(\left\|\varphi_0\right\|_3^2+C \int_0^t\left(\|\tilde{\varphi}\|_3^2\|v\|_3^2+\left|\tilde{\varphi}  \nabla^4 v\right|_2^2\right) \mathrm{d} s\right) \exp \left(C \int_0^t\|v\|_3 \mathrm{~d} s\right),
\end{equation*}
which implies that
\begin{equation*}
\lim \sup _{t \rightarrow 0}\|\varphi(t)\|_3 \leq\left\|\varphi_0\right\|_3 .
\end{equation*}
Then according to Lemma \ref{104} and \eqref{43}, we know that $\varphi$ is right continuous at $t=0$ in $H^3$ space. From the reversibility on the time to equation $\eqref{19}_1$, we know
\begin{equation}\label{44}
\varphi \in C\left(\left[0, T^{**}\right] ; H^3\right).
\end{equation}
For $\varphi_t$, from
\begin{equation*}
\varphi_t=-v \cdot \nabla \varphi-\frac{\delta-1}{2} \tilde{\varphi}  \text{div}v,
\end{equation*}
we only need to consider the term $\tilde{\varphi} \text{div} v$. Due to
\begin{equation*}
\tilde{\varphi}  \text{div}v \in L^2\left(\left[0, T^{**}\right] ; H^3\right), \quad(\tilde{\varphi}  \text{div}v)_t \in L^2\left(\left[0, T^{**}\right] ; H^1\right),
\end{equation*}
and the Sobolev imbedding theorem, we have
\begin{equation*}
\tilde{\varphi}  \text{div}v \in C\left(\left[0, T^{**}\right] ; H^2\right),
\end{equation*}
which implies that
\begin{equation*}
\varphi_t \in C\left(\left[0, T^{**}\right] ; H^2\right).
\end{equation*}
The similar arguments can be used to deal with the regularities of $ \phi$:
\begin{equation}
\phi\in C\left(\left[0, T^{**}\right] ; H^2\right), \quad  \phi_t \in C\left(\left[0, T^{**}\right] ; H^1\right).
\end{equation}
For velocity $u$, from the regularity shown in \eqref{86} and Sobolev's imbedding theorem, we obtain that
\begin{equation}\label{45}
u \in C\left(\left[0, T^{**}\right] ; H^1\right) \cap C\left(\left[0, T^{**}\right] ; \text {weak-}H^2\right) .
\end{equation}
Then from Lemma \ref{103}, for any $s^{\prime} \in[2,3)$, we have
\begin{equation*}
\|u\|_{s^{\prime}} \leq C|u|_2^{1-\frac{s^{\prime}}{2}}\|u\|_2^{\frac{s^{\prime}}{2}} \text {. }
\end{equation*}
Together with the upper bound shown in \eqref{20} and the time continuity \eqref{45}, we have
\begin{equation}\label{46}
u \in C(\left[0, T^{**}\right] ; H^{s^{\prime}}) .
\end{equation}
Finally, we consider $u_t$. From equations $\eqref{19}_2$ we have
\begin{equation*}
u_t=-v \cdot \nabla u-\frac{2A  \gamma}{\gamma-1} \tilde{\phi}\nabla \phi-a_1\varphi^2(\alpha  \triangle u +(\alpha+\beta\varphi^{2m})\nabla \text{div}u ) +\frac{\alpha\delta_1}{\delta_1-1}Q_1(u)\cdot \nabla\varphi^2+\frac{\beta\delta_2}{\delta_2-1}Q_2(u)\cdot \nabla\varphi^{2m+2}.
\end{equation*}
%From \eqref{86}, we have
%\begin{equation*}
%\begin{aligned}
% \varphi^2 L_1( u) \in L^2\left(\left[0, T^{**}\right] ; H^2\right), \quad \left(\varphi^2 L_1( u)\right)_t \in L^2\left(\left[0, T^{**}\right] ; L^2\right),\\
% \psi^2 L_2( u) \in L^2\left(\left[0, T^{**}\right] ; H^2\right), \quad \left(\psi^2 L_2( u)\right)_t \in L^2\left(\left[0, T^{**}\right] ; L^2\right),
% \end{aligned}
%\end{equation*}
%which means that
%\begin{equation}\label{47}
%\varphi^2 L_1( u) \in C\left(\left[0, T^{**}\right] ; H^1\right),\quad \psi^2 L_2( u) \in C\left(\left[0, T^{**}\right] ; H^1\right).
%\end{equation}
Then combining \eqref{86},\eqref{44},\eqref{46} and standard embedding theorem , we deduce that
\begin{equation*}
u_t \in C\left(\left[0, T^{**}\right] ; H^1\right) .
\end{equation*}  
The proof of Lemma \ref{85} is complete. \qed

\end{proof}

\subsection{Proof of Theorem \ref{29}} \label{109}

Our proof is based on the classical iteration scheme and the existence results for the linearized problem obtained in Section \ref{83}. Like in Section \ref{84}, we define constants $c_0$ and $c_i(i=1,2,3)$, and assume that
$$
1+\left\|\varphi_0\right\|_3+\left\|W_0\right\|_3 \leq c_0.
$$

Let $\left(\varphi^0, W^0=\left(\phi^0, u^0\right)\right)$, with the regularities
$$
\begin{aligned}
& \varphi^0 \in C\left(\left[0, T^{**}\right] ; H^3\right),  \quad \phi^0 \in C\left(\left[0, T^{**}\right] ; H^3\right), \quad \varphi^0 \nabla^4 u^0 \in L^2\left(\left[0, T^{**}\right] ; L^2\right),\\
& u^0 \in C(\left[0, T^{**}\right] ; H^{s^{\prime}}) \cap L^{\infty}\left(\left[0, T^{**}\right] ; H^3\right) \text { for any } s^{\prime} \in[2,3),
\end{aligned}
$$
be the solution to the problem
\begin{equation}
\left\{\begin{array}{l}
X_t+u_0 \cdot \nabla X=0 \quad \text { in }(0,+\infty) \times \mathbb{R}^3, \\
Y_t+u_0 \cdot \nabla Y=0 \quad \text { in }(0,+\infty) \times \mathbb{R}^3, \\
Z_t-X^2 \triangle Z=0 \quad \text { in }(0,+\infty) \times \mathbb{R}^3, \\
\left.(X, Y, Z)\right|_{t=0}=\left(\varphi_0, \phi_0, u_0\right) \quad \text { in } \mathbb{R}^3, \\
(X, Y, Z) \rightarrow(0,0,0) \quad \text { as }|x| \rightarrow+\infty, \quad t \geq 0 .
\end{array}\right.
\end{equation}

We take a time $T^{***} \in\left(0, T^{**}\right]$ small enough such that
\begin{equation}
\begin{array}{r}
\sup\limits_{0 \leq t \leq T^{***}}\left(|| \varphi^0(t)\left\|_1^2+|| \phi^0(t)\right\|_1^2+|| u^0(t) \|_1^2\right)+\int_0^{T^{***}} \left|\varphi^0 \nabla^2 u^0\right| \mathrm{d} t\leq c_1^2, \\
\sup\limits_{0 \leq t \leq T^{***}}\left(\left|\varphi^0(t)\right|_{D^2}^2+\left|\phi^0(t)\right|_{D^2}^2+\left|u^0(t)\right|_{D^2}^2\right)+\int_0^{T^{* **}} \left|\varphi^0 \nabla^3 u^0\right| \mathrm{d} t\leq c_2^2, \\
\sup\limits_{0 \leq t \leq T^{** *}}\left(\left|\varphi^0(t)\right|_{D^3}^2+\left|\phi^0(t)\right|_{D^3}^2+\left|u^0(t)\right|_{D^3}^2\right)+\int_0^{T^{***}} \left|\varphi^0 \nabla^4 u^0\right| \mathrm{d} t\leq c_3^2.
\end{array}
\end{equation}

Proof. We prove the existence, uniqueness and time continuity as follows. 

Step 1. Existence. Let $(\tilde{\varphi},\tilde{\phi},v)=\left(\varphi^0, \phi^0, u^0\right)$, we define $\left(\varphi^1, W^1\right)$ as a strong solution to problem \eqref{19}. Then we construct approximate solutions
\begin{equation*}
\left(\varphi^{k+1}, W^{k+1}\right)=\left(\varphi^{k+1},  \phi^{k+1}, u^{k+1}\right)
\end{equation*}
inductively, by assuming that $\left(\varphi^k, W^k\right)$ was defined for $k \geq 1$, let $\left(\varphi^{k+1},  W^{k+1}\right)$ be the unique solution to problem \eqref{19} with  $(\tilde{\varphi}, \tilde{\phi},v)$  replaced by $\left(\varphi^k,  W^k\right)$ as follows:
\begin{equation}\label{31}
\left\{\begin{split}
&\varphi^{k+1}_t+u^k\cdot \nabla \varphi^{k+1}+\frac{\delta_1-1}{2} \varphi^k  \text{div}u^k=0, \\
&A_0 W^{k+1}_t+ \sum_{j=1}^{3}A_j(W^k)\partial_j W^{k+1}+\alpha(\varphi^{k+1})^2 L_1(u^{k+1})+(\varphi^{k+1})^2(\alpha+\beta (\varphi^{k+1})^{2m}) L_2(u^{k+1})\\
=&\mathbb{Q}_1(W^k)\cdot H(\varphi^{k+1})+\mathbb{Q}_2(W^k)\cdot H((\varphi^{k+1})^{m+1})   , \\
&(\varphi^{k+1},W^{k+1})|_{t=0}=\left(\varphi_0(x), W_0(x)\right), \quad x \in \mathbb{R}^3, \\
&(\varphi^{k+1}, W^{k+1}) \rightarrow(0,0), \quad \text { as } \quad|x| \rightarrow+\infty, \quad t \geq 0.
\end{split}\right.
\end{equation}
It follows from Lemma \ref{85} that the sequence $\left(\varphi^k, W^k\right)$ satisfies the uniform a priori estimates \eqref{20} for $0 \leq t \leq T^{***}$.

Now we prove the convergence of the whole sequence $\left(\varphi^k, W^k\right)$ of approximate solutions to a limit $(\varphi, W)$ in some strong sense. Let
$$
\bar{\varphi}^{k+1}=\varphi^{k+1}-\varphi^k, \quad \bar{W}^{k+1}=\left(\bar{\phi}^{k+1}, \bar{u}^{k+1}\right)^{\top},
$$
with
$$
\bar{\phi}^{k+1}=\phi^{k+1}-\phi^k, \quad \bar{u}^{k+1}=u^{k+1}-u^k.
$$

Then, from \eqref{31}, we can obtain that
\begin{equation}\label{33}
\left\{\begin{aligned}
&\bar{\varphi}_t^{k+1}+u^k \cdot \nabla \bar{\varphi}^{k+1}+\bar{u}^k \cdot \nabla \varphi^k+\frac{\delta_1-1}{2}(\bar{\varphi}^k \operatorname{div} u^{k-1}+\varphi^k \operatorname{div} \bar{u}^k)=0, \\
&A_0 \bar{W}_t^{k+1}+\sum_{j=1}^3 A_j(W^k) \partial_j \bar{W}^{k+1}+\alpha(\varphi^{k+1})^2 L_1(\bar{u}^{k+1})+(\varphi^{k+1})^2(\alpha+\beta (\varphi^{k+1})^{2m})  L_2(\bar{u}^{k+1}) \\
=&\sum_{j=1}^3 A_j(\bar{W}^k) \partial_j W^k- \alpha \bar{\varphi}^{k+1}(\varphi^{k+1}+\varphi^k) L_1(u^k)- (\alpha+\beta (\varphi^{k+1})^{2m})L_2(u^k)(\varphi^{k+1}+\varphi^k)\bar{\varphi}^{k+1}\\
&-\beta(\varphi^k)^2L_2(u^k)((\varphi^{k+1})^{2m}-(\varphi^k)^{2m})+\mathbb{Q}_1(W^k)\cdot(\mathbb{H}(\varphi^{k+1})-\mathbb{H}(\varphi^k))  +\mathbb{Q}_1(\bar{W}^k)\cdot \mathbb{H}(\varphi^{k})\\
& +\mathbb{Q}_2(W^k)\cdot(\mathbb{H}((\varphi^{k+1})^{m+1})-\mathbb{H}((\varphi^k)^{m+1}))  +\mathbb{Q}_2(\bar{W}^k)\cdot \mathbb{H}((\varphi^{k})^{m+1}).
\end{aligned}\right.
\end{equation}
First, we consider $|\bar{\varphi}^{k+1}|_2$. Multiplying $\eqref{33}_1$ by $2 \bar{\varphi}^{k+1}$ and integrating over $\mathbb{R}^3$, one has
$$
\begin{aligned}
\frac{\mathrm{d}}{\mathrm{d}t}|\bar{\varphi}^{k+1}|_2^2= & -2 \int(u^k \cdot \nabla \bar{\varphi}^{k+1}+\bar{u}^k \cdot \nabla \varphi^k  +\frac{\delta_1-1}{2}\left(\bar{\varphi}^k \operatorname{div} u^{k-1}+\varphi^k \operatorname{div} \bar{u}^k\right) \bar{\varphi}^{k+1} \\
\leq & C|\nabla u^k|_{\infty}|\bar{\varphi}^{k+1}|_2^2+C|\bar{\varphi}^{k+1}|_2(|\bar{u}^k|_2|\nabla \varphi^k|_{\infty}  +|\bar{\varphi}^k|_2|\nabla u^{k-1}|_{\infty}+|\varphi^k \operatorname{div} \bar{u}^k|_2),
\end{aligned}
$$
which means that 
\begin{equation}\label{37}
\frac{\mathrm{d}}{\mathrm{d}t}|\bar{\varphi}^{k+1}(t)|_2^2 \leq C_\nu|\bar{\varphi}^{k+1}(t)|_2^2+\nu\left(|\bar{u}^k(t)|_2^2+|\bar{\varphi}^k(t)|_2^2+|\varphi^k \operatorname{div} \bar{u}^k(t)|_2^2\right)
\end{equation}
with $C_\nu=C\left(1+\nu^{-1}\right)$ and   $0<\nu \leq \frac{1}{10}$  is a constant.

Next, we consider $|\bar{W}^{k+1}|_2$. Multiplying $\eqref{33}_3$ by $2 \bar{W}^{k+1}$ and integrating over $\mathbb{R}^3$, we obtain that
\begin{equation}\label{35}
\begin{aligned}
&\frac{\mathrm{d}}{\mathrm{d}t}  \int(\bar{W}^{k+1})^{\top} A_0 \bar{W}^{k+1}+2 a_1  \alpha|\varphi^{k+1} \nabla \bar{u}^{k+1}|_2^2+a_1\alpha|\varphi^{k+1}\text{div}\bar{u}^{k+1}|_2^2 \\
\leq&  \int(\bar{W}^{k+1})^{\top} \operatorname{div} A(W^k) \bar{W}^{k+1}+\int \sum_{j=1}^3(\bar{W}^{k+1})^{\top} A_j(\bar{W}^k) \partial_j W^k \\
& -2 a_1\alpha   \int \nabla(\varphi^{k+1})^2 \cdot \nabla \bar{u}^{k+1} \cdot \bar{u}^{k+1} -2 a_1\beta  \int \nabla((\varphi^{k+1})^2(\alpha+\beta (\varphi^{k+1})^{2m}))  \mathrm{div} \bar{u}^{k+1} \cdot \bar{u}^{k+1}\\
& -2  \alpha \int(\bar{\varphi}^{k+1}(\varphi^{k+1}+\varphi^k)  L_1(u^k))\bar{u}^{k+1}-2   \int (\alpha+\beta (\varphi^{k+1})^{2m})L_2(u^k)(\varphi^{k+1}+\varphi^k)\bar{\varphi}^{k+1} \bar{u}^{k+1}  \\
&-2(2m-1)\beta \int (\varphi^k)^2L_2(u^k)(\theta^{k+1})^{2m-1}\bar{\varphi}^{k+1}\bar{u}^{k+1}\\
& -2 a_1\alpha \frac{\delta_1-1}{\delta_1} \int \nabla(\bar{\varphi}^{k+1}(\varphi^{k+1}+\varphi^k)) \cdot Q_1(u^k)\cdot \bar{u}^{k+1}\\
&-2(2m+2) a_1\beta \frac{\delta_2-1}{\delta_2}\int \nabla(\bar{\varphi}^{k+1}(\theta^{k+1})^{2m+1}) \cdot Q_2(u^k) \cdot \bar{u}^{k+1} \\
& +2 a_1 \alpha \frac{\delta_1-1}{\delta_1}\int \nabla(\varphi^k)^2 \cdot(  Q_1(u^k)-Q_1( u^{k-1})) \cdot \bar{u}^{k+1}\\
&+2 a_1 \beta\frac{\delta_2-1}{\delta_2} \int \nabla (\varphi^{k})^{2m+2} \cdot(Q_2(u^k)-Q_2( u^{k-1})) \cdot \bar{u}^{k+1}
:=\sum_{i=1}^{11} \mathrm{J}_i,
\end{aligned}
\end{equation}
where we used Lagrange’s mean value theorem and $\theta^{k+1}(x,t)$ is between $\varphi^{k+1}$ and $\varphi^k$.

It follows from  Lemma \ref{41}, \eqref{13},  H\"older's inequality and Young's inequality that 
\begin{align}
\mathrm{J}_1= & \int\left(\bar{W}^{k+1}\right)^{\top} \operatorname{div} A(W^k) \bar{W}^{k+1} \leq C|\nabla W^k|_{\infty}|\bar{W}^{k+1}|_2^2 \leq C|\bar{W}^{k+1}|_2^2, \nonumber\\
\mathrm{J}_2= & \int \sum_{j=1}^3 A_j\left(\bar{W}^k\right) \partial_j W^k \cdot \bar{W}^{k+1} \nonumber \\
\leq & C|\nabla W^k|_{\infty}|\bar{W}^k|_2|\bar{W}^{k+1}|_2 \leq C \nu^{-1}|\bar{W}^{k+1}|_2^2+\nu|\bar{W}^k|_2^2, \nonumber\\
\mathrm{J}_3= & -2 a_1\alpha   \int \nabla(\varphi^{k+1})^2 \cdot \nabla \bar{u}^{k+1} \cdot \bar{u}^{k+1} \nonumber\\
\leq & C |\nabla \varphi^{k+1}|_{\infty}|\varphi^{k+1} \nabla \bar{u}^{k+1}|_2|\bar{u}^{k+1}|_2 \label{34} \\
\leq & C |\bar{W}^{k+1}|_2^2+\frac{a_1  \alpha}{10}|\varphi^{k+1} \nabla \bar{u}^{k+1}|_2^2, \nonumber \\
\mathrm{J}_4=& -2 a_1\beta  \int \nabla((\varphi^{k+1})^2(\alpha+\beta (\varphi^{k+1})^{2m}))  \mathrm{div} \bar{u}^{k+1} \cdot \bar{u}^{k+1} \nonumber\\
\leq & C |\varphi^{k+1}\text{div}\bar{u}^{k+1}|_2|\bar{u}^{k+1}|_2|\nabla \varphi^{k+1}|_\infty|\varphi^{k+1}|_\infty^{2m} \nonumber\\
\leq & C |\bar{W}^{k+1}|_2^2+\frac{a_1  \alpha}{10}|\varphi^{k+1} \mathrm{div} \bar{u}^{k+1}|_2^2, \nonumber\\
\mathrm{J}_5= & -2  \alpha \int(\bar{\varphi}^{k+1}(\varphi^{k+1}+\varphi^k) \cdot L_1(u^k))\cdot \bar{u}^{k+1}\nonumber\\
\leq & C|\bar{\varphi}^{k+1}|_2|\bar{u}^{k+1}|_2|\varphi^k \nabla^2 u^k|_\infty+C|\bar{\varphi}^{k+1}|_2|\varphi^{k+1}\nabla \bar{u}^{k+1}|_2 \nonumber\\
\leq & C|\bar{\varphi}^{k+1}|_2^2+C(1+|\varphi^{k+1}\nabla^4 u^k|_2)|\bar{u}^{k+1}|_2^2+\frac{a_1\alpha}{10}|\varphi^{k+1}\nabla \bar{u}^{k+1}|_2^2, \nonumber\\
\mathrm{J}_6= & -2   \int (\alpha+\beta (\varphi^{k+1})^{2m})L_2(u^k)(\varphi^{k+1}+\varphi^k)\bar{\varphi}^{k+1} \bar{u}^{k+1}\nonumber\\
\leq & C|\bar{\varphi}^{k+1}|_2|\bar{u}^{k+1}|_2|\varphi^k \nabla \text{div} u^k|_\infty+C|\bar{\varphi}^{k+1}|_2|\varphi^{k+1}\bar{u}^{k+1}|_3|\nabla \text{div} u^k|_6   \nonumber\\
\leq & C|\bar{\varphi}^{k+1}|_2^2+C(1+|\varphi^{k}\nabla^3\text{div} u^k|_2^2)|\bar{u}^{k+1}|_2^2+\frac{a_1\alpha}{10}|\varphi^{k+1}\nabla\bar{u}^{k+1}|_2^2 , \nonumber\\
\mathrm{J}_7=& -2(2m-1)\beta \int (\varphi^k)^2L_2(u^k)(\theta^{k+1})^{2m-1}\bar{\varphi}^{k+1}\bar{u}^{k+1} \nonumber\\
\leq & C(1+|\varphi^k\nabla^3 \text{div}u^k|_2)|\bar{W}^{k+1}|_2^2+C|\varphi^{k+1}|_2^2, \nonumber\\
\mathrm{J}_8=& -2 a_1\alpha  \frac{\delta_1-1}{\delta_1}\int \nabla(\bar{\varphi}^{k+1}(\varphi^{k+1}+\varphi^k)) \cdot Q_1(u^k) \cdot \bar{u}^{k+1}\nonumber\\
\leq & C|\nabla^2 u^k|_6|\bar{\varphi}^{k+1}|_2|\varphi^{k+1}\bar{u}^{k+1}|_3+C|\bar{\varphi}^{k+1}|_2|\varphi^k\nabla^2u^k|_\infty |\bar{u}^{k+1}|_2 \nonumber\\
&+C|\bar{\varphi}^{k+1}|_2|\nabla u^k|_\infty |\varphi^{k+1}\nabla \bar{u}^{k+1}|_2+C|\bar{\varphi}^{k+1}|_2|\varphi^{k+1}\nabla \bar{u}^{k+1}|_2|\nabla u^k|_\infty \nonumber\\
&+C |\bar{\varphi}^{k+1}|_2^2 |\nabla u^k|_\infty|\nabla \bar{u}^{k+1}|_\infty \nonumber\\
\leq & C |\bar{\varphi}^{k+1}|_2^2+C(1+|\varphi^k\nabla^4 u^k|_2^2)|\bar{W}^{k+1}|_2^2+\frac{a_1\alpha}{10}|\varphi^{k+1}\nabla \bar{u}^{k+1}|_2^2,\nonumber\\
\mathrm{J}_9=& -2(2m+2) a_1\beta \frac{\delta_2-1}{\delta_2}\int \nabla(\bar{\varphi}^{k+1}(\theta^{k+1})^{2m+1}) \cdot Q_2(u^k) \cdot \bar{u}^{k+1} \nonumber\\
\leq & C (|\bar{\varphi}^{k+1}|_2|\varphi^k \nabla \mathrm{div} u^k|_\infty|\varphi^k|_\infty^m |\bar{u}^{k+1}|_2+|\bar{\varphi}^{k+1}|_2|\varphi^{k+1} \bar{u}^{k+1}|_6|\nabla \text{div}u^k|_3|\varphi^k|_\infty^m \nonumber\\
&+|\bar{\varphi}^{k+1}||\varphi^{k+1}\nabla \bar{u}^{k+1}|_2|\text{div}u^k|_\infty+|\bar{\varphi}^{k+1}|^2|\nabla \bar{u}^{k+1}|_\infty|\text{div}u^k|_\infty)\nonumber\\
\leq & C |\bar{\varphi}^{k+1}|_2^2+C(1+|\varphi^k\nabla^3 \mathrm{div} u^k|_2^2)|\bar{W}^{k+1}|_2^2+\frac{a_1\alpha}{10}|\varphi^{k+1}\mathrm{div}\bar{u}^{k+1}|_2^2,\nonumber\\
\mathrm{J}_{10}=&2 a_1 \alpha\frac{\delta_1-1}{\delta_1} \int \nabla(\varphi^k)^2 \cdot(Q_1(u^k)-Q_1(u^{k-1})) \cdot \bar{u}^{k+1} \nonumber\\
\leq & C|\nabla \varphi^k|_\infty|\varphi^k\nabla\bar{u}^k|_2|\bar{u}^{k+1}|_2\leq C\nu^{-1}|\bar{W}^{k+1}|_2^2+\nu|\varphi^k\nabla\bar{u}^k|_2^2, \nonumber \\
\mathrm{J}_{11}=&2 a_1 \beta\frac{\delta_2-1}{\delta_2} \int \nabla (\varphi^{k})^{2m+2} \cdot(Q_2(u^k)-Q_2( u^{k-1})) \cdot \bar{u}^{k+1} \nonumber\\
\leq & C|\nabla \varphi^k|_\infty^{2m}|\varphi^k\mathrm{div}\bar{u}^k|_2|\bar{u}^{k+1}|_2\leq C\nu^{-1}|\bar{W}^{k+1}|_2^2+\nu|\varphi^k\mathrm{div}\bar{u}^k|_2^2. \nonumber
\end{align}

Then, from \eqref{35} and \eqref{34},  it yields that 
\begin{equation}\label{36}
\begin{aligned}
&\frac{\mathrm{d}}{\mathrm{d}t} \int(\bar{W}^{k+1})^{\top} A_0 \bar{W}^{k+1}+a_1  \alpha|\varphi^{k+1} \nabla \bar{u}^{k+1}|_2^2+a_1\alpha |\varphi^{k+1}\mathrm{div}\bar{u}^{k+1}|_2^2 \\
\leq &  C\left(\nu^{-1}+|\varphi^k \nabla^4 u^k|_2^2\right)|\bar{W}^{k+1}|_2^2+C |\bar{\varphi}^{k+1}|_2^2  +\nu\left(|\varphi^k \nabla\bar{u}^k|_2^2+|\bar{\varphi}^k|_2^2+|\bar{W}^k|_2^2\right) .
\end{aligned}
\end{equation}

We denote
\begin{equation*}
  S^{k+1}(t)=\sup\limits_{s\in[0,t]}|\bar{W}^{k+1}(s)|_2^2+\sup\limits_{s\in[0,t]}|\bar{\varphi}^{k+1}(s)|_2^2.
\end{equation*}

From \eqref{37}- \eqref{36}, we have
\begin{equation}
\begin{aligned}
& \frac{\mathrm{d}}{\mathrm{d}t} \int\Big((\bar{W}^{k+1})^{\top} A_0 \bar{W}^{k+1}+|\bar{\varphi}^{k+1}(s)|_2^2\Big)+|\varphi^{k+1} \nabla \bar{u}^{k+1}|_2^2 \\
\leq &  E^k_\nu(|\bar{W}^{k+1}|_2^2+|\bar{\varphi}^{k+1}|_2^2 ) +\nu\left(|\varphi^k \nabla\bar{u}^k|_2^2+|\bar{\varphi}^k|_2^2+|\bar{W}^k|_2^2\right)
\end{aligned}
\end{equation}
for some $E^k_\nu$ such that $\int_{0}^{t} E^k_\nu ds\leq C+C(1+\frac{1}{\nu})t$. According to Gronwall's inequality, one has
\begin{equation}
\begin{aligned}
&S^{k+1}+\int_{0}^{t}|\varphi^{k+1}\nabla\bar{u}^{k+1}|_2^2\mathrm{d}s\\
\leq & C\nu \int_{0}^{t}(|\varphi^k \nabla\bar{u}^k|_2^2+|\bar{\varphi}^k|_2^2+|\bar{W}^k|_2^2)\mathrm{d}s\exp(C+C(1+\frac{1}{\nu})t)\\
\leq & \Big(C\nu\int_{0}^{t}|\varphi^k \nabla\bar{u}^k|_2^2\mathrm{d}s+Ct\nu\sup\limits_{s\in[0,t]}[|\bar{W}^k|_2^2+|\bar{\varphi}^k|_2^2]\Big)\exp(C+C(1+\frac{1}{\nu})t).
\end{aligned}
\end{equation}
We can choose $\nu_0>0$ and $T_{*}\in(0,\min(1,T^{***}))$ small enough such that 
\begin{equation*}
  C\nu_0\exp C\leq \frac{1}{8},\quad \exp(C(1+\frac{1}{\nu})T_*)\leq 2,
\end{equation*}
which yields that
\begin{equation}\label{40}
 \sum_{k=1}^{+\infty} \Big(S^{k+1}(T_*)+\int_{0}^{T_*}(|\varphi^{k+1} \nabla \bar{u}^{k+1}|_2^2)\mathrm{d}t\Big)\leq C< +\infty.
\end{equation}
It follows from \eqref{20} and \eqref{40} that $(\varphi^k, W^k)$ converges to a limit $(\varphi,W)$ in the following strong sense:
\begin{equation}
(\varphi^k, W^k)\rightarrow (\varphi, W)\quad \text{in} \quad L^\infty([0,T_*];H^2(\mathbb{R}^3)).
\end{equation}
Due to the local estimates \eqref{20} and the lower-continuity of norm for weak or weak* convergence, we know that $(\varphi,\psi, W)$ satisfies the estimates \eqref{20}. According to the strong convergence in \eqref{40}, we can  show that $(\varphi,\psi, W)$ is a weak solution of \eqref{4} in the sense of distribution with the regularities:
\begin{equation}
\begin{aligned}
& \varphi \in L^{\infty}\left(\left[0, T^*\right] ; H^3\right), \varphi_t \in L^{\infty}\left(\left[0, T^*\right] ; H^2\right), \phi \in L^{\infty}\left(\left[0, T^*\right] ; H^3\right), \phi_t \in L^{\infty}\left(\left[0, T^*\right] ; H^2\right), \\
&u \in L^{\infty}\left(\left[0, T^*\right] ; H^3\right),  \varphi \nabla^4 u \in L^2\left(\left[0, T^*\right] ; L^2\right),   u_t \in L^{\infty}\left(\left[0, T^*\right] ; H^1\right) \cap L^2\left(\left[0, T^*\right] ; D^2\right) .
\end{aligned}
\end{equation}
Thus the existence of strong solutions is proved.

Step 2. Uniqueness and time-continuity. It can be obtained via the same arguments used in the proof of Lemma \ref{85}. 
\qed

\subsection{Proof of Theorem \ref{38}}\label{89} Now we are ready to prove Theorem \ref{38}.
\begin{proof}
Step 1. Existence of regular solutions. First, for the initial assumption \eqref{66}, it follows from Theorem \ref{29} that there exists a positive time $T_*$  such that the problem \eqref{4} has a unique strong solution $(\varphi, \phi, u)$ in $\left[0, T_*\right] \times \mathbb{R}^3$ satisfying the regularities in \eqref{88}, which means that
\begin{equation*}
\left(\rho^{\frac{\delta_1-1}{2}}, \rho^{\frac{\gamma-1}{2}}\right)=(\varphi, \phi) \in C^1\left(\left(0, T_*\right) \times \mathbb{R}^3\right), \quad \text { and } \quad(u,  \nabla u) \in C\left(\left(0, T_*\right) \times \mathbb{R}^3\right) \text {. }
\end{equation*}

Because of the assumption \eqref{78}, without loss of generality, we assume that $\delta_1\leq 3.$ Noticing that $\rho=\varphi^{\frac{2}{\delta_1-1}}$ and $\frac{2}{\delta_1-1} \geq 1$, it is easy to show that
\begin{equation*}
\rho \in C^1\left(\left(0, T_*\right) \times \mathbb{R}^3\right) .
\end{equation*}
Second, the system $\eqref{4}_2$ for $W=(\phi, u)$ could be written as
\begin{equation}\label{50}
\left\{\begin{split}
&\phi_t+u \cdot \nabla \phi+\frac{\gamma-1}{2} \phi \mathrm{div} u=0, \\
&u_t+u \cdot \nabla u+\frac{A \gamma}{\gamma-1}\nabla \phi^2+\alpha\varphi^2 \triangle u+(\alpha+\beta\varphi^{2m})\varphi^2 \nabla\text{div}u \\
=&\frac{\alpha \delta_1}{\delta_1-1}Q_1(u)\cdot \nabla \varphi^2+\frac{\beta \delta_2}{\delta_2-1}Q_2(u)\cdot \nabla \varphi^{2m+2}.
\end{split}\right.
\end{equation}
Multiplying $\eqref{50}_1$ by $\frac{\partial \rho}{\partial \phi}(t, x)=\frac{2}{\gamma-1} \phi^{\frac{3-\gamma}{\gamma-1}}(t, x) \in C\left(\left(0, T_*\right) \times \mathbb{R}^3\right)$ on both sides, we get the continuity equation in $\eqref{1}_1$ :
\begin{equation*}
\rho_t+u \cdot \nabla \rho+\rho  \text{div} u=0 .
\end{equation*}

Multiplying $\eqref{50}_2$ by $\phi^{\frac{2}{\gamma-1}}=\rho(t, x) \in C^1\left(\left(0, T_*\right) \times \mathbb{R}^3\right)$ on both sides, we get the momentum equations in $\eqref{1}_2$ :
\begin{equation*}
\rho u_t+\rho u \cdot  \nabla u+ \nabla P=\text{div} \mathbb{T}.
\end{equation*}
Finally, recalling that $\rho$ can be represented by the formula
\begin{equation*}
\rho(t, x)=\rho_0(X(0, t, x)) \exp \left(\int_0^t  \text{div} u(s, X(s, t, x)) \mathrm{d} s\right),
\end{equation*}
where $X \in C^1\left(\left[0, T_*\right] \times\left[0, T_*\right] \times \mathbb{R}^3\right)$ is the solution to the initial value problem
\begin{equation*}
\left\{\begin{array}{l}
\frac{\mathrm{d}}{\mathrm{d} s} X(s, t, x)=u(s, X(s, t, x)), \quad 0 \leq s \leq T_*, \\
X(t, t, x)=x, \quad 0 \leq t \leq T_*, \quad x \in \mathbb{R}^3,
\end{array}\right.
\end{equation*}
it is obvious that
\begin{equation*}
\rho(t, x) \geq 0, \quad \forall(t, x) \in\left(0, T_*\right) \times \mathbb{R}^3.
\end{equation*}
That is to say, $(\rho, u)$ satisfies the problem \eqref{1}-\eqref{3} in the sense of distributions, and has the regularities shown in Definition \ref{87}, which means that the Cauchy problem \eqref{1}-\eqref{3}  has a unique regular solution $(\rho, u)$.

Step 2. The smoothness of regular solutions. Now we will show that the regular solution that we obtained in the above step is indeed a classical one in positive time $\left(0, T_*\right]$

Due to the definition of regular solution and the classical Sobolev imbedding theorem, we immediately know that
\begin{equation*}
\left(\rho,  \nabla \rho, \rho_t, u, \nabla u\right) \in C\left(\left[0, T_*\right] \times \mathbb{R}^3\right) .
\end{equation*}
Now we only need to prove that
\begin{equation*}
\left(u_t, \text{div} \mathbb{T} \right) \in C\left(\left[0, T_*\right] \times \mathbb{R}^3\right).
\end{equation*}
Next, we first give the continuity of $u_t$. We differentiate $\eqref{50}_2$ with respect to $t:$
\begin{equation}\label{51}
\begin{split}
&u_{tt}+\alpha(\varphi^2)_t \triangle u+\alpha\varphi^2 \triangle u_t +(\varphi^2(\alpha+\beta\varphi^{2m}))_t \nabla \text{div}u + (\alpha+\beta\varphi^{2m})\varphi^2\nabla \text{div} u_t\\
=&-(u \cdot \nabla u)_t-\frac{A \gamma}{\gamma-1}\nabla (\phi^2)_t+\frac{\alpha \delta_1}{\delta_1-1}Q_1(u_t)\cdot \nabla \varphi^2+\frac{\alpha \delta_1}{\delta_1-1}Q_1(u)\cdot \nabla (\varphi^2)_t\\
&+\frac{\beta \delta_2}{\delta_2-1}Q_2(u_t)\cdot \nabla \varphi^{2m+2}+\frac{\beta \delta_2}{\delta_2-1}Q_2(u)\cdot \nabla (\varphi^{2m+2})_t,
\end{split}
\end{equation}
which along with $\eqref{88},$ easily implies that
\begin{equation}\label{105}
u_{tt}\in L^2([0,T_*]; L^2).
\end{equation}
Applying the operator $\partial_x^\zeta( |\zeta|=2)$ to $\eqref{51},$   multiplying the resulting equations by $ \partial_x^\zeta u_{t}$ and integrating over $\mathbb{R}^3$, we have
\begin{equation}\label{52}
\begin{aligned}
&\frac{1}{2} \frac{\mathrm{d}}{\mathrm{d}t} |\partial_x^\zeta u_t|_2^2+\alpha|\varphi \nabla \partial_x^\zeta u_t|_2^2+\underline{(\alpha+\beta\varphi^{2m})}|\varphi \text{div} \partial_x^\zeta u_t|_2^2\\
=& -\alpha\int \nabla\varphi^2 \nabla \partial_x^\zeta u_t\cdot  \partial_x^\zeta u_t-\int \nabla  ((\alpha+\beta\varphi^{2m}) \varphi^2)\mathrm{div} \partial_x^\zeta u_t\cdot  \partial_x^\zeta u_t\\
&+\int \Big( - \partial_x^\zeta (u\nabla u)_t-\frac{A\gamma}{\gamma-1} \partial_x^\zeta\nabla(\phi^2)_t \Big)\cdot \partial_x^\zeta u_t\\
&+\int\Big(-\alpha \partial_x^\zeta((\varphi^2)_t\triangle u)-\partial_x^\zeta((\varphi^2(\alpha+\beta\varphi^{2m}))_t \nabla \text{div}u)\Big)\cdot \partial_x^\zeta u_t\\
&-\alpha\int\Big(\partial_x^\zeta(\varphi^2 \triangle u_t)-\varphi^2 \triangle \partial_x^\zeta u_t \Big)\cdot \partial_x^\zeta u_t\\
&-\int\Big(\partial_x^\zeta((\alpha+\beta\varphi^{2m})\varphi^2\nabla \text{div} u_t)-(\alpha+\beta\varphi^{2m})\varphi^2 \nabla \text{div}\partial_x^\zeta u_t\Big)\cdot \partial_x^\zeta u_t\triangleq \sum_{i=1}^{8} \text{P}_i.
\end{aligned}
\end{equation}
It follows from Lemmas  \ref{23}-\ref{17}, H\"older's inequality and Young's inequality that
\begin{align}
\text{P}_1&=-\alpha\int \nabla\varphi^2 \nabla \partial_x^\zeta u_t\cdot  \partial_x^\zeta u_t\nonumber\\
&\leq C|\varphi \nabla^3 u_t|_2|\nabla^2 u_t|_2|\nabla \varphi|_\infty \leq \frac{\alpha}{10}|\varphi \nabla^3 u_t|_2^2+C |\nabla^2 u_t|_2^2,\nonumber\\
\text{P}_2&= \int \nabla  ((\alpha+\beta\varphi^{2m}) \varphi^2)\mathrm{div} \partial_x^\zeta u_t\cdot  \partial_x^\zeta u_t \nonumber\\
&\leq C|\nabla \varphi|_\infty(1+|\varphi|_\infty^{2m})|\nabla^2 u_t|_2|\varphi\text{div}\nabla^2 u_t|_2 \leq \frac{\alpha}{10}|\varphi \nabla^2 \mathrm{div} u_t|_2^2+C |\nabla^2 u_t|_2^2, \nonumber\\
\text{P}_3&= -\int \partial_x^\zeta (u\nabla u)_t\cdot \partial_x^\zeta u_t \label{74}\\
&\leq C(|\nabla^3 u|_2|u_t|_\infty|\nabla^2 u_t|_2+|\nabla u|_\infty|\nabla^2 u_t|_2^2+|\nabla u_t|_3|\nabla^2 u|_6|\nabla^2 u_t|_2)\nonumber\\
&\leq C|\nabla^2 u_t|_2^2+C, \nonumber\\
\text{P}_4&=-\int \frac{A\gamma}{\gamma-1} \partial_x^\zeta\nabla(\phi^2)_t\cdot \partial_x^\zeta u_t \nonumber\\
& \leq C(|\nabla \phi|_\infty|\nabla^2\phi_t|_2|\nabla^2 u_t|_2+|\nabla \phi_t|_3|\nabla^2 \phi|_6|\nabla^2 u_t|_2) \leq C|\nabla^2 u_t|_2^2+C,\nonumber\\
\text{P}_5&=-\int\Big(\alpha \partial_x^\zeta((\varphi^2)_t\triangle u)\cdot \partial_x^\zeta u_t\nonumber\\
&\leq C(|\varphi_t|_\infty|\varphi\nabla^4u|_2|\nabla^2 u_t|_2+|\nabla^2\varphi|_3|\varphi_t|_\infty|\nabla^2u|_6|\nabla^2 u_t|_2 \nonumber\\
&+|\nabla \varphi|_\infty|\varphi_t|_\infty|\nabla^3 u|_2|\nabla^2 u_t|_2+|\nabla \varphi_t|_3|\nabla^2 u|_6|\varphi\nabla^3 u_t|_2+|\nabla\varphi_t|_3|\varphi \nabla^3 u|_6|\nabla^2 u_t|_2)\nonumber\\
&\leq C|\nabla^2 u_t|_2^2+\frac{\alpha}{10} |\varphi \nabla^3 u_t|_2^2+C|\varphi\nabla^4 u|_2^2+C,\nonumber\\
\text{P}_6&=- \int\partial_x^\zeta\Big((\varphi^2(\alpha+\beta\varphi^{2m}))_t \nabla \text{div}u\Big)\cdot \partial_x^\zeta u_t\nonumber\\
&\leq C\Big(|\varphi \nabla^3\text{div}u|_2|\nabla^2 u_t|_2(|\varphi_t|_\infty+|\varphi_t|_\infty|\varphi|_\infty^{2m}) +|\nabla^2 u_t|_2|\nabla^2 \text{div}u|_2|\nabla\varphi|_\infty|\varphi_t|_\infty(1+|\varphi|_\infty^{2m})\nonumber\\
&+|\nabla \varphi_t|_3|\varphi\nabla^2\text{div} u|_6|\nabla^2 u_t|_2(1+|\varphi|_\infty^{2m})+|\varphi\nabla^3 u_t|_2|\nabla^2 \text{div}u|_2|\varphi_t|_\infty(1+|\varphi|_\infty^{2m})\Big) \nonumber\\
&\leq C|\nabla^2 u_t|_2^2+\frac{\alpha}{10} |\varphi \nabla^2 \mathrm{div} u_t|_2^2+C|\varphi \nabla^3 \mathrm{div} u|_2^2+C,\nonumber\\
\text{P}_7&=-\alpha\int\Big(\partial_x^\zeta(\varphi^2 \triangle u_t)-\varphi^2 \triangle \partial_x^\zeta u_t \Big)\cdot \partial_x^\zeta u_t\nonumber\\
&\leq C(|\nabla\varphi|_\infty^2|\nabla^2 u_t|_2^2+|\varphi\nabla^3 u_t|_2|\nabla \varphi |_\infty |\nabla^2 u_t|_2+|\varphi\nabla^2 u_t|_6|\nabla^2 \varphi|_3|\varphi|_\infty|\nabla^2 u_t|_2)\nonumber\\
&\leq C|\nabla^2 u_t|_2^2+\frac{\alpha}{10} |\varphi \nabla^3 u_t|_2^2,\nonumber\\
\text{P}_8&=-\int\Big(\partial_x^\zeta((\alpha+\beta\varphi^{2m})\varphi^2\nabla \text{div} u_t)-(\alpha+\beta\varphi^{2m})\varphi^2 \nabla \text{div}\partial_x^\zeta u_t\Big)\cdot \partial_x^\zeta u_t\nonumber\\
&\leq C(1+|\varphi|_\infty^{2m})(|\varphi\nabla^2 \text{div}u_t|_2|\nabla \varphi|_\infty |\nabla^2 u_t|_2+|\nabla^2 \varphi|_3|\varphi\nabla\text{div}u_t|_6|\nabla^2 u_t|_2\nonumber\\
&\quad +|\nabla\varphi|_\infty^2|\nabla \text{div}u_t|_2^2)\nonumber\\
&\leq C|\nabla^2 u_t|_2^2+\frac{\alpha}{10} |\varphi \nabla^2 \mathrm{div} u_t|_2^2.\nonumber
\end{align}
Substituting \eqref{74} into $\eqref{52}$ yields
\begin{equation}\label{54}
\frac{\mathrm{d}}{\mathrm{d} t} |\nabla^2 u_t|_2^2+|\varphi \nabla^3 u_t|_2^2\leq C|\nabla^2 u_t|_2^2+C|\varphi \nabla^4u|_2^2+C.
\end{equation}

Then multiplying both sides of \eqref{54} with $t$ and integrating over $[\tau, t]$ for any $\tau \in(0, t)$, one gets
\begin{equation}\label{55}
t\left|u_t\right|_{D^2}^2+\int_\tau^t s\left|\varphi \nabla^3 u_t\right|_2^2 \mathrm{~d} s \leq C \tau\left|u_t(\tau)\right|_{D^2}^2+C(1+t) .
\end{equation}

According to the definition of the regular solution, we know that
$$
\nabla^2 u_t \in L^2\left(\left[0, T_*\right] ; L^2\right),
$$
which, along with Lemma \ref{106}, implies that there exists a sequence $s_k$ such that
$$
s_k \rightarrow 0, \quad \text { and }\quad  s_k\left|\nabla^2 u_t\left(s_k, \cdot\right)\right|_2^2 \rightarrow 0, \quad \text { as } k \rightarrow+\infty .
$$

Then, letting $\tau=s_k \rightarrow 0$ in \eqref{55}, we have
$$
t\left|u_t\right|_{D^2}^2+\int_0^t s\left|\varphi \nabla^3 u_t\right|_2^2 \mathrm{d} s  \leq C(1+t) \leq C.
$$

So we have
\begin{equation}\label{56}
t^{\frac{1}{2}} u_t \in L^{\infty}\left(\left[0, T_*\right] ; H^2\right).
\end{equation}

Based on the classical Sobolev imbedding theorem
\begin{equation}\label{57}
L^{\infty}\left([0, T] ; H^1\right) \cap W^{1,2}\left([0, T] ; H^{-1}\right) \hookrightarrow C\left([0, T] ; L^q\right),
\end{equation}

for any $q \in(3,6)$, from \eqref{105} and \eqref{56}, we have
$$
t u_t \in C\left(\left[0, T_*\right] ; W^{1,4}\right),
$$
which implies that $u_t \in C\left(\left(0, T_*\right] \times \mathbb{R}^3\right)$.
Finally, we consider the continuity of $\operatorname{div} \mathbb{T}$. Denote $\mathbb{N}= \varphi^2 \triangle u+\varphi^2(\alpha+\beta\varphi^{2m})\nabla\text{div}u - Q_1(u)\cdot \nabla \varphi^2  -Q_2(u)\cdot \nabla\varphi^{2m+2} $. Based on \eqref{88} and \eqref{56}, we have
$$
t \mathbb{N} \in L^{\infty}\left(0, T_* ; H^2\right) .
$$
From $\mathbb{N}_t \in L^2\left(0, T_* ; L^2\right)$ and \eqref{57}, we obtain $t \mathbb{N} \in C\left(\left[0, T_*\right] ; W^{1,4}\right)$, which implies that $\mathbb{N} \in C\left(\left(0, T_*\right] \times \mathbb{R}^3\right)$. Since $\rho \in C\left(\left[0, T_*\right] \times \mathbb{R}^3\right)$ and $\operatorname{div} \mathbb{T}=\rho \mathbb{N}$, then we obtain the desired conclusion.
%From Gronwall's inequality and \eqref{81}, we have for $0\leq t\leq T_*$
%\begin{equation}
%\begin{aligned}
%|u_{xt}|_2^2+\int_0^t |\varphi u_{xxt}|_2^2 \mathrm{d}s &\leq (|u_{xt}|_2^2(0)+Ct)\exp(Ct)\leq  C c_0^2.
%\end{aligned}
%\end{equation}
%Hence,
%\begin{equation}\label{106}
%u_t \in L^{\infty}\left(\left[0, T_*\right] ; H^1\right) .
%\end{equation}
%Based on the classical Sobolev imbedding theorem
%\begin{equation}
%L^{\infty}\left([0, T] ; L^2\right) \cap W^{1,2}\left([0, T] ; H^{-2}\right) \hookrightarrow C\left([0, T] ; L^q\right),
%\end{equation}
%for any $q \in(3,6)$, from \eqref{105} and \eqref{106}, we have
%\begin{equation}
% u_t \in C\left(\left[0, T_*\right] ; W^{1,3}\right),
%\end{equation}
%which implies that $u_t \in C\left(\left(0, T_*\right] \times \mathbb{R}\right)$.
%It is easy to use the same method to get that
%\begin{equation}
%(\rho^\delta u_x)_x\in C([0,T_*]\times \mathbb{R}).
%\end{equation}
\end{proof}

{\textbf{Acknowledgement:}} The authors sincerely appreciates Professor  Shengguo Zhu for his helpful
suggestions and discussions on the problem solved in this paper.
The research was supported in part by  National Natural Science Foundation of China under Grants 12371221, 12161141004 and 11831011. This work was also partially supported by the Fundamental Research Funds for the Central Universities and Shanghai Frontiers Science Center of Modern Analysis.

{\bf Conflict of Interest:} The authors declare that they have no conflict of interest.

\end{document}